\newlength{\aufzleft}
\newenvironment{aufz}{\begin{list}{}{\setlength{\listparindent}{0pt}\setlength{\itemsep}{\topsep}\setlength{\labelwidth}{3.2ex}\setlength{\aufzleft}{\labelsep}\addtolength{\aufzleft}{\labelwidth}\setlength{\leftmargin}{\aufzleft}}}{\end{list}}	
\newenvironment{equi}{\begin{list}{}{\setlength{\listparindent}{0pt}\setlength{\itemsep}{\topsep}\setlength{\labelwidth}{4.1ex}\setlength{\aufzleft}{\labelsep}\addtolength{\aufzleft}{\labelwidth}\setlength{\leftmargin}{\aufzleft}}}{\end{list}}	
\newtheoremstyle{bracket}{1ex}{2ex}{\rm}{}{\bfseries}{}{0.8em}{\thmnumber{(#2)}}
\newtheoremstyle{example}{1ex}{2ex}{\rm}{}{\bfseries}{}{0.8em}{\thmnumber{(#2)}\thmname{ #1}}
\newtheoremstyle{thm}{1ex}{2ex}{\itshape}{}{\bfseries}{}{0.9em}{\thmnumber{(#2)}\thmname{ #1}\thmnote{ (#3)}}
\theoremstyle{bracket}
\newtheorem{no}{}[section]
\theoremstyle{example}
\newtheorem{exas}[no]{Examples}
\newtheorem{qu}[no]{Question}
\newtheorem{qus}[no]{Questions}
\theoremstyle{thm}
\newtheorem{lemma}[no]{Lemma}
\newtheorem{prop}[no]{Proposition}
\newtheorem{cor}[no]{Corollary}
\DeclareMathOperator{\ass}{Ass}
\DeclareMathOperator{\var}{Var}
\DeclareMathOperator{\spec}{Spec}
\DeclareMathOperator{\nil}{Nil}
\newcommand{\hm}[3]{{\rm Hom}_{#1}(#2,#3)}
\newcommand{\dfgl}{\mathrel{\mathop:}=}
\newcommand{\Id}{{\rm Id}}
\newcommand{\res}{\!\upharpoonright}
\newcommand{\N}{\mathbbm{N}}
\newcommand{\Z}{\mathbbm{Z}}
\newcommand{\Q}{\mathbbm{Q}}
\newcommand{\assf}{\ass^{\rm f}}
\newcommand{\ia}{\mathfrak{a}}
\newcommand{\ib}{\mathfrak{b}}
\newcommand{\ic}{\mathfrak{c}}
\newcommand{\id}{\mathfrak{d}}
\newcommand{\ip}{\mathfrak{p}}
\newcommand{\iq}{\mathfrak{q}}
\newcommand{\im}{\mathfrak{m}}
\newcommand{\inn}{\mathfrak{n}}
\newcommand{\catmod}{{\sf Mod}}
\newcommand{\ilim}{\varinjlim}
\newcommand{\ext}[4]{{\rm Ext}^{#1}_{#2}(#3,#4)}
\newcommand{\sq}{\hskip1pt\raisebox{.225ex}{\rule{.8ex}{.8ex}\hskip1pt}}
\newcommand{\snf}{\renewcommand{\thefootnote}{*}\footnotetext{PHQ was partially supported by NAFOSTED (Vietnam), grant no. 101.04-2014.25. An earlier version of this article was cited as {\it Bad behaviour of injective modules.}}}
\begin{document}

\title{Injective modules and torsion functors\protect\snf}
\author{Pham Hung Quy}
\author{Fred Rohrer}
\address{Department of Mathematics, FPT University, 8 Ton That Thuyet, 10307 Hanoi, Vietnam}
\email{quyph@fpt.edu.vn}
\address{Grosse Grof 9, 9470 Buchs, Switzerland}
\email{fredrohrer@math.ch}
\subjclass[2010]{Primary 13C11; Secondary 13D45}
\keywords{ITI, non-noetherian ring, injective module, torsion functor, local cohomology, weakly proregular ideal}

\begin{abstract}
A commutative ring is said to have ITI with respect to an ideal $\ia$ if the $\ia$-torsion functor preserves injectivity of modules. Classes of rings with ITI or without ITI with respect to certain sets of ideals are identified. Behaviour of ITI under formation of rings of fractions, tensor products and idealisation is studied. Applications to local cohomology over non-noetherian rings are given.
\end{abstract}

\maketitle


\section*{Introduction}

Let $R$ be a ring\footnote{Throughout the following rings are understood to be commutative.}. It is an interesting phenomenon that the behaviour of injective $R$-modules is related to noetherianness of $R$. For example, by results of Bass, Matlis and Papp (\cite[3.46; 3.48]{lam2}) the following statements are both equivalent to $R$ being noetherian:\linebreak (i) Direct sums of injective $R$-modules are injective; (ii) Every injective $R$-module is a direct sum of indecomposable injective $R$-modules.

In this article, we investigate a further property of the class of injective $R$-modules, dependent on an ideal $\ia\subseteq R$, that is shared by all noetherian rings without characterising them: We say that $R$ has {\it ITI with respect to $\ia$} if the $\ia$-torsion submodule of any injective $R$-module is again injective\footnote{ITI stands for ``injective torsion of injectives''.}. It is well-known that ITI with respect to $\ia$ implies that every $\ia$-torsion $R$-module has an injective resolution whose components are $\ia$-torsion modules. We show below (\ref{1.10}) that these two properties are in fact equivalent.

Our interest in ITI properties stems from the study of the theory of local cohomology (i.e., the right derived cohomological functor of the $\ia$-torsion functor). Usually, local cohomology is developed over a noetherian ring (e.g., \cite{bs}), although its creators were obviously interested in a more general theory -- see \cite{sga2}. While some deeper theorems about local cohomology might indeed rely on noetherianness, setting up its basic machinery does not need such a hypothesis at all. The ITI property with respect to the supporting ideal serves as a convenient substitute for noetherianness, and thus identifying non-noetherian rings with ITI is a first step in extending applicability of local cohomology beyond noetherian rings, in accordance with natural demands from modern algebraic geometry and homological algebra.\footnote{Concerning ``theorems that might indeed rely on noetherianness'', let us mention that in \ref{3.80} we exhibit rings with ITI with respect to every ideal but without Grothendieck vanishing.} Besides the approach via ITI, there is also the extension of local cohomology to non-noetherian rings via the notion of weak proregularity. This originates in \cite{sga2}, was studied by several authors (\cite{lipman}, \cite{yekutieli}, \cite{schenzel}), and is quite successful from the point of view of applications. However -- and in contrast to ITI -- it applies only to supporting ideals of finite type, and when working in a non-noetherian setting, it seems artificial to restrict ones attention a priori to ideals of finite type.

\medskip

The first section contains mostly positive results. Besides giving a new proof of the fact that noetherian rings have ITI with respect to every ideal (\ref{1.150}), we prove that absolutely flat rings have ITI with respect to every ideal (\ref{1.120}) and that $1$-dimensional local domains (e.g. valuation rings of height $1$) have ITI with respect to ideals of finite type (\ref{1.180}). We also show that ITI properties are preserved by formation of rings of fractions (\ref{1.80}) and discuss further necessary or sufficient conditions for ITI.

Unfortunately, there are a lot of nice rings without ITI, even with respect to some principal ideals. In the second section, we provide several such examples (\ref{2.20}, \ref{2.30}, \ref{2.90}) and also show that ITI may be lost by natural constructions such as idealisation (\ref{2.100}) or base change (\ref{2.50}), even when performed on noetherian rings.

The goal of the third section is twofold. First, we show that ITI with respect to an ideal $\ia$ of finite type is strictly stronger than weak proregularity of $\ia$ (\ref{wpr10}, \ref{wpr20}). Second, to round off, we sketch how to swap ITI for noetherianness in some basic results on local cohomology. 

\medskip

Our study of rings with or without ITI left us with at least as much questions as we found answers; some of them are pointed out in the following. We consider this work as a starting point in the search for non-noetherian rings with ITI and hope this naturally arising and interesting class of rings will be further studied and better understood.

\medskip

General notation and terminology follows Bourbaki's {\it \'El\'ements de math\'ematique;} concerning local cohomology we follow Brodmann and Sharp (\cite{bs}).


\section{Rings with ITI}

Let $R$ be a ring and let $\ia\subseteq R$ be an ideal. We choose for every $R$-module $M$ an injective hull $e_R(M)\colon M\rightarrowtail E_R(M)$ with $e_R(M)=\Id_M$ if $M$ is injective; for basics on injective hulls we refer the reader to \cite[X.1.9]{a}. The $\ia$-torsion functor $\Gamma_{\ia}$ is defined as the subfunctor of the identity functor on the category of $R$-modules with $\Gamma_{\ia}(M)=\bigcup_{n\in\N}(0:_M\ia^n)$ for each $R$-module $M$. For basics on torsion functors we refer the reader to \cite{bs}, but not without a word of warning that over non-noetherian rings these functors may behave differently than over noetherian ones -- see \cite{r-torsion}.\footnote{Although some of the following can be expressed in the language of torsion theories we avoid this mainly because $\Gamma_{\ia}$ is not necessarily a radical if $\ia$ is not of finite type -- see \ref{2.120}.}

\begin{prop}\label{1.10}
The following statements are equivalent:
\begin{equi}
\item[(i)] $\Gamma_{\ia}(M)$ is injective for any injective $R$-module $M$;
\item[(ii)] Every $\ia$-torsion $R$-module has an injective resolution whose components are $\ia$-torsion modules;
\item[(iii)] $E_R(\Gamma_{\ia}(E_R(M)))=\Gamma_{\ia}(E_R(M))$ for every $R$-module $M$;
\item[(iv)] $\Gamma_{\ia}(E_R(\Gamma_{\ia}(M)))=E_R(\Gamma_{\ia}(M))$ for every $R$-module $M$.
\end{equi}
\end{prop}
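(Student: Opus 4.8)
The plan is to establish the single cycle of implications $(\mathrm{i})\Rightarrow(\mathrm{ii})\Rightarrow(\mathrm{iv})\Rightarrow(\mathrm{iii})\Rightarrow(\mathrm{i})$. Throughout I would use only elementary facts: $\Gamma_{\ia}$ is a subfunctor of the identity functor, so $\Gamma_{\ia}(N)=N\cap\Gamma_{\ia}(M)$ whenever $N\subseteq M$, a module $N$ is $\ia$-torsion if and only if $N=\Gamma_{\ia}(N)$, and $\Gamma_{\ia}(M)$ is the largest $\ia$-torsion submodule of $M$; submodules and quotients of $\ia$-torsion modules are $\ia$-torsion; by our normalisation a module $Q$ is injective if and only if $E_R(Q)=Q$; and every module $N$ admits an essential monomorphism $e_R(N)\colon N\rightarrowtail E_R(N)$ into an injective module which, by essentiality, embeds over $N$ into any injective module containing $N$.

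For $(\mathrm{i})\Rightarrow(\mathrm{ii})$ I would construct an injective resolution of a given $\ia$-torsion module $N$ by the usual recursion, but with each injective hull replaced by its $\ia$-torsion part. Set $N^{0}\dfgl N$, and having produced an $\ia$-torsion module $N^{k}$, put $E^{k}\dfgl\Gamma_{\ia}(E_{R}(N^{k}))$ and $N^{k+1}\dfgl E^{k}/N^{k}$. Then $N^{k}=\Gamma_{\ia}(N^{k})\subseteq\Gamma_{\ia}(E_{R}(N^{k}))=E^{k}$; the module $E^{k}$ is $\ia$-torsion by construction and injective by (i); and $N^{k+1}$, a quotient of $E^{k}$, is again $\ia$-torsion, so the recursion may be continued. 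Splicing the short exact sequences $0\to N^{k}\to E^{k}\to N^{k+1}\to 0$ yields an injective resolution of $N$ with $\ia$-torsion components.

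For $(\mathrm{ii})\Rightarrow(\mathrm{iv})$, fix an $R$-module $M$ and set $N\dfgl\Gamma_{\ia}(M)$; this is $\ia$-torsion, so by (ii) it admits an injective resolution $0\to N\to E^{0}\to\cdots$ with $E^{0}$ an $\ia$-torsion injective module. Since $E^{0}$ is injective, the monomorphism $N\rightarrowtail E^{0}$ extends along $e_R(N)$ to a map $E_{R}(N)\to E^{0}$ whose kernel meets $N$ trivially and is therefore zero; thus $E_{R}(N)$ embeds into the $\ia$-torsion module $E^{0}$ and is itself $\ia$-torsion, which is exactly $(\mathrm{iv})$. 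For $(\mathrm{iv})\Rightarrow(\mathrm{iii})$, fix $M$ and put $N\dfgl\Gamma_{\ia}(E_{R}(M))$; it suffices to show $N$ is injective. Applying $(\mathrm{iv})$ to $E_{R}(M)$ in place of $M$ shows that $E_{R}(N)$ is $\ia$-torsion. On the other hand $N\subseteq E_{R}(M)$ and $E_{R}(M)$ is injective, so $E_{R}(N)$ may be identified with a submodule of $E_{R}(M)$ containing $N$; being $\ia$-torsion it lies inside $\Gamma_{\ia}(E_{R}(M))=N$, whence $E_{R}(N)=N$ and $N$ is injective.

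Finally $(\mathrm{iii})\Rightarrow(\mathrm{i})$ is immediate from the normalisation: if $M$ is injective then $E_{R}(M)=M$, and $(\mathrm{iii})$ applied to $M$ says precisely that $\Gamma_{\ia}(M)$ equals its own injective hull, hence is injective. I expect the only genuinely delicate point to be $(\mathrm{iv})\Rightarrow(\mathrm{iii})$: one must apply the hypothesis to $E_{R}(M)$ rather than to $M$, and then invoke the maximality of the injective hull to squeeze $E_{R}(\Gamma_{\ia}(E_{R}(M)))$ back inside $\Gamma_{\ia}(E_{R}(M))$. Elsewhere the main thing to keep track of is the systematic use of $E_{R}(Q)=Q$ for injective $Q$, which is what makes \emph{injective} interchangeable with \emph{equal to its own injective hull} in $(\mathrm{iii})$ and $(\mathrm{iv})$.
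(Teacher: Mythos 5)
Your proof is correct, but it is organised differently from the paper's. The paper declares the equivalences (i)$\Leftrightarrow$(iii) and (ii)$\Leftrightarrow$(iv) to be immediate (the normalisation $e_R(M)=\Id_M$ for injective $M$ in one case, the standard splice-the-torsion-parts-of-hulls resolution in the other) and then bridges the two pairs by proving (iii)$\Rightarrow$(iv) and (iv)$\Rightarrow$(iii) separately, each by a sandwich argument: one embeds the relevant module between two copies of the same thing, applies $E_R$ resp.\ $\Gamma_{\ia}$ to the chain, and identifies the middle term using the hypothesis. You instead run a single cycle (i)$\Rightarrow$(ii)$\Rightarrow$(iv)$\Rightarrow$(iii)$\Rightarrow$(i), which spares you one direction of the bridge entirely and forces you to write out explicitly the recursion $N^{k+1}=\Gamma_{\ia}(E_R(N^k))/N^k$ that the paper leaves implicit; your (iv)$\Rightarrow$(iii) step is in substance the same as the paper's (it uses the same identification of $E_R(N)$ inside the ambient injective $E_R(M)$ and the maximality of $\Gamma_{\ia}(E_R(M))$ among $\ia$-torsion submodules), just phrased as ``$N$ is its own hull'' rather than as a two-sided inclusion collapsing. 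The trade-off is that the paper's version is shorter on the page but hides the resolution construction behind the word ``immediate'', whereas yours is fully self-contained and makes that construction, which the introduction of the paper calls well known, explicit. All individual steps check out, including the two points that genuinely need care: the kernel-meets-$N$-trivially argument for embedding hulls into ambient injectives, and applying (iv) to $E_R(M)$ rather than to $M$ in the step (iv)$\Rightarrow$(iii).
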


\begin{proof}
The equivalences ``(i)$\Leftrightarrow$(iii)'' and ``(ii)$\Leftrightarrow$(iv)'' are immediate.

``(iii)$\Rightarrow$(iv)'': Let $M$ be an $R$-module. Then, $$\Gamma_{\ia}(M)\subseteq\Gamma_{\ia}(E_R(\Gamma_{\ia}(M)))\subseteq E_R(\Gamma_{\ia}(M)).$$ Taking injective hulls yields $$E_R(\Gamma_{\ia}(M))\subseteq E_R(\Gamma_{\ia}(E_R(\Gamma_{\ia}(M))))\subseteq E_R(\Gamma_{\ia}(M)).$$ As the $R$-module in the middle equals $\Gamma_{\ia}(E_R(\Gamma_{\ia}(M)))$ by (iii) we get the desired equality.

``(iv)$\Rightarrow$(iii)'': Let $M$ be an $R$-module. Then, $$\Gamma_{\ia}(E_R(M))\subseteq E_R(\Gamma_{\ia}(E_R(M)))\subseteq E_R(M).$$ Taking $\ia$-torsion submodules yields $$\Gamma_{\ia}(E_R(M))\subseteq\Gamma_{\ia}(E_R(\Gamma_{\ia}(E_R(M))))\subseteq\Gamma_{\ia}(E_R(M)).$$ As the $R$-module in the middle equals $E_R(\Gamma_{\ia}(E_R(M)))$ by (iv) we get the desired equality.
\end{proof}

We say that $R$ has {\it ITI with respect to $\ia$} if the statements (i)--(iv) in \ref{1.10} hold. It is well-known that noetherian rings have ITI with respect to every ideal (\cite[2.1.4]{bs}); we give a new proof of this fact below (\ref{1.150}). But let us begin by exhibiting first examples -- albeit silly ones -- of possibly non-noetherian rings with ITI properties.

\begin{exas}\label{1.20}
A) Every ring has ITI with respect to nilpotent ideals.

\smallskip

B) If $R$ is a $0$-dimensional local ring, then proper ideals of finite type are nilpotent, and hence $R$ has ITI with respect to ideals of finite type.

\smallskip

C) If $K$ is a field, then $K[(X_i)_{i\in\N}]/\langle X_iX_j\mid i,j\in\N\rangle_{K[(X_i)_{i\in\N}]}$ is a non-noetherian $0$-dimensional local ring whose proper ideals are all nilpotent, hence it has ITI with respect to every ideal.\hfill$\bullet$
\end{exas}

Next we look at a necessary condition for $R$ to have ITI with respect to $\ia$ which proves to be very useful in producing rings without ITI later on. Moreover, the question about its sufficiency is related to the question whether ITI properties are preserved along surjective epimorphisms.

\begin{prop}\label{1.30}
If $R$ has ITI with respect to $\ia$, then $E_R(R/\ia)$ is an $\ia$-torsion module.
\end{prop}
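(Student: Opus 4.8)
The plan is to exploit the characterisation of ITI given in \ref{1.10}, specifically condition (iv), applied to the module $M=R/\ia$. The first observation is that $R/\ia$ is itself an $\ia$-torsion module: since $\ia\cdot(R/\ia)=0$ we have $(0:_{R/\ia}\ia)=R/\ia$, hence $\Gamma_{\ia}(R/\ia)=R/\ia$. Feeding this into condition (iv) of \ref{1.10}, which is available because $R$ has ITI with respect to $\ia$, yields
$$\Gamma_{\ia}(E_R(R/\ia))=\Gamma_{\ia}(E_R(\Gamma_{\ia}(R/\ia)))=E_R(\Gamma_{\ia}(R/\ia))=E_R(R/\ia),$$
which is precisely the assertion that $E_R(R/\ia)$ is an $\ia$-torsion module.

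If one prefers to argue directly from condition (i) instead, I would proceed as follows. By definition of injective hull, the inclusion $R/\ia\hookrightarrow E_R(R/\ia)$ is essential. As $R/\ia$ is $\ia$-torsion it is contained in $\Gamma_{\ia}(E_R(R/\ia))$, so there is a chain $R/\ia\subseteq\Gamma_{\ia}(E_R(R/\ia))\subseteq E_R(R/\ia)$; any module sandwiched in this way is still an essential extension of $R/\ia$, since a nonzero submodule of the middle term is a nonzero submodule of $E_R(R/\ia)$ and hence meets $R/\ia$. By ITI (condition (i)), $\Gamma_{\ia}(E_R(R/\ia))$ is injective, so it is an injective essential extension of $R/\ia$, i.e.\ an injective hull of $R/\ia$ contained in $E_R(R/\ia)$. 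Being injective it is a direct summand of $E_R(R/\ia)$, and essentiality forces the complementary summand to vanish, whence $\Gamma_{\ia}(E_R(R/\ia))=E_R(R/\ia)$.

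I do not expect a genuine obstacle here: the entire content is the elementary fact that an injective module which is an essential extension of $M$ must coincide with $E_R(M)$, combined with the trivial remark $\Gamma_{\ia}(R/\ia)=R/\ia$. The only point worth a moment's care is the direction of the chain of inclusions and the invocation of the minimality (uniqueness) property of the injective hull; with condition (iv) of \ref{1.10} already at hand, even that is bypassed and the proof is immediate.
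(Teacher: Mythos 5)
Your first argument is exactly the paper's proof: apply condition (iv) of \ref{1.10} to $M=R/\ia$ and use $\Gamma_{\ia}(R/\ia)=R/\ia$. The alternative argument via condition (i) is also correct but unnecessary; the proposal matches the paper's approach.
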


\begin{proof}
Immediately from \ref{1.10} (iv), since $R/\ia$ is an $\ia$-torsion module.
\end{proof}

\begin{lemma}\label{1.40}
If $\ia,\ib\subseteq R$ are ideals with $\ib\subseteq\ia$ and $M$ is an $R$-module with $\ib M=0$ such that $E_R(M)$ is an $\ia$-torsion module, then $E_{R/\ib}(M)$ is an $\ia/\ib$-torsion module.
\end{lemma}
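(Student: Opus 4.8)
The plan is to relate injective hulls over $R$ to injective hulls over the quotient ring $R/\ib$ via the standard change-of-rings adjunction. Recall that for a ring surjection $R \twoheadrightarrow R/\ib$ and an $R/\ib$-module $N$ (viewed as an $R$-module with $\ib N = 0$), the functor $\hm{R}{R/\ib}{-}$ is right adjoint to restriction of scalars, and it sends injective $R$-modules to injective $R/\ib$-modules. Moreover $\hm{R}{R/\ib}{-}$ applied to an $R$-module $L$ computes $(0:_L \ib) = \Gamma_{\langle\ib\rangle}$-type submodule, i.e. the largest $R/\ib$-submodule of $L$. The key classical fact I will use is that if $M$ is an $R/\ib$-module, then $(0 :_{E_R(M)} \ib)$ is an injective hull of $M$ over $R/\ib$; that is, $E_{R/\ib}(M) \cong (0:_{E_R(M)}\ib)$ as $R/\ib$-modules, with the embedding $M \rightarrowtail E_R(M)$ factoring through it. This is because $M \subseteq (0:_{E_R(M)}\ib)$ is still an essential extension (essentiality is inherited by submodules containing $M$), the module $(0:_{E_R(M)}\ib)$ is injective over $R/\ib$ by the adjunction, and essential plus injective forces it to be an injective hull.

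**Carrying out the argument.** Given the hypotheses — $\ib \subseteq \ia$, $\ib M = 0$, and $E_R(M)$ an $\ia$-torsion $R$-module — I first invoke the identification $E_{R/\ib}(M) \cong (0 :_{E_R(M)} \ib)$ just described, so it suffices to show that this $R/\ib$-submodule of $E_R(M)$ is $\ia/\ib$-torsion. But $(0:_{E_R(M)}\ib)$ is a submodule of $E_R(M)$, and a submodule of an $\ia$-torsion $R$-module is again $\ia$-torsion; so every element $x$ of it is annihilated by some power $\ia^n$. Since the $R/\ib$-module structure on $x$ is the one induced from $R$, and $(\ia/\ib)^n = (\ia^n + \ib)/\ib$ is the image of $\ia^n$, we get $(\ia/\ib)^n x = 0$ in $R/\ib$. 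Hence $(0:_{E_R(M)}\ib)$ is $\ia/\ib$-torsion, which is exactly the claim $\Gamma_{\ia/\ib}(E_{R/\ib}(M)) = E_{R/\ib}(M)$.

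**Where the difficulty lies.** The only genuinely non-formal ingredient is the identification of $E_{R/\ib}(M)$ with $(0:_{E_R(M)}\ib)$, and within that, the claim that the inclusion $M \subseteq (0:_{E_R(M)}\ib)$ is an essential extension of $R/\ib$-modules. This follows because $M \rightarrowtail E_R(M)$ is essential over $R$: any nonzero $R$-submodule of $(0:_{E_R(M)}\ib)$ is in particular a nonzero $R$-submodule of $E_R(M)$, hence meets $M$ nontrivially, and $R$-submodules of a module killed by $\ib$ are the same as $R/\ib$-submodules. The injectivity of $(0:_{E_R(M)}\ib)$ over $R/\ib$ is the Baer-criterion / adjunction fact that $\hm{R}{R/\ib}{E}$ is injective over $R/\ib$ whenever $E$ is injective over $R$, applied to $E = E_R(M)$. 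I would present these two sub-points briefly (they are folklore, e.g. derivable from \cite[X]{a}) and then the torsion bookkeeping in the previous paragraph is immediate. I expect no real obstacle beyond stating this change-of-rings lemma cleanly.
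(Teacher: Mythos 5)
Your proof is correct and follows the same route as the paper, which cites the canonical isomorphism $\hm{R}{R/\ib}{E_R(M)}\cong E_{R/\ib}(M)$ (i.e.\ your identification $E_{R/\ib}(M)\cong(0:_{E_R(M)}\ib)$) and calls the rest straightforward. You merely spell out the proof of that identification and the torsion bookkeeping explicitly.
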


\begin{proof}
Straightforward on use of the canonical isomorphism of $R/\ib$-modules\linebreak $\hm{R}{R/\ib}{E_R(M)}\cong E_{R/\ib}(M)$ (\cite[10.1.16]{bs}).
\end{proof}

\begin{qu}\label{1.50}
It would be useful to know whether the converse of \ref{1.30} holds, i.e.:
\begin{aufz}
\item[(A)] {\it Suppose $E_R(R/\ia)$ is an $\ia$-torsion module. Does then $R$ have ITI with respect to $\ia$?}
\end{aufz}
For example, if $\ib\subseteq R$ is an ideal with $\ib\subseteq\ia$ and $E_R(R/\ia)$ is an $\ia$-torsion module, then $E_{R/\ib}((R/\ib)/(\ia/\ib))$ is an $\ia/\ib$-torsion module by \ref{1.40}. So, if Question (A) could be positively answered and $R$ has ITI with respect to $\ia$, then $R/\ib$ would have ITI with respect to $\ia/\ib$. In particular, ITI properties would be preserved along surjective epimorphisms.\hfill$\bullet$
\end{qu}

While we do not know whether or not ITI properties are preserved along surjective epimorphisms (let alone arbitrary epimorphisms), we show now that they are preserved along {\it flat} epimorphisms, hence in particular by formation of rings of fractions.\footnote{This result could raise hope to obtain non-noetherian rings with ITI by considering flat epimorphisms with noetherian source. Alas, Lazard showed that flat epimorphisms with noetherian source have noetherian target (\cite[IV.2.3]{lazard}).}

\begin{prop}\label{1.60}
Let $h\colon R\rightarrow S$ be a flat epimorphism of rings. If $R$ has ITI with respect to $\ia$, then $S$ has ITI with respect to $\ia S$.
\end{prop}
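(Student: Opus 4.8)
The goal is to show that a flat epimorphism $h\colon R\to S$ transfers ITI. The natural strategy is to use the criterion \ref{1.10}(i): take an injective $S$-module $N$ and prove that $\Gamma_{\ia S}(N)$ is an injective $S$-module. The two basic facts I want to exploit are: (1) for a flat epimorphism, an $S$-module $N$ is injective over $S$ if and only if it is injective over $R$ (via restriction of scalars along $h$) — this is a known consequence of flatness plus the epimorphism property, since $S\otimes_R(-)$ is an exact left adjoint to restriction that is moreover a localisation-type functor, so restriction preserves injectives, and conversely any $S$-module injective over $R$ is injective over $S$ because $\hm{S}{-}{N}\cong\hm{R}{-}{N}$ on $S$-modules when $h$ is an epimorphism; and (2) the $\ia S$-torsion functor on $S$-modules agrees, after restriction along $h$, with the $\ia$-torsion functor on the underlying $R$-module, i.e. $\Gamma_{\ia S}(N)=\Gamma_{\ia}(N)$ as submodules of $N$ for any $S$-module $N$. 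Fact (2) holds because $h$ is an epimorphism: the ideal $\ia S$ is generated by the image $h(\ia)$, so $(0:_N(\ia S)^n)=(0:_N h(\ia)^n)=(0:_N\ia^n)$, where the last equality uses that $S$ acts on $N$ through $h$.

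**Key steps in order.** First I would record Fact (2): for any $S$-module $N$, $\Gamma_{\ia S}(N)$ and $\Gamma_{\ia}(h_*N)$ coincide as subsets of $N$, hence the $S$-submodule $\Gamma_{\ia S}(N)$ of $N$ has underlying $R$-module $\Gamma_{\ia}(h_*N)$. Second, I would invoke Fact (1) in the easy direction: since $h$ is a flat epimorphism, restriction of scalars $h_*$ sends injective $S$-modules to injective $R$-modules (the left adjoint $S\otimes_R-$ is exact). So if $N$ is an injective $S$-module, $h_*N$ is an injective $R$-module. Third, apply the ITI hypothesis on $R$: $\Gamma_{\ia}(h_*N)$ is an injective $R$-module. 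Fourth, observe this injective $R$-module carries a compatible $S$-module structure (namely the one from $\Gamma_{\ia S}(N)\subseteq N$), and invoke Fact (1) in the converse direction: an $S$-module whose underlying $R$-module is $R$-injective is $S$-injective, because $h$ being an epimorphism gives $\hm{S}{L}{\Gamma_{\ia S}(N)}\cong\hm{R}{L}{\Gamma_{\ia S}(N)}$ naturally in the $S$-module $L$, and the right-hand functor is exact. Therefore $\Gamma_{\ia S}(N)$ is an injective $S$-module, which is exactly \ref{1.10}(i) for $(S,\ia S)$.

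**Main obstacle.** The routine steps are Fact (2) and the manipulations with $\Gamma$; the real content is Fact (1), the characterisation of $S$-injectivity via $R$-injectivity for a flat epimorphism. I expect the cleanest citation is to the general principle that for a ring epimorphism $h\colon R\to S$ the restriction functor $h_*$ is fully faithful with essential image the $S$-modules, and that it has the exact left adjoint $S\otimes_R-$ precisely when $h$ is flat; combining these, $h_*$ both preserves and reflects injectivity. If a direct reference is not at hand I would prove the two halves by hand as sketched above (adjunction plus exactness for ``preserves'', the isomorphism $\hm{S}{-}{N}\cong\hm{R}{-}{N}$ on $S$-modules for ``reflects''); neither is long. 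A secondary point to get right is that the $S$-module structure on $\Gamma_{\ia}(h_*N)$ obtained from step three is genuinely the restriction of the given $S$-structure on $N$ — but this is immediate from Fact (2), since $\Gamma_{\ia S}(N)$ is an $S$-submodule of $N$ and its underlying set is $\Gamma_{\ia}(h_*N)$, so no independent choice of structure is being made.
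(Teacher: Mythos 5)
Your proof is correct, and it follows the same overall skeleton as the paper's (restrict the injective $S$-module to $R$ along the flat map, apply the ITI hypothesis over $R$, then transfer injectivity of the torsion submodule back to $S$, using throughout that $\Gamma_{\ia S}(N)$ and $\Gamma_{\ia}(N\res_R)$ coincide). The difference lies entirely in the last transfer step. The paper verifies $S$-injectivity of $\Gamma_{\ia S}(I)$ via Baer's criterion: a morphism $\ib\rightarrow\Gamma_{\ia S}(I)$ from an ideal $\ib\subseteq S$ is restricted to $\ib\cap R$, extended there by $R$-injectivity, and then the extension is seen to work on all of $\ib$ because $\ib=(\ib\cap R)S$ for a flat epimorphism (Lazard, \cite[IV.2.1]{lazard}). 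You instead invoke the fact that restriction of scalars along a ring epimorphism is fully faithful, so that $\hm{S}{L}{M}=\hm{R}{L}{M}$ for $S$-modules $L,M$, whence an $S$-module that is $R$-injective is automatically $S$-injective. Both routes are sound; both ultimately lean on Lazard-type facts about epimorphisms. Your version is somewhat more conceptual and cleanly separates the roles of the two hypotheses (flatness is used only for ``restriction preserves injectives'', the epimorphism property only for ``restriction reflects injectives''), whereas the paper's version stays at the concrete level of ideals and Baer's criterion and needs only one explicit citation beyond \cite[3.6A]{lam2}. If you write yours up, you should either cite the full-faithfulness characterisation of ring epimorphisms explicitly or include the one-line argument that an $R$-linear map between $S$-modules is automatically $S$-linear when $h$ is an epimorphism.
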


\begin{proof}
Let $I$ be an injective $S$-module, let $\ib\subseteq S$ be an ideal, and let $f\colon\ib\rightarrow\Gamma_{\ia S}(I)$ be a morphism of $S$-modules. Then, $f$ induces a morphism of $R$-modules\linebreak $\ib\cap R\rightarrow\Gamma_{\ia}(I\res_R)=\Gamma_{\ia S}(I)\res_R$. The $R$-module $I\res_R$ is injective by \cite[3.6A]{lam2}, hence so is $\Gamma_{\ia}(I\res_R)$ by our hypothesis on $R$. So, there exists $y\in\Gamma_{\ia}(I)$ such that if $x\in\ib\cap R$, then $f(h(x))=xy$. Let $x\in\ib$. Since $h$ is a flat epimorphism, $\ib=(\ib\cap R)S$ by \cite[IV.2.1]{lazard}, hence $x$ is an $S$-linear combination of elements of $\ib\cap R$, and thus $f(x)=xy$. Now, $\Gamma_{\ia S}(I)$ is injective by Baer's criterion, and the claim is proven.
\end{proof}

\begin{cor}\label{1.70}
Let $h\colon R\rightarrow S$ be a flat epimorphism of rings. If $R$ has ITI with respect to every ideal (of finite type), then $S$ has ITI with respect to every ideal (of finite type).
\end{cor}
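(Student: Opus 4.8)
The plan is to deduce this corollary directly from Proposition \ref{1.60} by observing that the map $\ia\mapsto\ia S$ behaves well with respect to the two distinguished families of ideals. First I would recall the setup: $h\colon R\rightarrow S$ is a flat epimorphism, and by \cite[IV.2.1]{lazard} every ideal $\ib\subseteq S$ satisfies $\ib=(\ib\cap R)S$, so every ideal of $S$ is of the form $\ia S$ for some ideal $\ia\subseteq R$ (namely $\ia=\ib\cap R$). Hence, assuming $R$ has ITI with respect to every ideal, an arbitrary ideal $\ib\subseteq S$ can be written as $\ia S$ with $\ia\subseteq R$ an ideal with respect to which $R$ has ITI, and Proposition \ref{1.60} gives that $S$ has ITI with respect to $\ib$. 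This settles the unparenthesised version.

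For the parenthesised version, I would additionally use that the formation of extensions preserves finite type: if $\ia\subseteq R$ is of finite type, say generated by $a_1,\dots,a_n$, then $\ia S$ is generated by $h(a_1),\dots,h(a_n)$, so $\ia S$ is an ideal of finite type in $S$. Conversely, I need that every ideal of finite type in $S$ arises this way — i.e. as $\ia S$ with $\ia\subseteq R$ itself of finite type. This follows again from $\ib=(\ib\cap R)S$: if $\ib=\langle b_1,\dots,b_n\rangle_S$ and each $b_j$ is an $S$-linear combination of elements of $\ib\cap R$, collect those finitely many elements of $\ib\cap R$ into a finite subset generating an ideal $\ia\subseteq R$ of finite type with $\ia S=\ib$; then $R$ has ITI with respect to $\ia$ by hypothesis, and Proposition \ref{1.60} yields ITI of $S$ with respect to $\ib$.

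The only mild subtlety — and the step I would be most careful about — is checking that the finitely many elements of $\ib\cap R$ needed to express a finite generating set of $\ib$ can indeed be chosen to lie in $R$ (more precisely, in the image of $h$, which is what ``$\cap R$'' abbreviates under the identification along $h$): this is exactly the content of $\ib=(\ib\cap R)S$, so each generator $b_j$ of $\ib$ lies in $(\ib\cap R)S$ and is therefore a finite $S$-linear combination $b_j=\sum_k s_{jk} c_{jk}$ with $c_{jk}\in\ib\cap R$; the ideal $\ia\dfgl\langle c_{jk}\mid j,k\rangle_R$ is of finite type, satisfies $\ia S=\ib$, and we are done. No genuine obstacle is expected here; the corollary is essentially a bookkeeping consequence of Proposition \ref{1.60} together with Lazard's description of ideals in the target of a flat epimorphism.
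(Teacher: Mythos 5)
Your argument is correct and is exactly the paper's proof: the paper also deduces the corollary from Proposition \ref{1.60} together with Lazard's identity $\ib=(\ib\cap R)S$ from \cite[IV.2.1]{lazard}, which you have merely spelled out in more detail (including the finite-type bookkeeping).
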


\begin{proof}
Immediately by \cite[IV.2.1]{lazard} and \ref{1.60}.
\end{proof}

\begin{cor}\label{1.80}
Let $T\subseteq R$ be a subset. If $R$ has ITI with respect to $\ia$, then $T^{-1}R$ has ITI with respect to $T^{-1}\ia$; if $R$ has ITI with respect to every ideal (of finite type), then $T^{-1}R$ has ITI with respect to every ideal (of finite type).
\end{cor}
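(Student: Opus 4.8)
The plan is to deduce Corollary \ref{1.80} directly from Corollary \ref{1.70} by exhibiting the localisation map as a flat epimorphism. First I would recall that for any multiplicatively closed set --- or, as here, an arbitrary subset $T$, which we may replace by the multiplicative submonoid it generates without changing $T^{-1}R$ --- the canonical morphism $h\colon R\rightarrow T^{-1}R$ is flat (localisation is exact) and is an epimorphism of rings: this is the standard fact that $T^{-1}R\otimes_R T^{-1}R\rightarrow T^{-1}R$ is an isomorphism, equivalently that any two ring morphisms out of $T^{-1}R$ agreeing on $R$ coincide. This is recorded in \cite[IV.2.1]{lazard} among the basic examples of flat epimorphisms.

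The remaining point is the bookkeeping of ideals: one must check that $T^{-1}\ia$ is precisely the extension $\ia\cdot(T^{-1}R)$ of $\ia$ along $h$, which is immediate from the description of ideals in a ring of fractions, and that an ideal of $T^{-1}R$ is of finite type if and only if it is the extension of a finitely generated ideal of $R$ --- again standard, since extension is surjective on ideals and carries finite generating sets to finite generating sets. With these identifications in place, the first assertion is exactly \ref{1.60} applied to $h$, and the second assertion is exactly \ref{1.70} applied to $h$.

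I do not anticipate any genuine obstacle here; the corollary is a routine specialisation of the preceding two results, and the only thing to be careful about is not to assume $T$ is already a submonoid (harmless) and to state the finite-type clause using the fact that $\mathrm{Spec}$-theoretic extension of ideals along $R\rightarrow T^{-1}R$ is surjective. Accordingly I would simply write: ``Immediately by \cite[IV.2.1]{lazard}, \ref{1.60} and \ref{1.70}, since the canonical morphism $R\rightarrow T^{-1}R$ is a flat epimorphism of rings.''
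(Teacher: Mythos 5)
Your proposal is correct and follows exactly the route the paper takes: the paper's proof is simply ``Immediately by \ref{1.60} and \ref{1.70}'', i.e.\ it applies those two results to the flat epimorphism $R\rightarrow T^{-1}R$. Your additional remarks (replacing $T$ by the submonoid it generates, identifying $T^{-1}\ia$ with $\ia\cdot T^{-1}R$, and noting that finite-type ideals of $T^{-1}R$ are extensions of finite-type ideals of $R$) are the standard bookkeeping the paper leaves implicit.
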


\begin{proof}
Immediately by \ref{1.60} and \ref{1.70}.
\end{proof}

\begin{qus}\label{1.90}
Since ITI properties are inherited by rings of fractions, it is natural to ask for a converse, i.e., whether they can be delocalised in one of the following ways:
\begin{aufz}
\item[(B)]{\it If $R_{\ip}$ has ITI with respect to $\ia_{\ip}$ for every $\ip\in\spec(R)$, does then $R$ have ITI with respect to $\ia$?}
\item[(C)]{\it Let $(f_i)_{i\in I}$ be a generating family of the ideal $R$. If $R_{f_i}$ has ITI with respect to $\ia_{f_i}$ for every $i\in I$, does then $R$ have ITI with respect to $\ia$?}
\end{aufz}
The following special case of Question (B) deserves to be mentioned separately.
\begin{aufz}
\item[(D)] {\it Do pointwise noetherian rings have ITI with respect to every\linebreak ideal?\/\footnote{A ring $R$ is {\it pointwise noetherian}\/ if $R_{\ip}$ is noetherian for every $\ip\in\spec(R)$. For facts on pointwise noetherian rings, including examples of non-noetherian but pointwise noetherian rings, we suggest \cite{heinzer-ohm}.}}\hfill$\bullet$
\end{aufz}
\end{qus}

Now we turn to a first class of rings with ITI properties, namely, the absolutely flat ones. Recall that $R$ is {\it absolutely flat}\/ if it fulfills the following equivalent conditions (\cite[3.71; 4.21]{lam2}): (i) Every $R$-module is flat; (ii) Every monogeneous $R$-module is flat;\linebreak (iii) $R_{\im}$ is a field for every maximal ideal $\im\subseteq R$; (iv) $R$ is reduced and $\dim(R)\leq 0$;\linebreak (v) $R$ is von Neumann regular. Since products of infinitely many fields are absolutely flat and non-noetherian, this provides a class of interesting non-noetherian rings with ITI. (In view of Question (D) in \ref{1.90} we may note that absolutely flat rings are pointwise noetherian.)

\begin{lemma}\label{1.100}
If the $R$-module $R/\ia$ is flat, then $\ia$ is idempotent.
\end{lemma}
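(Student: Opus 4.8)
The plan is to play flatness of $R/\ia$ against the canonical short exact sequence
$$0\longrightarrow\ia\stackrel{\iota}{\longrightarrow}R\longrightarrow R/\ia\longrightarrow 0,$$
where $\iota$ denotes the inclusion. First I would tensor this sequence over $R$ with $R/\ia$. Since $R/\ia$ is flat, the functor $-\otimes_R R/\ia$ is exact, so the resulting sequence
$$0\longrightarrow\ia\otimes_R R/\ia\stackrel{\iota\otimes\Id}{\longrightarrow}R\otimes_R R/\ia\longrightarrow R/\ia\otimes_R R/\ia\longrightarrow 0$$
is again exact; in particular $\iota\otimes\Id$ is a monomorphism.

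Next I would evaluate the source and target of this monomorphism by means of the canonical isomorphisms $\ia\otimes_R R/\ia\cong\ia/\ia^2$ and $R\otimes_R R/\ia\cong R/\ia$. Under these, the morphism $\iota\otimes\Id$ is identified with the morphism $\ia/\ia^2\to R/\ia$ induced by $\iota$, which carries the class of $a\in\ia$ to the class of $a$ in $R/\ia$. But the latter class is zero, precisely because $a\in\ia$. Hence the induced morphism $\ia/\ia^2\to R/\ia$ is the zero morphism; being also a monomorphism, it forces $\ia/\ia^2=0$, i.e. $\ia=\ia^2$, which is the assertion.

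I do not expect a genuine obstacle here; the one point deserving a moment's care is the compatibility of the two canonical isomorphisms with the maps induced by $\iota$, so that $\iota\otimes\Id$ really becomes the zero map $\ia/\ia^2\to R/\ia$ — this is a routine verification on elementary tensors. If one prefers a derived-functor formulation, one can argue equivalently: flatness of $R/\ia$ gives ${\rm Tor}_1^R(R/\ia,R/\ia)=0$, while the long exact ${\rm Tor}$-sequence attached to the displayed short exact sequence identifies ${\rm Tor}_1^R(R/\ia,R/\ia)$ with the kernel of $\ia\otimes_R R/\ia\to R/\ia$; as that map is zero, this kernel is all of $\ia\otimes_R R/\ia\cong\ia/\ia^2$, whence again $\ia=\ia^2$.
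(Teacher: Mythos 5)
Your proof is correct and follows essentially the same route as the paper's: tensor the canonical sequence $0\to\ia\to R\to R/\ia$ with the flat module $R/\ia$, identify $\ia\otimes_RR/\ia$ with $\ia/\ia^2$, and observe that the resulting monomorphism $\ia/\ia^2\to R/\ia$ is zero. The only cosmetic difference is that the paper exploits exactness at the middle term (the map $R/\ia\to R/\ia$ being the identity) rather than the injectivity of the first map directly; both yield $\ia=\ia^2$ immediately.
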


\begin{proof}
Applying the exact functor $\bullet\otimes_RR/\ia$ to the exact sequence $0\rightarrow\ia\hookrightarrow R\rightarrow R/\ia$ yields an exact sequence $0\rightarrow\ia/\ia^2\rightarrow R/\ia\overset{h}\rightarrow R/\ia$ with $h=\Id_{R/\ia}$. So, we get $\ia=\ia^2$ and thus the claim.
\end{proof}

\begin{prop}\label{1.110}
If the $R$-module $R/\ia$ is flat, then $R$ has ITI with respect to $\ia$.
\end{prop}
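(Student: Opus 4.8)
The plan is to reduce the statement to the classical fact that $\hm{R}{F}{I}$ is injective whenever $F$ is a flat $R$-module and $I$ is an injective $R$-module. The first step is to invoke Lemma~\ref{1.100}: flatness of $R/\ia$ forces $\ia$ to be idempotent, hence $\ia^n=\ia$ for every $n\geq 1$, and therefore $\Gamma_{\ia}(M)=\bigcup_{n\in\N}(0:_M\ia^n)=(0:_M\ia)$ for every $R$-module $M$. The second step is the (natural) identification $(0:_M\ia)\cong\hm{R}{R/\ia}{M}$, obtained by sending a homomorphism $R/\ia\rightarrow M$ to the image of $1$. Combining these, $\Gamma_{\ia}(M)\cong\hm{R}{R/\ia}{M}$ naturally in $M$.

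Now let $M$ be an injective $R$-module; I want $\hm{R}{R/\ia}{M}$ to be injective. By the Hom-tensor adjunction there is an isomorphism $\hm{R}{N}{\hm{R}{R/\ia}{M}}\cong\hm{R}{N\otimes_RR/\ia}{M}$, natural in the $R$-module $N$. The functor $N\mapsto N\otimes_RR/\ia$ is exact since $R/\ia$ is flat, and $\hm{R}{\bullet}{M}$ is exact since $M$ is injective; hence their composite $\hm{R}{\bullet}{\hm{R}{R/\ia}{M}}$ is exact, which is precisely injectivity of $\hm{R}{R/\ia}{M}$. Thus $\Gamma_{\ia}(M)$ is injective, so by condition~(i) of \ref{1.10} the ring $R$ has ITI with respect to $\ia$.

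There is no serious obstacle here: the only ingredients beyond Lemma~\ref{1.100} are the elementary isomorphism $(0:_M\ia)\cong\hm{R}{R/\ia}{M}$ and the routine adjunction argument above. The real content is the observation that flatness of $R/\ia$ collapses the $\ia$-torsion functor into the single colon submodule $(0:_M\,\ia)$, which is then representable as a Hom out of a flat module; everything else is formal.
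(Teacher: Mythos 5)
Your proof is correct, and it takes a genuinely different route from the paper's. The paper verifies criterion (iv) of \ref{1.10}: given an $\ia$-torsion module $M$, idempotency of $\ia$ (Lemma \ref{1.100}) gives $\ia M=0$, so $M$ is an $R/\ia$-module; one forms $E_{R/\ia}(M)$ and restricts scalars along the flat morphism $R\rightarrow R/\ia$, obtaining an injective $\ia$-torsion $R$-module into which $E_R(M)$ embeds, whence $E_R(M)$ is $\ia$-torsion. You instead verify criterion (i) directly: idempotency collapses $\Gamma_{\ia}$ to $(0:_{\bullet}\ia)\cong\hm{R}{R/\ia}{\bullet}$, and the adjunction $\hm{R}{N}{\hm{R}{R/\ia}{M}}\cong\hm{R}{N\otimes_RR/\ia}{M}$ shows that this functor carries injectives to injectives when $R/\ia$ is flat. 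The two arguments are really dual faces of the same adjunction (the paper's cited fact that scalar restriction along a flat ring map preserves injectivity is itself proved this way), but yours is more economical and functorial, bypasses injective hulls entirely, and makes explicit the representability $\Gamma_{\ia}(\bullet)\cong\hm{R}{R/\ia}{\bullet}$ that the authors later exploit in \ref{3.80}; the paper's version stays within the injective-hull formalism of Section 1 and directly yields the torsionness of $E_R(M)$, which is the form of the ITI property used most often elsewhere in the paper.
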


\begin{proof}
Let $M$ be an $\ia$-torsion $R$-module. The ideal $\ia$ is idempotent by \ref{1.100}, hence $\ia M=0$, and thus we can consider $M$ canonically as an $R/\ia$-module. Now, we consider the injective $R/\ia$-module $E_{R/\ia}(M)$ and its scalar restriction $E_{R/\ia}(M)\res_R$, which is an $\ia$-torsion module. Moreover, the canonical morphism $R\rightarrow R/\ia$ being flat, $E_{R/\ia}(M)\res_R$ is injective by \cite[3.6A]{lam2}. Scalar restriction to $R$ of $e_{R/\ia}(M)\colon M\rightarrow E_{R/\ia}(M)$ being a monomorphism of $R$-modules $M\rightarrow E_{R/\ia}(M)\res_R$ we thus get a monomorphism of $R$-modules $E_R(M)\rightarrow E_{R/\ia}(M)\res_R$. Therefore, $E_R(M)$ is an $\ia$-torsion module, and the claim is proven.
\end{proof}

\begin{cor}\label{1.120}
Absolutely flat rings have ITI with respect to every ideal.
\end{cor}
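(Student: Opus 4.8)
The plan is to deduce this immediately from Proposition \ref{1.110} together with the characterisation of absolutely flat rings recalled just above. Concretely, I would argue as follows.

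First, recall that by condition (i) in the definition of absolutely flat rings, every module over an absolutely flat ring $R$ is flat. In particular, for an arbitrary ideal $\ia\subseteq R$, the $R$-module $R/\ia$ is flat. Now Proposition \ref{1.110} applies verbatim and yields that $R$ has ITI with respect to $\ia$. Since $\ia$ was arbitrary, $R$ has ITI with respect to every ideal.

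There is essentially no obstacle at the level of the corollary itself: the entire content has already been extracted in Lemma \ref{1.100} (flatness of $R/\ia$ forces $\ia$ to be idempotent, hence $\ia M = 0$ for every $\ia$-torsion module $M$) and in Proposition \ref{1.110} (passing to $E_{R/\ia}(M)$, whose scalar restriction to $R$ is injective by flatness of $R\to R/\ia$ and is visibly $\ia$-torsion, and comparing it with $E_R(M)$). The only thing worth flagging is that one must invoke the correct equivalent formulation of absolute flatness, namely that \emph{all} modules are flat rather than, say, the characterisation via $R_{\im}$ being a field; using the ``all modules flat'' form makes the reduction to Proposition \ref{1.110} completely routine. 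One could also remark parenthetically that this recovers, as a special case, the fact that a finite product of fields has ITI with respect to every ideal, but the argument above already covers arbitrary (possibly infinite) products of fields and more generally all von Neumann regular rings.
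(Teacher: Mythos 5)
Your proof is correct and is exactly the paper's argument: the corollary is stated there as following ``immediately by \ref{1.110}'', and the only content you add is the (correct) observation that one should use the ``every module is flat'' characterisation of absolute flatness to see that $R/\ia$ is flat for an arbitrary ideal $\ia$. Nothing further is needed.
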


\begin{proof}
Immediately by \ref{1.110}.
\end{proof}

\begin{cor}\label{1.125}
Let $\im\subseteq R$ be a maximal ideal such that the $R$-module $R/\im$ is injective or that $R_{\im}$ is a field. Then, $R$ has ITI with respect to $\im$.
\end{cor}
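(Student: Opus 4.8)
The plan is to reduce both hypotheses to Proposition~\ref{1.110} by showing that in either case the $R$-module $R/\im$ is flat. First suppose $R_\im$ is a field. I would verify flatness of $R/\im$ locally: for each $\ip\in\spec(R)$ one has $(R/\im)\otimes_RR_\ip\cong R_\ip/\im R_\ip$, which is the zero module when $\im\not\subseteq\ip$ (since then $\im R_\ip=R_\ip$), and equals $R_\im$ when $\ip=\im$ (the only case $\im\subseteq\ip$ can occur, as $\im$ is maximal), the latter because $\im R_\im=0$; both are flat over $R_\ip$, so $R/\im$ is flat over $R$, and Proposition~\ref{1.110} yields ITI with respect to $\im$. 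Note that the local criterion for flatness used here carries no finiteness hypothesis, so nothing is lost in the non-noetherian setting even if $\im$ is not of finite type.

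Next suppose $R/\im$ is injective; I would show this forces $R_\im$ to be a field, thereby returning to the previous case. If $R_\im$ were not a field, then $\im R_\im\neq 0$, so there is $a\in\im$ with $a/1\neq 0$ in $R_\im$, i.e.\ with $(0:_Ra)\subseteq\im$. Then $ba\mapsto b+\im$ is a well-defined morphism of $R$-modules $\psi\colon aR\to R/\im$ (well-definedness using $(0:_Ra)\subseteq\im$), and $\psi$ is nonzero since $\psi(a)=1\neq 0$ in the field $R/\im$. Injectivity of $R/\im$ would extend $\psi$ to some $\tilde\psi\colon R\to R/\im$; but then $\psi(ba)=\tilde\psi(ba)=ba\cdot\tilde\psi(1)=0$ for every $b\in R$, because $ba\in\im$ annihilates $R/\im$ --- contradicting $\psi(a)=1$. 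Hence $\im R_\im=0$, so $R_\im$ is a field and we conclude by the first case.

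None of these steps is genuinely hard --- it is after all a corollary --- and the only point requiring a moment's thought is the second case: the right move is to locate the obstruction to $\im R_\im=0$ inside a cyclic submodule $aR$ whose annihilator is small enough to push $a$ into $\im$, so that injectivity of $R/\im$ can be contradicted directly via Baer-type extension. (Alternatively, since $R/\im$ is naturally an $R_\im$-module and $R\to R_\im$ is an epimorphism of rings, one can first observe that $R/\im$ is injective over $R_\im$ as well, and then invoke that a local ring whose residue field is injective over itself must be a field; but the argument above avoids this detour.)
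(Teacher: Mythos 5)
Your proof is correct and follows essentially the same route as the paper: both reduce to Proposition \ref{1.110} by establishing flatness of $R/\im$, the paper by citing \cite[3.72]{lam2} and \cite[Lemma 1]{hirano} for the equivalences ``$R/\im$ injective $\Leftrightarrow$ $R_{\im}$ a field $\Leftrightarrow$ $R/\im$ flat''. The only difference is that you supply direct, self-contained arguments (the local flatness check and the Baer-type contradiction) for the implications the paper merely cites, and these arguments are sound.
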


\begin{proof}
Both conditions are equivalent by \cite[3.72]{lam2}, and they are equivalent to $R/\im$ being flat by \cite[Lemma 1]{hirano}. The claim follows then from \ref{1.110}.
\end{proof}

Next we study the relation between the property of an $R$-module $M$ of being an $\ia$-torsion module and the assassin of $M$, i.e., the set $\ass_R(M)$ of primes associated with $M$. If $R$ is noetherian, it is well-known that $M$ is an $\ia$-torsion module if and only if its assassin is contained in $\var(\ia)$ (i.e., the set of primes containing $\ia$). Since assassins are not well-behaved over non-noetherian rings, we also consider the so-called weak assassin $\assf_R(M)$. Recall that a prime ideal $\ip\subseteq R$ is {\it weakly associated with $M$} if it is a minimal prime of the annihilator of an element of $M$. For basics about weak assassins we refer the reader to \cite[IV.1 Exercise 17]{ac}, \cite{irozrush} and \cite{yassemi}.

These considerations lead to a new proof of the fact that noetherian rings have ITI with respect to every ideal, and to the result that $1$-dimensional local domains (e.g., valuation rings of height $1$) have ITI with respect to ideals of finite type.

\begin{prop}\label{1.130}
Let $M$ be an $R$-module. We consider the following statements: (1) $M$ is an $\ia$-torsion module; (2) $\assf_R(M)\subseteq\var(\ia)$; (3) $\ass_R(M)\subseteq\var(\ia)$.

We have (1)$\Rightarrow$(2)$\Rightarrow$(3); if $\ia$ is of finite type, we have (1)$\Leftrightarrow$(2)$\Rightarrow$(3); if $R$ is noetherian, we have (1)$\Leftrightarrow$(2)$\Leftrightarrow$(3).
\end{prop}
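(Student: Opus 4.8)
The plan is to establish the three implications separately, keeping track of exactly which hypotheses on $\ia$ are needed at each stage. The implication (2)$\Rightarrow$(3) is a general fact about the two assassins that does not involve $\ia$ at all: every associated prime is weakly associated (an annihilator of an element that happens to be prime is trivially the unique minimal prime over itself), so $\ass_R(M)\subseteq\assf_R(M)$, and the inclusion into $\var(\ia)$ is inherited. This I would dispatch in one line with a reference to \cite{irozrush} or \cite{yassemi}.

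For (1)$\Rightarrow$(2), assume $M$ is $\ia$-torsion and let $\ip\in\assf_R(M)$, so $\ip$ is a minimal prime over $(0:_Rx)$ for some $x\in M$. Since $M$ is $\ia$-torsion there is $n\in\N$ with $\ia^nx=0$, i.e. $\ia^n\subseteq(0:_Rx)\subseteq\ip$; as $\ip$ is prime this forces $\ia\subseteq\ip$, so $\ip\in\var(\ia)$. This argument needs nothing about $\ia$ being finitely generated.

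The substantive direction is the converse (2)$\Rightarrow$(1) under the hypothesis that $\ia$ is of finite type, say $\ia=\langle a_1,\dots,a_r\rangle$. Here I would argue contrapositively: suppose $M$ is not $\ia$-torsion, so there is $x\in M$ with $\ia^nx\ne 0$ for every $n$. I want to produce a weakly associated prime outside $\var(\ia)$. The natural move is to pass to the submodule $N=Rx$ and consider the multiplicatively closed set generated by $a_1,\dots,a_r$, or more efficiently localize: because $\ia$ is finitely generated, $\Gamma_{\ia}$ commutes with the relevant localizations and $\ia^n\not\subseteq(0:_Rx)$ for all $n$ means $(0:_Rx)$ is not contained in $\var(\ia)^c$-avoiding... more concretely, the ideal $(0:_Rx)$ does not contain any power of $\ia$, hence (since $\ia$ is finitely generated) $\ia\not\subseteq\sqrt{(0:_Rx)}$, so there is a prime $\iq\supseteq(0:_Rx)$ with $\ia\not\subseteq\iq$. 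Taking $\ip$ minimal over $(0:_Rx)$ below $\iq$ gives $\ip\in\assf_R(M)$ with $\ip\notin\var(\ia)$, contradicting (2). The key point that makes finite generation essential is the step ``no power of $\ia$ lies in an ideal $I$'' $\Rightarrow$ ``$\ia\not\subseteq\sqrt I$'': for finitely generated $\ia$ one has $\ia\subseteq\sqrt I$ iff $\ia^n\subseteq I$ for some $n$, which fails in general.

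Finally, for the noetherian case one only needs (3)$\Rightarrow$(1) (or (3)$\Rightarrow$(2)); here $\ass_R(M)=\assf_R(M)$ since over a noetherian ring minimal primes of annihilators are themselves annihilators of elements, and every finitely generated ideal (in particular $\ia$) is finitely generated, so this reduces to the finite-type case already handled. I expect the main obstacle to be phrasing the (2)$\Rightarrow$(1) argument cleanly without circular appeal to properties of $\Gamma_{\ia}$ over non-noetherian rings; the radical-versus-power-of-ideal subtlety is where all the care is required, and it is exactly the place where the ``$\ia$ of finite type'' hypothesis is indispensable.
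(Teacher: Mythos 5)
Your proposal is correct and follows essentially the same route as the paper: the implications (1)$\Rightarrow$(2) and (2)$\Rightarrow$(3) are handled identically, and your contrapositive argument for (2)$\Rightarrow$(1) is just the reverse phrasing of the paper's direct one, both hinging on the same key fact that for $\ia$ of finite type, $\ia\subseteq\sqrt{(0:_Rx)}$ forces $\ia^nx=0$ for some $n$. The only blemishes are cosmetic: the abandoned aside about $\Gamma_{\ia}$ commuting with localization, and the slip ``every finitely generated ideal is finitely generated'' where you mean that every ideal of a noetherian ring is of finite type.
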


\begin{proof}
Recall that $\ass_R(M)\subseteq\assf_R(M)$, with equality if $R$ is noetherian (\cite[IV.1 Exercice 17]{ac}). Hence, (2) implies (3), and the converse holds if $R$ is noetherian.

If $M$ is an $\ia$-torsion module and $\ip\in\assf_R(M)$, there exists $x\in M$ such that $\ip$ is a minimal prime of $(0:_Rx)$, hence there exists $n\in\N$ with $\ia^nx=0$, thus we get $\ia^n\subseteq(0:_Rx)\subseteq\ip$, and therefore $\ia\subseteq\ip$. This shows (1)$\Rightarrow$(2).

Suppose now that $\ia$ is of finite type. If $\assf_R(M)\subseteq\var(\ia)$, then for $x\in M$ we have $\ia\subseteq\sqrt{(0:_Rx)}$, and thus there exists $n\in\N$ with $\ia^nx=0$. This shows (2)$\Rightarrow$(1).
\end{proof}

\begin{prop}\label{1.140}
If every $R$-module $M$ with $\ass_R(M)\subseteq\var(\ia)$ is an $\ia$-torsion module, then $R$ has ITI with respect to $\ia$.
\end{prop}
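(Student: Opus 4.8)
The plan is to verify condition (i) of \ref{1.10}. So let $I$ be an injective $R$-module and put $N\dfgl\Gamma_\ia(I)$; the goal is to show $N=E_R(N)$, i.e.\ that $N$ is itself injective. The idea is to trap $E_R(N)$ between $N$ and $I$: on the one hand $E_R(N)$ will turn out to be an $\ia$-torsion module by the hypothesis, and on the other hand it embeds into $I$ compatibly with the inclusion $N\hookrightarrow I$, so that being $\ia$-torsion forces it back inside $\Gamma_\ia(I)=N$.

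In detail I would proceed as follows. First, $N$ is an $\ia$-torsion module, so the implication (1)$\Rightarrow$(3) of \ref{1.130} gives $\ass_R(N)\subseteq\var(\ia)$. Next, passing to the injective hull does not change the assassin, i.e.\ $\ass_R(E_R(N))=\ass_R(N)$, so $\ass_R(E_R(N))\subseteq\var(\ia)$, and the hypothesis yields that $E_R(N)$ is an $\ia$-torsion module. Finally, since $I$ is injective, $e_R(N)\colon N\rightarrowtail E_R(N)$ extends to a morphism $g\colon E_R(N)\to I$ with $g\circ e_R(N)$ equal to the inclusion $N\hookrightarrow I$; as this restriction is injective and $N$ is essential in $E_R(N)$, the morphism $g$ is a monomorphism, and identifying $E_R(N)$ with its image we obtain $N\subseteq E_R(N)\subseteq I$. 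Then $E_R(N)=\Gamma_\ia(E_R(N))\subseteq\Gamma_\ia(I)=N$, whence $N=E_R(N)$ is injective, as desired. (One could equally run the argument through condition (iii) of \ref{1.10}, applied to $E_R(M)$ in place of $I$; the two formulations coincide.)

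The only point that is not pure diagram chasing with injective hulls is the equality $\ass_R(E_R(N))=\ass_R(N)$, which I expect to be the part requiring a short argument, valid over an arbitrary (not necessarily noetherian) ring. The inclusion ``$\supseteq$'' is clear. For ``$\subseteq$'', take $\ip\in\ass_R(E_R(N))$, say $\ip=(0:_Rx)$ with $x\in E_R(N)$, so that $Rx\cong R/\ip$; since $N$ is essential in $E_R(N)$ there is $r\in R$ with $0\neq rx\in N$, whence $r\notin\ip$, and using that $\ip$ is prime one checks $(0:_Rrx)=\ip$, so that $\ip\in\ass_R(N)$. With this in hand the proof is complete; note that no finiteness or noetherianness hypothesis on $\ia$ or on $R$ enters anywhere, in accordance with the generality of the statement.
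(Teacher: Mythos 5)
Your proof is correct and follows essentially the same route as the paper: both rest on the invariance of the assassin under injective hulls together with the implication (1)$\Rightarrow$(3) of \ref{1.130}, so that the hypothesis forces $E_R(N)$ to be an $\ia$-torsion module for any $\ia$-torsion module $N$. The paper simply stops there by invoking condition (iv) of \ref{1.10}, whereas you additionally re-derive condition (i) by hand (and supply a proof of the assassin fact, which the paper cites from Lam); both of these extra steps are fine.
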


\begin{proof}
Every $R$-module has the same assassin as its injective hull (\cite[3.57]{lam2}). So, if $M$ is an $\ia$-torsion $R$-module, then $\ass_R(E_R(M))=\ass_R(M)\subseteq\var(\ia)$ by \ref{1.130}, hence $E_R(M)$ is an $\ia$-torsion module, and thus \ref{1.10} yields the claim.
\end{proof}

\begin{cor}\label{1.150}
Noetherian rings have ITI with respect to every ideal.
\end{cor}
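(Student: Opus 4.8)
The plan is to obtain this immediately from Propositions \ref{1.130} and \ref{1.140}, which between them have already absorbed all the real content. Concretely, fix a noetherian ring $R$ and an arbitrary ideal $\ia\subseteq R$; note that $\ia$ is automatically of finite type. By \ref{1.140} it suffices to check the hypothesis there, namely that every $R$-module $M$ with $\ass_R(M)\subseteq\var(\ia)$ is an $\ia$-torsion module.

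That last implication is precisely the chain ``(3)$\Rightarrow$(2)$\Rightarrow$(1)'' of \ref{1.130} specialised to the noetherian case: since $R$ is noetherian we have $\ass_R(M)=\assf_R(M)$, so $\ass_R(M)\subseteq\var(\ia)$ yields $\assf_R(M)\subseteq\var(\ia)$; and since $\ia$ is of finite type the implication ``(2)$\Rightarrow$(1)'' of \ref{1.130} (which turns on $\ia\subseteq\sqrt{(0:_Rx)}$ forcing $\ia^n x=0$) gives that $M$ is $\ia$-torsion. With the hypothesis of \ref{1.140} verified, I conclude that $R$ has ITI with respect to $\ia$, and since $\ia$ was arbitrary, $R$ has ITI with respect to every ideal.

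There is essentially no obstacle at this point: the classical input is the equality $\ass_R=\assf_R$ over noetherian rings (\cite[IV.1 Exercice 17]{ac}) together with the fact that an $R$-module and its injective hull have the same assassin (\cite[3.57]{lam2}), both of which are already invoked in \ref{1.130} and \ref{1.140}. The only point to be mindful of is that, the whole purpose of \ref{1.150} being to reprove the statement by module-theoretic means, the argument should not fall back on \cite[2.1.4]{bs}.
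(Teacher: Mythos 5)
Your argument is exactly the paper's: the proof of \ref{1.150} is literally ``Immediately from \ref{1.130} and \ref{1.140}'', and your verification of the hypothesis of \ref{1.140} via the chain (3)$\Rightarrow$(2)$\Rightarrow$(1) of \ref{1.130} in the noetherian case is precisely what that citation encodes. The proposal is correct and takes essentially the same approach.
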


\begin{proof}
Immediately from \ref{1.130} and \ref{1.140}.
\end{proof}

\begin{prop}\label{1.160}
If $\assf_R(M)=\assf_R(E_R(M))$ for every $R$-module $M$, then $R$ has ITI with respect to ideals of finite type.
\end{prop}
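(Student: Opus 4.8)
The plan is to mimic the proof of \ref{1.140}, but with weak assassins in place of ordinary assassins, so as to exploit the equivalence (1)$\Leftrightarrow$(2) of \ref{1.130}, which is precisely the form available for ideals of finite type over an arbitrary ring. In this sense \ref{1.160} is the ``weakly associated'' counterpart of \ref{1.140}, just as \ref{1.130} provides the weakly associated counterpart of the noetherian characterisation used to prove \ref{1.150}.

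Concretely, I would fix an ideal $\ia\subseteq R$ of finite type and argue as follows. By \ref{1.10} (as already used in the proof of \ref{1.140}), it suffices to show that $E_R(M)$ is an $\ia$-torsion $R$-module whenever $M$ is one: indeed, applying this to $M=\Gamma_{\ia}(N)$ for an arbitrary $R$-module $N$ yields condition \ref{1.10}(iv). So let $M$ be an $\ia$-torsion $R$-module. By the implication (1)$\Rightarrow$(2) of \ref{1.130}, which holds for every ideal, we have $\assf_R(M)\subseteq\var(\ia)$. The hypothesis then gives $\assf_R(E_R(M))=\assf_R(M)\subseteq\var(\ia)$. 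Finally, invoking the implication (2)$\Rightarrow$(1) of \ref{1.130} --- this is the step where finiteness of type of $\ia$ is needed --- we conclude that $E_R(M)$ is an $\ia$-torsion module, which is what we wanted.

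There is no real obstacle here; the argument is a two-line deduction from \ref{1.130} and \ref{1.10}. The only point deserving attention is that the equivalence ``$M$ is $\ia$-torsion $\iff\assf_R(M)\subseteq\var(\ia)$'' of \ref{1.130} requires $\ia$ to be of finite type, which is exactly why the conclusion is restricted to ideals of finite type and cannot be strengthened to all ideals by this method.
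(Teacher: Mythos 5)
Your argument is correct and is exactly the paper's intended proof: the paper's own justification reads ``Replacing $\ass_R$ by $\assf_R$ in the proof of \ref{1.140} yields the claim,'' which unwinds to precisely your two-line deduction from \ref{1.130} (using (1)$\Rightarrow$(2) in general and (2)$\Rightarrow$(1) for ideals of finite type) together with \ref{1.10}, with the hypothesis $\assf_R(M)=\assf_R(E_R(M))$ playing the role that \cite[3.57]{lam2} plays for ordinary assassins.
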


\begin{proof}
Replacing $\ass_R$ by $\assf_R$ in the proof of \ref{1.140} yields the claim.
\end{proof}

\begin{prop}\label{1.170}
Local rings whose non-maximal prime ideals are of finite type have ITI with respect to ideals of finite type.
\end{prop}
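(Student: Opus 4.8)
The strategy is to reduce to the criterion of \ref{1.160}: it suffices to show that, for a local ring $(R,\im)$ whose non-maximal primes are of finite type, every $R$-module $M$ satisfies $\assf_R(M)=\assf_R(E_R(M))$. Since the inclusion $\assf_R(M)\subseteq\assf_R(E_R(M))$ is automatic (any element of $M$ is an element of $E_R(M)$, with the same annihilator), the work is all in the reverse inclusion. So fix $\ip\in\assf_R(E_R(M))$; I want $\ip\in\assf_R(M)$.

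First I would dispose of the easy case $\ip=\im$. Here $\ip$ is a minimal prime of $(0:_R y)$ for some $y\in E_R(M)$, and since $\im$ is maximal this forces $\sqrt{(0:_R y)}=\im$; as $M$ is essential in $E_R(M)$, the submodule $Ry\cap M$ is nonzero, so pick $0\neq z=ry\in M$. Then $(0:_R y)\subseteq(0:_R z)$, and I must check $(0:_R z)\neq R$ and that $\im$ is still minimal over it — but $(0:_R z)\subseteq\im$ because $z\neq 0$ in the local ring, and $\sqrt{(0:_R z)}\supseteq\sqrt{(0:_R y)}=\im$, so $\sqrt{(0:_R z)}=\im$ and hence $\im$ is (the unique) minimal prime of $(0:_R z)$, giving $\im\in\assf_R(M)$.

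The substantive case is $\ip\subsetneq\im$, where by hypothesis $\ip$ is of finite type. Now $\ip$ is a minimal prime of $(0:_R y)$ for some $y\in E_R(M)$. The point is that \emph{over the localisation} things are governed by finitely generated data: passing to $R_\ip$, the ideal $\ip_\ip$ is a minimal prime of $(0:_R y)_\ip\subseteq(0:_{R_\ip} y/1)$, so after a standard argument there is $s\notin\ip$ with $\ip$ minimal over $(0:_R sy)$; replacing $y$ by $sy$ we may assume $\ip\supseteq(0:_R y)$ with $\ip$ of finite type, say $\ip=\langle a_1,\dots,a_n\rangle$. Because $\ip$ is a minimal prime of $(0:_R y)$, for each $i$ there is $m_i\in\N$ and $t_i\notin\ip$ with $t_i a_i^{m_i}y\in\langle$ lower powers $\rangle$... more cleanly: minimality of $\ip$ over $(0:_Ry)$ in the finite-type situation means $\ip^N y\subseteq$ something killed off-$\ip$, so one finds $t\notin\ip$ and $k\in\N$ with $t\ip^k y=0$ but $ty\neq 0$ (using essentiality to keep $ty\neq 0$ after intersecting $Rty$ with $M$). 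Replacing $y$ by a suitable element of $Rty\cap M$ and then by $\ip^{k-1}(\dots)$ to reach the socle-type element, I obtain $z\in M$ with $z\neq 0$ and $\ip^k z=0$ for minimal such $k$; picking $w\in\ip^{k-1}z\cap M$, $w\neq 0$, gives $(0:_R w)\supseteq\ip$, hence $(0:_R w)=\ip$ since $\ip$ is prime and $(0:_R w)\subsetneq R$ is contained in some prime over which... — the cleanest finish: $\ip$ is a minimal prime of $(0:_R w)$ because $\ip\subseteq(0:_R w)$ and any prime inside $(0:_R w)$ contains $(0:_R y)$ and lies inside $\ip$ by essentiality traced back, forcing equality. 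Thus $\ip\in\assf_R(M)$.

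The main obstacle is the middle of the $\ip\subsetneq\im$ case: manufacturing, from an element $y\in E_R(M)$ witnessing $\ip\in\assf_R(E_R(M))$, an honest element of $M$ witnessing $\ip\in\assf_R(M)$. The two ingredients that make this go through are (a) essentiality of $M\subseteq E_R(M)$, which lets me replace any nonzero $y\in E_R(M)$ by a nonzero $R$-multiple lying in $M$ without shrinking the relevant radical, and (b) finite generation of $\ip$, which is exactly what converts ``$\ip$ minimal over $(0:_R y)$'' into the quantitative statement ``$\ip^k y$ is annihilated by something outside $\ip$'' that one can push around. Once those are in hand the argument is the familiar one extracting a socle element; I would present it carefully but it is routine. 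With $\assf_R(M)=\assf_R(E_R(M))$ established for all $M$, \ref{1.160} gives ITI with respect to every ideal of finite type, as claimed. $\sq$
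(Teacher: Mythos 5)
Your overall frame is the paper's: reduce via \ref{1.160} to showing $\assf_R(M)=\assf_R(E_R(M))$, and your treatment of the case $\ip=\im$ (multiply by $r$ with $ry\in M\setminus 0$ and observe that every minimal prime of $(0:_Rry)$ contains $(0:_Ry)$, hence equals $\im$ by minimality of $\im$ over $(0:_Ry)$) is exactly the paper's closing argument. The preliminary steps of your finite-type case are also sound: from $\ip$ finitely generated and minimal over $(0:_Ry)$ one does get $t\notin\ip$ and $k$ with $t\ip^ky=0$ and $ty\neq 0$.

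The gap is at the end of the case $\ip\subsetneqq\im$. After using essentiality to replace $ty$ by $z=sty\in M\setminus 0$ and extracting $w\in\ip^{k-1}z\setminus 0$, you obtain $\ip\subseteq(0:_Rw)$; but to conclude $\ip\in\assf_R(M)$ you need $\ip$ to be a \emph{minimal} prime of $(0:_Rw)$, which given $\ip\subseteq(0:_Rw)$ forces $(0:_Rw)=\ip$ — and nothing in the construction gives the reverse inclusion $(0:_Rw)\subseteq\ip$. Both the multiplier $s$ supplied by essentiality and the choice of an element of $\ip^{k-1}z$ can enlarge the annihilator beyond $\ip$ (your proposed justification, that any prime containing $(0:_Rw)$ ``lies inside $\ip$ by essentiality traced back'', has no basis). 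The socle-extraction trick only forces the annihilator to equal $\ip$ when $\ip$ is \emph{maximal}; for non-maximal $\ip$ one must first localise at $\ip$, do the extraction there, and descend using finite generation of $\ip$ — which is essentially the content of Yassemi's theorem that a prime of finite type is weakly associated with a module if and only if it is associated with it. The paper avoids reproving this: it cites \cite[1.8]{yassemi2} to get $\ip\in\ass_R(E_R(M))$, then uses $\ass_R(E_R(M))=\ass_R(M)$ (\cite[3.57]{lam2}) to land in $\assf_R(M)$, so that any prime in $\assf_R(E_R(M))\setminus\assf_R(M)$ must fail to be of finite type and hence must be $\im$, reducing everything to the case you did handle correctly. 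Your proof becomes complete if you replace the middle of your second case by these two citations.
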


\begin{proof}
Let $R$ be such a ring and let $\im$ denote its maximal ideal. If $\im$ is of finite type then $R$ is noetherian by Cohen's theorem (\cite[0.6.4.7]{ega}) and the claim follows from \ref{1.150}. So, suppose $\im$ is not of finite type. 

Let $M$ be an $R$-module. By \ref{1.160}, it suffices to show $\assf_R(M)=\assf_R(E_R(M))$. If $M=0$, this is fulfilled. We suppose $M\neq 0$ and assume $\assf_R(M)\subsetneqq\assf_R(E_R(M))$. By \cite[1.8]{yassemi2}, a prime ideal of $R$ of finite type is associated with $M$ if and only if it is weakly associated with $M$. Therefore, our assumptions imply $\im\in\assf_R(E_R(M))\setminus\assf_R(M)$. So, there exists $x\in E_R(M)$ such that $\im$ is a minimal prime of $(0:_Rx)$, and as $M\hookrightarrow E_R(M)$ is essential there exists $r\in R$ with $rx\in M\setminus 0$. The ideal $(0:_Rrx)\subseteq R$ has a minimal prime $\ip$ which is weakly associated with $M$, so that $\ip\neq\im$, but it contains $(0:_Rx)$, yielding the contradiction $\ip=\im$.
\end{proof}

\begin{cor}\label{1.180}
$1$-dimensional local domains have ITI with respect to ideals of finite type.
\end{cor}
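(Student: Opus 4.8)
The plan is to reduce everything to Proposition \ref{1.170}: it suffices to verify that a $1$-dimensional local domain has all of its non-maximal prime ideals of finite type.

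So let $R$ be a $1$-dimensional local domain with maximal ideal $\im$. First I would determine $\spec(R)$. The zero ideal is prime because $R$ is a domain. If $\ip\subseteq R$ is any nonzero prime, then $(0)\subsetneq\ip$, so the height of $\ip$ is at least $1$; since $\dim(R)=1$, this forces $\dim(R/\ip)=0$, and $R/\ip$ being a domain it is then a field, whence $\ip=\im$. (Should one wish to allow $\dim(R)\leq 1$, the only additional case is $R$ a field, which has no non-maximal primes at all.) Hence the unique non-maximal prime of $R$ is $(0)$, which is generated by the empty family and therefore of finite type.

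Now Proposition \ref{1.170} applies directly and yields that $R$ has ITI with respect to ideals of finite type. I do not expect any genuine obstacle here: the only step needing an argument is the description of $\spec(R)$, and that follows at once from the definition of Krull dimension together with the domain hypothesis; all the substance is already contained in \ref{1.170}, of which this corollary is merely the observation that $1$-dimensional local domains lie in its scope (for instance, height-$1$ valuation rings, which need not be noetherian).
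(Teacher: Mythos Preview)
Your argument is correct and coincides with the paper's own proof, which simply reads ``Immediately from \ref{1.170}''; you have merely spelled out why a $1$-dimensional local domain satisfies the hypothesis of \ref{1.170}, namely that its unique non-maximal prime $(0)$ is of finite type.
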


\begin{proof}
Immediately from \ref{1.170}.
\end{proof}

\begin{qus}\label{1.190}
The successful application of \ref{1.160} in the proof of \ref{1.170} lets us ask for more:
\begin{itemize}
\item[(E)]{\it For which rings $R$ do we have $\assf_R(M)=\assf_R(E_R(M))$ for every $R$-module $M$?}
\end{itemize}
By \cite[3.57]{lam2}, one class of such rings, containing the noetherian ones, are those over which assassins and weak assassins coincide:
\begin{itemize}
\item[(F)]{\it For which rings $R$ do we have $\ass_R(M)=\assf_R(M)$ for every $R$-module $M$?}
\end{itemize}
One may note that assassins and weak assassins do not necessarily coincide over a $1$-dimensional local domain: If $R$ is a valuation ring with value group $\Q$ and valuation $\nu$, and $\ia\dfgl\{x\in R\mid\nu(x)^2>2\}$, then $\ass_R(R/\ia)$ is empty and $\assf_R(R/\ia)$ contains the maximal ideal of $R$.\footnote{This example was suggested by Neil Epstein via {\tt MathOverflow.net}.}\hfill$\bullet$
\end{qus}

We end this section with two examples, showing that the hypotheses in \ref{1.130} cannot be omitted.

\begin{exas}\label{1.200}
A) Let $K$ be a field. We consider the absolutely flat ring $R\dfgl K^{\N}$, the ideal $\ib\subseteq R$ whose elements are precisely the elements of $R$ of finite support, the $R$-module $M\dfgl R/\ib$, and $f=(f_n)_{n\in\N}\in R$ with $f_0=0$ and $f_n=1$ for every $n>0$. Then, $f\in R\setminus 0$ is idempotent, and hence the principal ideal $\ia\dfgl\langle f\rangle_R$ is not nilpotent. Yassemi showed in \cite[Section 1, Example]{yassemi} that $\ass_R(M)=\emptyset$ and in particular $\ass_R(M)\subseteq\var(\ia)$. If $M$ is an $\ia$-torsion module and $x=(x_n)_{n\in\N}\in R$, then setting $x'_0=0$ and $x'_n=x_n$ for $n>0$ it follows $x'\dfgl(x'_n)_{n\in\N}\in\ib$, and so there exists $n_0\in\N$ such that if $n\geq n_0$, then $x_n=0$ -- a contradiction. This show that $M$ is not an $\ia$-torsion module.

In particular, the implication (3)$\Rightarrow$(1) in \ref{1.130} does not necessarily hold, even if $\ia$ is principal.

\smallskip

B) Let $R$ be a $0$-dimensional local ring whose maximal ideal $\im$ is not nilpotent, e.g. the ring constructed below in \ref{2.20}\,c). Then, $R$ is not an $\im$-torsion module, but $\assf_R(R)\subseteq\spec(R)=\var(\im)$.

In particular, the implication (2)$\Rightarrow$(1) in \ref{1.130} does not necessarily hold.\hfill$\bullet$
\end{exas}


\section{Rings without ITI}

In this section, we exhibit several examples of rings without the ITI property. The first bunch of examples relies on the following observation about local rings with idempotent maximal ideal.

\begin{prop}\label{2.10}
If $R$ is a local ring whose maximal ideal $\im$ is idempotent, then the following statements are equivalent: (i) $R$ is a field; (ii) $R$ is noetherian; (iii) $R$ has ITI with respect to $\im$; (iv) $E_R(R/\im)$ is an $\im$-torsion module.
\end{prop}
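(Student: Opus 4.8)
The plan is to prove the chain of implications (i)$\Rightarrow$(ii)$\Rightarrow$(iii)$\Rightarrow$(iv)$\Rightarrow$(i), since (i)$\Rightarrow$(ii) is trivial (a field is noetherian) and (ii)$\Rightarrow$(iii) is exactly \ref{1.150}, while (iii)$\Rightarrow$(iv) is exactly \ref{1.30}. So the only substantial work is the implication (iv)$\Rightarrow$(i), which is where I expect the main obstacle to lie.

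For (iv)$\Rightarrow$(i), I would argue by contraposition or directly as follows. Assume $E_R(R/\im)$ is an $\im$-torsion module; I want to conclude $\im=0$. The key point is to exploit that $\im$ is idempotent together with local-ness. Since $R$ is local with maximal ideal $\im$, the module $R/\im$ is the residue field $k$, and $E_R(k)$ is its injective hull. The $\im$-torsion hypothesis means every element of $E_R(k)$ is killed by some power $\im^n$; but $\im^n=\im$ for all $n\geq 1$ by idempotence, so in fact $\im E_R(k)=0$, i.e. $E_R(k)$ is an $R/\im$-module, hence a $k$-vector space. An injective hull of $k$ that is itself a $k$-vector space and an essential extension of $k$ must equal $k$ (a nonzero $k$-vector space essential extension of the line $k$ is $k$ itself, since any larger vector space has a complementary line intersecting $k$ trivially). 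Therefore $E_R(k)=k$, so $k$ is an injective $R$-module.

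Now I need to deduce from injectivity of $R/\im$ that $R$ is a field. One clean way: if $R/\im$ is injective, then the surjection $R\twoheadrightarrow R/\im$ splits, giving $R\cong R/\im\oplus\im$ as $R$-modules; but $R$ is local, hence indecomposable as a module over itself (its only idempotents are $0$ and $1$), so one summand is zero, and since $R/\im\neq 0$ we get $\im=0$. Alternatively, I could invoke \ref{1.125}: injectivity of $R/\im$ there is shown equivalent to $R_\im$ being a field, which for a local ring $R$ means $R$ itself is a field. Either route closes the loop.

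The main obstacle is really just the careful handling of the step "$E_R(k)$ is a $k$-vector space and essential over $k$, hence equals $k$" — one must make sure the essentiality is being used correctly and that no set-theoretic subtlety about injective hulls of possibly infinite-dimensional vector spaces creeps in; but since $k$ is a simple module, its injective hull as an $R/\im$-module is just $k$, and scalar restriction along $R\to R/\im$ of this (already injective, since $R\to R/\im$ is surjective with kernel acting as zero... more precisely by \ref{1.100}/\ref{1.110}-type reasoning or directly) is delicate only in that one should check it is still injective over $R$ — which follows because it is an essential extension of $k$ over $R$ that admits no proper essential $R$-extension, the latter again by the idempotence forcing any $\im$-torsion essential extension to be a $k$-vector space. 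I would streamline this by simply citing \ref{1.125} once $E_R(R/\im)=R/\im$ has been established, so the proof is short. Let me also double-check the trivial implications are wired correctly: (ii)$\Rightarrow$(iii) via \ref{1.150}, (iii)$\Rightarrow$(iv) via \ref{1.30}, and (iv)$\Rightarrow$(i) as above, with (i)$\Rightarrow$(ii) immediate.
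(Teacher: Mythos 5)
Your proposal follows the same route as the paper: reduce everything to (iv)$\Rightarrow$(i) via \ref{1.30} and \ref{1.150}, use idempotency of $\im$ to see that every element of the $\im$-torsion module $E_R(R/\im)$ is killed by $\im$ itself, conclude by essentiality that $E_R(R/\im)=R/\im$ is injective, and finish with \cite[3.72]{lam2} (your \ref{1.125}). The paper phrases the middle step elementwise ($\langle x\rangle_R\cong R/\im$ is simple, so essentiality forces $x\in R/\im$) where you phrase it as a vector-space complement argument; these are the same idea.

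One caveat: your first proposed way to close the loop --- ``if $R/\im$ is injective, the surjection $R\twoheadrightarrow R/\im$ splits'' --- is not justified. Injectivity of a module makes monomorphisms \emph{out of} it split (and projectivity of a quotient makes surjections \emph{onto} it split); a surjection onto an injective module need not split, as $\mathbbm{Z}\rightarrow\Q\rightarrow\Q/\Z\rightarrow 0$ shows. So you must take your second route, citing \ref{1.125} (equivalently \cite[3.72]{lam2}), which is exactly what the paper does; with that choice the proof is complete.
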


\begin{proof}
By \ref{1.30} and \ref{1.150} it suffices to show that (iv) implies (i). Suppose $E_R(R/\im)$ is an $\im$-torsion module and let $x\in E_R(R/\im)\setminus 0$. By idempotency of $\im$ we have $\im x=0$, hence $\langle x\rangle_R\cong R/\im$ is simple. The canonical injection $R/\im\hookrightarrow E_R(R/\im)$ being essential we have $(R/\im)\cap\langle x\rangle_R\neq 0$, hence, by simplicity, $x\in R/\im$. Thus, $R/\im=E_R(R/\im)$ is injective, and therefore $R$ is a field by \cite[3.72]{lam2}.
\end{proof}

Recall that a {\it valuation ring} is a local domain whose set of (principal) ideals is totally ordered with respect to inclusion, and that a {\it Bezout ring} is a domain whose ideals of finite type are principal. A quotient of a valuation ring or of a Bezout ring has the property that the set of its ideals is totally ordered with respect to inclusion or that its ideals of finite type are principal, respectively. Moreover, valuation rings are Bezout rings, and Bezout rings are coherent rings whose local rings are valuation rings (\cite[VI.1.2; VII.1 Exercice 20; VII.2 Exercices 12, 14, 17]{ac}).

\begin{prop}\label{2.20}
Let $K$ be a field, let $Q$ denote the additive monoid of positive rational numbers, let $R\dfgl K[Q]$ denote the algebra of $Q$ over $K$, let $\{e_{\alpha}\mid\alpha\in Q\}$ denote its canonical basis, and let $\im\dfgl\langle e_{\alpha}\mid\alpha>0\rangle_R$ and $\ia\dfgl\langle e_{\alpha}\mid\alpha>1\rangle_R$. Furthermore, set $S\dfgl R_{\im}$, $\inn\dfgl\im_{\im}$, $\ib\dfgl\ia_{\im}$, $T\dfgl S/\ib$ and $\ip\dfgl\inn/\ib$.

a) $R$ is a $1$-dimensional Bezout ring that does not have ITI with respect to its maximal ideal $\im$.\footnote{In \cite[X.1 Exercice 27 f)]{a} the reader gets the cruel task to show that $R$ is a valuation domain -- but its element $1+e_1$ is neither invertible nor contained in $\im$.}

b) $S$ is a $1$-dimensional valuation ring that has ITI with respect to non-maximal ideals, but does not have ITI with respect to its maximal ideal $\inn$.

c) $T$ is a $0$-dimensional non-coherent local ring that has ITI with respect to non-maximal ideals, but does not have ITI with respect to its maximal ideal $\ip$.
\end{prop}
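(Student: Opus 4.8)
The plan is to record first the structural properties of $R$, $S$ and $T$ that feed all three parts, then to prove (b), which is the technical core, and to deduce (a) and (c) from it.

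\emph{Structure.} From $e_{1/n!}=(e_{1/(n+1)!})^{n+1}$ one sees that $R=K[Q]$ is the increasing union of its subrings $R_n\dfgl K[e_{1/n!}]$, each a polynomial ring over $K$ in the indeterminate $e_{1/n!}$; hence $R$ is a filtered union of principal ideal domains, and since a finite subset of $R$ lies in some $R_n$ and generates there a principal ideal whose extension to $R$ is unchanged, $R$ is a Bezout domain. Every nonzero prime of $R$ meets some $R_n$ in a nonzero, hence maximal, prime, so $R$ modulo such a prime is a filtered union of fields and therefore a field; thus every nonzero prime of $R$ is maximal and $\dim R=1$. The augmentation ideal $\im$ is maximal, as $R/\im\cong K$, and idempotent, as $e_\alpha=(e_{\alpha/2})^2$ for $\alpha>0$. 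Consequently $S=R_\im$, being a local ring of the Bezout ring $R$, is a valuation ring; its maximal ideal $\inn=\im S$ is idempotent, and $\inn\neq 0$ since $\dim S=1$, so $S$ is not a field. Writing $\nu$ for the valuation of $S$ normalised by $\nu(e_\alpha)=\alpha$, we get $\ib=\ia S=\{x\in S\mid\nu(x)>1\}$, which is proper, non-maximal and not of finite type; $T=S/\ib$ is local with idempotent maximal ideal $\ip=\inn/\ib$, and $\ip\neq 0$ because $e_{1/2}\notin\ib$; moreover $\inn$ is the only prime of $S$ containing $\ib$, so $\dim T=0$.

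\emph{Parts (b) and (a).} Since $(0)$ is the only non-maximal prime of $S$, the ring $S$ has ITI with respect to every ideal of finite type by \ref{1.180}. For an arbitrary nonzero non-maximal ideal $\id\subsetneq\inn$ and any $t\in\id\setminus 0$: the inclusion $(t)\subseteq\id$ gives $\Gamma_{\id}\subseteq\Gamma_{(t)}$, and choosing $s\in\inn\setminus\id$ forces $\id\subseteq sS$ (ideals of $S$ being totally ordered), so $\id^n\subseteq s^nS\subseteq(t)$ for $n\gg 0$ (as $\nu(s)>0$ and $\nu(t)<\infty$), giving $\Gamma_{(t)}\subseteq\Gamma_{\id}$; hence $\Gamma_{\id}=\Gamma_{(t)}$ and $S$ has ITI with respect to $\id$, while ITI with respect to $(0)$ is trivial. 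By \ref{2.10}, $S$ does not have ITI with respect to $\inn$, being local with idempotent maximal ideal but not a field. This proves (b); and (a) follows, for $R$ is a $1$-dimensional Bezout domain by the first paragraph, and if it had ITI with respect to $\im$ then $S=R_\im$ would have ITI with respect to $\im S=\inn$ by \ref{1.80} --- a contradiction.

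\emph{Part (c).} We saw $T$ is $0$-dimensional and local. For non-coherence, the principal ideal of $T$ generated by the class of $e_{1/2}$ has annihilator $\ic/\ib$ with $\ic\dfgl\{x\in S\mid\nu(x)>1/2\}$; this annihilator is not finitely generated over $T$, since a single generator would force $\ic$ to have an element of least valuation while $\{q\in\Q\mid q>1/2\}$ has no least element, so that finitely generated ideal, a cyclic module with non-finitely-generated annihilator, is not finitely presented. For ITI with respect to a non-maximal ideal $\id/\ib$ of $T$ (so $\ib\subseteq\id\subsetneq\inn$), let $I$ be an injective $T$-module; then $I\cong\hm{S}{S/\ib}{E_S(I\res_S)}$ by \cite[10.1.16]{bs}, whence $\Gamma_{\id/\ib}(I)=\Gamma_{\id}(I\res_S)=\hm{S}{S/\ib}{\Gamma_{\id}(E_S(I\res_S))}$. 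As $\id$ is non-maximal in $S$, part (b) makes $\Gamma_{\id}(E_S(I\res_S))$ injective over $S$, and $\hm{S}{S/\ib}{-}=\hm{S}{T}{-}$ carries injectives to injectives; so $\Gamma_{\id/\ib}(I)$ is injective over $T$, i.e. $T$ has ITI with respect to $\id/\ib$ (that with respect to $(0)$ being trivial). Finally $T$ does not have ITI with respect to $\ip$ by \ref{2.10}, being local with idempotent maximal ideal and $\ip\neq 0$.

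\emph{Main obstacle.} I expect the delicate points to be the structural claim that $R$ is a $1$-dimensional Bezout domain, which rests on the filtered-union presentation, and, in (c), the identification of $\Gamma_{\id/\ib}$ applied to an injective $T$-module with $\hm{S}{T}{-}$ applied to an injective $S$-module --- the bridge through which (b) implies (c).
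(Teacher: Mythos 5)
Your proof is correct, and its backbone coincides with the paper's: all three negative claims ultimately come from the idempotency of $\im$, $\inn$ and $\ip$ together with \ref{2.10}, and the positive claim for $S$ comes from showing that every nonzero non-maximal ideal of $S$ has the same torsion functor as a principal ideal (a cofinality-of-powers argument, which the paper phrases via the function $\alpha(\ic)=\inf\{\alpha\mid e_\alpha\in\ic\}$ and you phrase via the valuation $\nu$) and then invoking \ref{1.180}. You diverge in two places. For a) you pass from $R$ to $S$ by localisation (\ref{1.80}); the paper instead passes to the quotient $T\cong R/\ia$ via \ref{1.30} and \ref{1.40}. Both are immediate given the surrounding machinery. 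For the positive part of c) the paper simply observes that every non-maximal ideal of $T$ is nilpotent (for non-maximal $\ic\subseteq S$ one has $\alpha(\ic)>0$, hence $\ic^n\subseteq\ib$ for $n\gg 0$), so \ref{1.20} A) applies; you instead transport injectivity from $S$ to $T=S/\ib$ through $\hm{S}{T}{\bullet}$ and the isomorphism $E_{S/\ib}(M)\cong\hm{S}{S/\ib}{E_S(M)}$. Your bridge is sound --- $\Gamma_{\id}$ commutes with $(0:_{\bullet}\ib)$ since both are subfunctors of the identity, and $\hm{S}{T}{\bullet}$ preserves injectivity as the right adjoint of the exact restriction functor --- but it is heavier than necessary, and the nilpotency observation would have removed the step you yourself flag as delicate. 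The remaining structural claims (Bezout and one-dimensionality of $R$ via the filtered union of the $K[e_{1/n!}]$, non-coherence of $T$ via a non-finitely-generated annihilator) agree with the paper's up to cosmetic choices: the paper gets $\dim R=1$ from integrality over $K[e_1]$ and uses the annihilator of the class of $e_1$, which is exactly $\ip$, rather than that of $e_{1/2}$.
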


\begin{proof}
First, we note that $\im$ is an idempotent maximal ideal of $R$ and the only prime ideal of $R$ containing $\ia$. Hence, $S$ and $T$ are local rings with idempotent maximal ideals $\inn$ and $\ip$, respectively, and $T\cong(R/\ia)_{\im/\ia}\cong R/\ia$ is $0$-dimensional. So, there is a surjective morphism of rings $R\rightarrow T$, mapping $\im$ onto $\ip$. The ring $R$ is the union of the increasing family $(K[e_{\frac{1}{n!}}])_{n\in\N}$ of principal subrings, hence a Bezout ring, and it is integral over its $1$-dimensional subring $K[e_1]$ by \cite[12.4]{gilmer}, hence $1$-dimensional (cf.~\cite[Example 31]{hutchins}). Therefore, $S$ is a $1$-dimensional valuation ring, and in particular not a field. Moreover, $\ip=(0:_T\frac{e_1}{1}+\ib)$ is not of finite type, hence $T$ is not coherent (\cite[4.60]{lam2}), and in particular not noetherian. Thus, the negative claims about ITI follow from \ref{1.40} and \ref{2.10}.

For an ideal $\ic\subseteq S$ we set $\alpha(\ic)\dfgl\inf\{\alpha\in Q\mid e_{\alpha}\in\ic\}$. We have $\alpha(\inn)=0$, and if $\ic,\id\subseteq S$ are ideals with $\alpha(\ic)>\alpha(\id)$ then $\ic\subseteq\id$. Since ideals of $S$ of finite type are of the form $\langle e_{\alpha}\rangle_S$ with $\alpha\in Q$, it follows that an ideal $\ic\subseteq S$ is of finite type if and only if $e_{\alpha(\ic)}\in\ic$. Now, let $\ic\subseteq S$ be an ideal and let $n\in\N$. If $\alpha\in Q$ with $e_{\alpha}\in\ic^n$, then there are $\alpha_1,\ldots,\alpha_n\in Q$ with $e_{\alpha_1},\ldots,e_{\alpha_n}\in\ic$ and $e_{\alpha}=e_{\alpha_1+\cdots+\alpha_n}$, implying $\alpha=\alpha_1+\cdots+\alpha_n\geq n\alpha(\ic)$. Therefore, $\alpha(\ic^n)\geq n\alpha(\ic)$. Conversely, if $\beta\in Q$ with $\beta>n\alpha(\ic)$, then $\frac{\beta}{n}>\alpha(\ic)$, implying $e_{\frac{\beta}{n}}\in\ic$ and thus $e_{\beta}\in\ic^n$. Therefore, $\alpha(\ic^n)\leq n\alpha(\ic)$. It follows $\alpha(\ic^n)=n\alpha(\ic)$.

Next, let $\ic,\id\subseteq S$ be non-maximal, non-zero ideals and let $n\in\N$, so that $\alpha(\ic^n)\neq 0$ and $\alpha(\id^n)\neq 0$. There exist $k,m\in\N$ with $\alpha(\ic^{nk})=k\alpha(\ic^n)\geq\alpha(\id^n)$ and $\alpha(\id^{nm})=m\alpha(\id^n)\geq\alpha(\ic^n)$, implying $\ic^{nk}\subseteq\id^n$ and $\id^{nm}\subseteq\ic^n$, and therefore $\Gamma_{\ic}=\Gamma_{\id}$ (cf. \cite[2.2]{r-torsion}). It follows that $S$ has ITI with respect to non-maximal ideals if and only if it has ITI with respect to some non-maximal ideal, and by \ref{1.180} this is fulfilled.

Finally, if $\ic\subseteq S$ is a non-maximal ideal then $\alpha(\ic)>0$, hence there exists $n\in\N$ with $\alpha(\ic^n)=n\alpha(\ic)>1$ and therefore $\ic^n\subseteq\ib$. Thus, non-maximal ideals of $T$ are nilpotent, so that $T$ has ITI with respect to non-maximal ideals by \ref{1.20} A).
\end{proof}

We draw some conclusions from these examples. While a $1$-dimensio\-nal local domain has ITI with respect to ideals of finite type (\ref{1.180}), it need not have so with respect to every ideal (\ref{2.20} b)). While a reduced $0$-dimensional ring (i.e., an absolutely flat ring) has ITI with respect to every ideal (\ref{1.120}), an arbitrary $0$-dimensional ring need not have so (\ref{2.20}~c)). John Greenlees conjectured (personal communication, 2010) that a ring with bounded $\ia$-torsion\footnote{A ring $R$ is {\it of bounded $\ia$-torsion} for some ideal $\ia$ if there exists $n\in\N$ with $\Gamma_{\ia}(R)=(0:_R\ia^n)$.} for some ideal $\ia$ has ITI with respect to $\ia$; in particular, domains would have ITI with respect to every ideal. Example \ref{2.20} b) disproves this conjecture.

\begin{cor}\label{2.30}
The polynomial algebra $K[(X_i)_{i\in I}]$ in infinitely ma\-ny indeterminates $(X_i)_{i\in I}$ over a field $K$ does not have ITI with respect to $\langle X_i\mid i\in I\rangle_{K[(X_i)_{i\in I}]}$.
\end{cor}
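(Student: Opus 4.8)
The plan is to reduce the statement about the polynomial ring in infinitely many variables to the one-variable situation already handled in \ref{2.20}, by exhibiting a quotient of $K[(X_i)_{i\in I}]$ that carries a copy of the ring $R=K[Q]$ from \ref{2.20}\,a). Set $P\dfgl K[(X_i)_{i\in I}]$ and $\im\dfgl\langle X_i\mid i\in I\rangle_P$. First I would fix a countably infinite subset $\{X_n\mid n\in\N\}$ of the indeterminates and consider the ideal $\ic\subseteq P$ generated by all $X_i$ with $i$ outside this subset, together with the relations $X_{n+1}^{n+2}=X_n^{n+1}$ for $n\in\N$ (rescaling exponents so that the "lengths'' match up); more precisely, one chooses the relations so that $X_n$ has "length'' $\frac1{n!}$, i.e. $P/\ic$ becomes isomorphic to $K[e_\alpha\mid\alpha\in Q]=R$ via $X_n\mapsto e_{1/n!}$, with $\im/\ic$ corresponding to the idempotent maximal ideal $\langle e_\alpha\mid\alpha>0\rangle_R$. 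This uses that $R$ is the directed union of the principal subalgebras $K[e_{1/n!}]$, as recorded in the proof of \ref{2.20}.

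With such an isomorphism in hand, the key step is to invoke \ref{1.40}. If $P$ had ITI with respect to $\im$, then by \ref{1.30} the injective hull $E_P(P/\im)$ would be an $\im$-torsion module. Since $\ic\subseteq\im$ and $\im(P/\im)=0$, \ref{1.40} would then force $E_{P/\ic}((P/\ic)/(\im/\ic))$ to be an $(\im/\ic)$-torsion module, i.e. $E_R(R/\im_R)$ would be an $\im_R$-torsion module where $\im_R$ denotes the idempotent maximal ideal of $R$. But \ref{2.10} (the equivalence of (i) and (iv) there), together with the fact that $R$ is not a field, says precisely that $E_R(R/\im_R)$ is \emph{not} an $\im_R$-torsion module; this is exactly the content of \ref{2.20}\,a). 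This contradiction shows $P$ does not have ITI with respect to $\im$.

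The main obstacle is the bookkeeping in the first paragraph: one must check that the quotient of $P$ by the chosen monomial relations really is isomorphic as a $K$-algebra to $R=K[Q]$, and not to some larger or smaller ring. The cleanest route is probably not to write explicit relations at all, but to observe that $R$ is generated as a $K$-algebra by the countable family $(e_{1/n!})_{n\in\N}$, hence there is a surjective $K$-algebra homomorphism $K[(X_n)_{n\in\N}]\twoheadrightarrow R$ sending $X_n\mapsto e_{1/n!}$; composing with the projection $P\twoheadrightarrow K[(X_n)_{n\in\N}]$ that kills the remaining indeterminates gives a surjection $\pi\colon P\twoheadrightarrow R$ with $\pi(\im)=\langle e_\alpha\mid\alpha>0\rangle_R=\im_R$. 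Taking $\ic\dfgl\ker\pi$ then makes $R\cong P/\ic$ with $\im/\ic$ mapping onto $\im_R$, and no explicit description of $\ic$ is needed. After that, the argument is entirely formal, chaining \ref{1.30}, \ref{1.40}, \ref{2.10} and \ref{2.20}\,a) as above. One should also remark that the case of a finite index set $I$ is trivially excluded since then $P$ is noetherian and has ITI with respect to every ideal by \ref{1.150}, so "infinitely many'' is used only to guarantee that the countable subfamily exists.
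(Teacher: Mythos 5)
Your overall strategy --- surjecting $P=K[(X_i)_{i\in I}]$ onto $R=K[Q]$ so that $\langle X_i\mid i\in I\rangle_P$ maps onto the idempotent maximal ideal $\im$ of \ref{2.20}, and then chaining \ref{1.30}, \ref{1.40} and \ref{2.10} --- is the right one and is essentially the paper's. Your second paragraph (using the surjection $\pi$ rather than explicit relations) is the clean way to set this up, and the application of \ref{1.40} is correct. The gap is in the last step: \ref{2.10} is a statement about \emph{local} rings with idempotent maximal ideal, and $R=K[Q]$ is not local --- the footnote to \ref{2.20}\,a) points out that $1+e_1$ is neither invertible nor contained in $\im$. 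So you cannot invoke \ref{2.10} for $R$ itself to conclude that $E_R(R/\im)$ fails to be an $\im$-torsion module; nor is that assertion literally ``the content of \ref{2.20}\,a)'', which only says that $R$ lacks ITI with respect to $\im$ --- a weaker statement, since \ref{1.30} is an implication, not an equivalence (cf.\ Question (A) in \ref{1.50}).

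The repair is short, and both standard fixes appear in the paper. Either apply \ref{1.40} a second time to the further surjection $R\twoheadrightarrow R/\ia\cong T$ of \ref{2.20}: your hypothesis would then make $E_T(T/\ip)$ a $\ip$-torsion module, and $T$ \emph{is} a local ring with idempotent maximal ideal which is not a field, so \ref{2.10} yields the contradiction. Or do as the paper's proof does: first use \ref{1.80} to pass from $P$ to its localisation at $\langle X_i\mid i\in I\rangle_P$ (which inherits ITI), then map onto the local ring $S=K[Q]_{\im}$ and run \ref{1.30}, \ref{1.40}, \ref{2.10} there, contradicting \ref{2.20}\,b). With either patch your argument is complete.
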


\begin{proof}
Let $U\dfgl K[(X_i)_{i\in I}]$ and let $\iq\dfgl\langle X_i\mid i\in I\rangle_U$. Assume that $U$ has ITI with respect to $\iq$. Then, $U_\iq$ has ITI with respect to $\iq_\iq$ by \ref{1.80}. Furthermore, in the notations of \ref{2.20}, there is a surjective morphism of rings $U\rightarrow K[Q]$ mapping $\iq$ onto $\im$. This morphism induces a surjective morphism of rings $U_\iq\rightarrow S$ mapping $\iq_\iq$ onto $\inn$. Now, \ref{1.30} and \ref{1.40} imply that $E_S(S/\inn)$ is an $\inn$-torsion module, and thus $S$ has ITI with respect to $\inn$ by \ref{2.10}. This contradicts \ref{2.20} b).
\end{proof}

One may note that $R=K[(X_i)_{i\in\N}]$ is not only a coherent domain, but in fact a filtering inductive limit of noetherian rings with flat monomorphisms (cf. \cite[p. 48]{glaz}), and that its $\langle X_i\mid i\in\N\rangle_R$-adic topology is separated. (In \ref{2.110} we will give a direct proof of the fact that $E_R(R/\langle X_i\mid i\in\N\rangle_R)$ is not a $\langle X_i\mid i\in\N\rangle_R$-torsion module.)

\begin{qu}\label{2.40}
Inspired by \ref{2.30} we ask the following question:
\begin{itemize}
\item[(G)] {\it Do polynomial algebras in (countably) infinitely many indeterminates over fields have ITI with respect to (monomial) ideals of finite type?}\hfill$\bullet$
\end{itemize}
\end{qu}

As an application of \ref{2.10}, we show that ITI is not necessarily preserved by tensor products. In particular, it is not stable under base change.

\begin{prop}\label{2.50}
Let $p$ be a prime number, let $K$ be a non-perfect field of characteristic $p$, and let $L$ be its perfect closure. Then, $L\otimes_KL$ has ITI with respect to ideals of finite type, but does not have ITI with respect to its maximal ideal\/ $\nil(L\otimes_KL)$.\footnote{This is also the standard example showing that noetherianness is not necessarily preserved by tensor products (cf. \cite[I.3.2.5]{ega}).}
\end{prop}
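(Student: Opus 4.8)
The plan is to prove the two assertions separately, writing $R\dfgl L\otimes_K L$ throughout.

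\emph{The finite-type part.} The strategy is to reduce to \ref{1.20} B), so it suffices to show that $R$ is a $0$-dimensional local ring. For each $x\in L$ consider $t_x\dfgl x\otimes 1-1\otimes x\in R$; since $L$ is the perfect closure of $K$ there is $n\in\N$ with $x^{p^n}\in K$, and then $t_x^{p^n}=x^{p^n}\otimes 1-1\otimes x^{p^n}=0$, so $t_x$ is nilpotent. Consequently the ideal $\im\dfgl\langle t_x\mid x\in L\rangle_R$ is contained in $\nil(R)$. On the other hand, $R$ is generated as a ring by the images of $L\otimes 1$ and of $1\otimes L$, and modulo $\im$ these two images coincide; hence $R/\im$ is a field, canonically isomorphic to $L$. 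Thus $\im$ is maximal and $\nil(R)\subseteq\im$, so $\im=\nil(R)$, and being maximal it is the unique prime of $R$. Therefore $R$ is local and $0$-dimensional with maximal ideal $\nil(R)=\im$, and \ref{1.20} B) applies.

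\emph{The negative part.} Here the plan is to invoke \ref{2.10} for the local ring $R$ with idempotent maximal ideal: it remains to verify (a) that $\im$ is idempotent and (b) that $R$ is not a field. For (b): since $K$ is non-perfect there is $a\in K$ with no $p$-th root in $K$, so $a^{1/p}\in L\setminus K$ and $t_{a^{1/p}}$ is a nonzero nilpotent; thus $R$ is not reduced. For (a): by the first part $\im=\langle t_x\mid x\in L\rangle_R$, and since $L$ is perfect we may form $x^{1/p}\in L$ and compute, in characteristic $p$, $t_x=x\otimes 1-1\otimes x=(x^{1/p}\otimes 1-1\otimes x^{1/p})^p=t_{x^{1/p}}^{\,p}$. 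As $p\geq 2$, this gives $t_x\in\im^2$; hence every generator of $\im$ lies in $\im^2$, so $\im=\im^2$ is idempotent. Now \ref{2.10} yields that $R$ does not have ITI with respect to $\im=\nil(R)$.

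\emph{Main obstacle.} The dimension-and-locality bookkeeping and the identity $R/\im\cong L$ are routine. The one point requiring care is the idempotency argument in (a): one must know that $\im$ is generated precisely by the elements $t_x$ (which is exactly the equality $\im=\nil(R)=\langle t_x\mid x\in L\rangle_R$ obtained in the first part, and not to be taken for granted) and that the Frobenius identity $t_x=t_{x^{1/p}}^{\,p}$ is legitimate — this is where perfectness of $L$ is genuinely used, and it is the heart of the matter. Beyond this, no deeper difficulty is anticipated.
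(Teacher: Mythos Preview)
Your proof is correct and follows essentially the same route as the paper: both reduce the first claim to \ref{1.20}~B) (the paper by citing \cite[I.3.2.5]{ega} for the fact that $R$ is $0$-dimensional local and non-noetherian, you by exhibiting the generators $t_x$ and arguing directly) and the second to \ref{2.10} via idempotency of the maximal ideal. The paper's idempotency argument is marginally slicker---it shows that \emph{any} $f=\sum x_i\otimes y_i\in\nil(R)$ equals $(\sum x_i^{1/p}\otimes y_i^{1/p})^p$, so no identification of generators is needed---but the underlying idea (perfectness of $L$ plus the Frobenius identity in characteristic $p$) is identical to yours.
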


\begin{proof}
It is well-known that $L\otimes_KL$ is a non-noetherian $0$-dimensional local ring with maximal ideal $\nil(L\otimes_KL)$ (\cite[I.3.2.5]{ega}). So, the first claim is clear by \ref{1.20} B), and by \ref{2.10}, it suffices to show that $\nil(L\otimes_KL)$ is idempotent. Let $f\in\nil(L\otimes_KL)\setminus 0$. There exist $m\in\N$ and families $(x_i)_{i=0}^m$ and $(y_i)_{i=0}^m$ in $L\setminus 0$ with $f=\sum_{i=0}^mx_i\otimes y_i$. Since $L$ is perfect, there exist for every $i\in[0,m]$ elements $v_i,w_i\in L$ with $v_i^p=x_i$ and $w_i^p=y_i$. It follows $f=\sum_{i=0}^mv_i^p\otimes w_i^p=(\sum_{i=0}^mv_i\otimes w_i)^p$ and therefore $f\in\nil(L\otimes_KL)^p$ as desired.
\end{proof}

Up to now, we saw only rings without ITI with respect to ideals not of finite type. The second bunch of examples relies on an observation about valuation rings with maximal ideal of finite type (\ref{2.80}). 

\begin{lemma}\label{2.60}
Let $\ia,\ib\subseteq R$ be ideals and suppose the canonical injection $\ib\hookrightarrow R$ is essential. Then, $E_R(\ib)$ is an $\ia$-torsion module if and only if\/ $\ia$ is nilpotent.
\end{lemma}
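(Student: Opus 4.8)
The plan is to prove both implications of the biconditional in Lemma \ref{2.60}, using the hypothesis that $\ib \hookrightarrow R$ is essential, which forces $E_R(\ib) = E_R(R)$.

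First I would observe that since $\ib \hookrightarrow R$ is essential, the composite $\ib \hookrightarrow R \hookrightarrow E_R(R)$ realizes $E_R(R)$ as an injective hull of $\ib$, so $E_R(\ib) \cong E_R(R)$ and we may identify the two. Thus the statement becomes: $E_R(R)$ is an $\ia$-torsion module if and only if $\ia$ is nilpotent.

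For the direction ``$\ia$ nilpotent $\Rightarrow$ $E_R(R)$ is $\ia$-torsion'': this is just Example \ref{1.20} A), which says every ring has ITI with respect to nilpotent ideals; more directly, if $\ia^n = 0$ then $\ia^n M = 0$ for every $R$-module $M$, so in particular every module is $\ia$-torsion. For the converse ``$E_R(R)$ is $\ia$-torsion $\Rightarrow$ $\ia$ nilpotent'': the key point is that $R$ embeds in $E_R(R)$, so if $E_R(R) = \Gamma_{\ia}(E_R(R))$ then in particular $1 \in E_R(R)$ is killed by some power of $\ia$, i.e. there exists $n \in \N$ with $\ia^n \cdot 1 = 0$, which means $\ia^n = 0$.

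The main (and really only) subtlety is making sure the essentialness hypothesis is used correctly to pass from $E_R(\ib)$ to $E_R(R)$ — that $R \hookrightarrow E_R(R)$ composed with $\ib \hookrightarrow R$ is essential precisely because $\ib \hookrightarrow R$ is, so that $E_R(\ib)$ can be taken to be $E_R(R)$ (uniqueness of injective hulls up to isomorphism over $\ib$). Everything else is a one-line argument; I expect no genuine obstacle here. I would write the proof in three short sentences: identify $E_R(\ib)$ with $E_R(R)$ via essentialness, note the easy implication from $\ia^n = 0$, and extract $\ia^n = 0$ from $\ia^n \cdot 1 = 0$ for the reverse implication.
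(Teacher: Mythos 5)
Your proposal is correct and follows essentially the same route as the paper: identify $E_R(\ib)$ with $E_R(R)$ via essentialness, deduce nilpotency of $\ia$ from the fact that $R$ (hence $1$) sits inside the torsion module, and note the converse is immediate. No issues.
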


\begin{proof}
As $\ib\hookrightarrow R$ is essential, $R\subseteq E_R(R)=E_R(\ib)$. If $E_R(\ib)$ is a $\ia$-torsion module, then so is $R$, hence $\ia$ is nilpotent. The converse is clear.
\end{proof}

\begin{cor}\label{2.70}
Let $R$ be a quotient of a valuation ring, let $x\in R\setminus 0$, and let $\ia\subseteq R$ be an ideal. Then, $E_R(\langle x\rangle_R)$ is an $\ia$-torsion module if and only if $\ia$ is nilpotent.
\end{cor}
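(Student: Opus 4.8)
The plan is to reduce Corollary \ref{2.70} to Lemma \ref{2.60} by showing that, in a quotient of a valuation ring, the canonical injection $\langle x\rangle_R\hookrightarrow R$ is essential whenever $x\neq 0$. Once that is established, the statement follows immediately: Lemma \ref{2.60} applied to $\ib=\langle x\rangle_R$ gives exactly that $E_R(\langle x\rangle_R)$ is an $\ia$-torsion module if and only if $\ia$ is nilpotent.

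So the real content is the essentiality claim. First I would recall that $R$ is a quotient of a valuation ring, hence (as noted in the text preceding \ref{2.20}) the set of ideals of $R$ is totally ordered by inclusion; in particular the set of principal ideals is totally ordered. To check that $\langle x\rangle_R\hookrightarrow R$ is essential, I take an arbitrary nonzero ideal $\ib\subseteq R$ and must show $\ib\cap\langle x\rangle_R\neq 0$. Pick $y\in\ib\setminus 0$. By total order of principal ideals, either $\langle x\rangle_R\subseteq\langle y\rangle_R$ or $\langle y\rangle_R\subseteq\langle x\rangle_R$. In the first case $x\in\langle y\rangle_R\subseteq\ib$, so $x$ itself is a nonzero element of $\ib\cap\langle x\rangle_R$. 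In the second case $y\in\langle x\rangle_R$, so $y$ is a nonzero element of $\ib\cap\langle x\rangle_R$. Either way the intersection is nonzero, proving essentiality.

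The main (and essentially only) obstacle is making sure the total-order property of principal ideals is legitimately available for an arbitrary quotient of a valuation ring, not just for a valuation ring itself; but this is exactly what was recorded in the discussion before \ref{2.20} (``a quotient of a valuation ring $\ldots$ has the property that the set of its ideals is totally ordered with respect to inclusion''), so I may invoke it directly. No delicate calculation is needed.

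\begin{proof}
The set of ideals of $R$ is totally ordered with respect to inclusion, as $R$ is a quotient of a valuation ring. In particular, if $y\in R\setminus 0$, then $\langle x\rangle_R\subseteq\langle y\rangle_R$ or $\langle y\rangle_R\subseteq\langle x\rangle_R$; in the first case $x\in\langle y\rangle_R$, in the second $y\in\langle x\rangle_R$, so in both cases $\langle x\rangle_R\cap\langle y\rangle_R\neq 0$. Hence the canonical injection $\langle x\rangle_R\hookrightarrow R$ is essential, and the claim follows from \ref{2.60}.
\end{proof}
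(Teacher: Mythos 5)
Your proof is correct and follows exactly the paper's route: total ordering of (principal) ideals in a quotient of a valuation ring gives essentiality of $\langle x\rangle_R\hookrightarrow R$, and then \ref{2.60} finishes the argument. You merely spell out the essentiality check that the paper leaves implicit.
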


\begin{proof}
The set of principal ideals of $R$ is totally ordered with respect to inclusion, hence the canonical injection $\langle x\rangle_R\hookrightarrow R$ is essential, and thus \ref{2.60} yields the claim.
\end{proof}

\begin{prop}\label{2.80}
If $R$ is a valuation ring whose maximal ideal $\im$ is of finite type, then the following statements are equivalent: (i) $R$ is noetherian; (ii) $R$ has ITI with respect to $\im$; (iii) $E_R(R/\im)$ is an $\im$-torsion module.
\end{prop}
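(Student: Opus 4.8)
The plan is to close the cycle (i) $\Rightarrow$ (ii) $\Rightarrow$ (iii) $\Rightarrow$ (i). The first implication is \ref{1.150} and the second is \ref{1.30} (with $\ia=\im$), so the entire content sits in ``(iii) $\Rightarrow$ (i)'', which is what I would focus on. At the outset I would record that, $R$ being a valuation ring (hence a Bezout ring) and $\im$ being of finite type, $\im$ is principal; fix $x\in R$ with $\im=\langle x\rangle_R$. I would also fix $E\dfgl E_R(R/\im)$ and let $b\in E$ be the image of $1+\im$ under the (monomorphic) canonical map $R/\im\to E$.

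The crucial step is to show that (iii) forces $\bigcap_{n\in\N}\im^n=0$. Here I would use that an injective module over a domain is divisible, so that for every nonzero $w\in R$ there is some $y\in E$ with $wy=b$. If there were a nonzero $w\in\bigcap_{n\in\N}\im^n$, then, choosing such a $y$ and using $E=\Gamma_\im(E)$ to find $N\in\N$ with $\im^Ny=0$, the relation $w\in\im^N$ would yield $b=wy=0$, hence $1+\im=0$ in $R/\im$ -- impossible since $\im$ is proper. So $\bigcap_{n\in\N}\im^n=0$.

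From this I would finish by showing $R$ is noetherian. For nonzero $a\in R$, the set $\{n\in\N\mid a\in\langle x^n\rangle_R\}$ is nonempty (it contains $0$), downward closed, and finite (an infinite such set would equal $\N$, forcing $a\in\bigcap_n\langle x^n\rangle_R=0$); let $n(a)$ be its maximum and write $a=x^{n(a)}u$. Then $u\notin\im$ (otherwise $a\in\langle x^{n(a)+1}\rangle_R$), hence $u$ is a unit and $\langle a\rangle_R=\langle x^{n(a)}\rangle_R$. Consequently every nonzero ideal $\ia\subseteq R$ equals $\bigcup_{0\neq a\in\ia}\langle x^{n(a)}\rangle_R=\langle x^m\rangle_R$ with $m\dfgl\min\{n(a)\mid a\in\ia\setminus 0\}$, so every ideal of $R$ is principal and $R$ is noetherian.

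I expect the crucial step to be the only real obstacle: the point is to choose $w$ \emph{deep} inside $\bigcap_n\im^n$ and ``divide $b$ by $w$'' inside the divisible module $E$, so that the infinite $x$-divisibility of $w$ collides head-on with $E$ being $\im$-torsion. Everything afterwards is routine valuation-ring algebra (in effect: a valuation ring with principal maximal ideal and $\bigcap_n\im^n=0$ is a discrete valuation ring or a field). A variant of the crucial step would be to check that the inclusion $R/\im\cong\frac{1}{x}R/R\hookrightarrow Q/R$ is essential (with $Q=\mathrm{Frac}(R)$), whence $Q/R\subseteq E$ and $\frac{1}{w}+R\in E$ is annihilated by no power of $x$ for $0\neq w\in\bigcap_n\im^n$; but that demands an extra essentiality computation which the divisibility argument avoids.
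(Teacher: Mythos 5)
Your proof is correct, and your route through ``(iii) $\Rightarrow$ (i)'' is genuinely different from the paper's. The paper argues by contraposition: assuming $R$ non-noetherian, it invokes Bourbaki's criterion that a valuation ring with finitely generated maximal ideal is noetherian iff it is $\im$-adically separated to produce a nonzero element $x$ of $\bigcap_{n\in\N}\im^n$, passes to the quotient $\overline{R}=R/x\im$, shows $\overline{\im}$ is not nilpotent via Nakayama, identifies $\overline{R}/\overline{\im}$ with the principal ideal $\langle\overline{x}\rangle_{\overline{R}}$, and then contradicts (iii) using \ref{1.40} and \ref{2.70} (essentiality of principal ideals in quotients of valuation rings). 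You instead extract separatedness directly from (iii): dividing the socle generator $b$ by a hypothetical nonzero $w\in\bigcap_{n\in\N}\im^n$ inside the divisible module $E_R(R/\im)$ collides head-on with $\im$-torsionness, since the resulting $y$ with $wy=b$ is killed by some $\im^N\ni w$, forcing $b=0$. You then prove by hand that a local domain with principal maximal ideal and separated adic topology has all ideals principal, hence is noetherian; all the steps there (maximality of $n(a)$, $u$ a unit, the union being $\langle x^m\rangle_R$) check out. Your argument is self-contained -- no appeal to \ref{1.40}, \ref{2.70}, Nakayama, or the Bourbaki reference -- and in fact proves slightly more, since the valuation hypothesis is used only to get $\im$ principal from ``of finite type''. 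What the paper's route buys is the reuse of the machinery (\ref{2.60}, \ref{2.70}, \ref{1.40}) already set up for the other examples of Section 2; yours is arguably the more economical argument for this particular proposition.
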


\begin{proof}
By \ref{1.30} and \ref{1.150}, it suffices to show that (iii) implies (i). Suppose (iii) holds and assume $R$ is non-noetherian. By \cite[VI.3.6 Proposition 9]{ac}, a valuation ring with maximal ideal $\im$ of finite type is noetherian if and only if its $\im$-adic topology is separated. Hence, there exists $x\in(\bigcap_{n\in\N}\im^n)\setminus 0$. We consider the local ring $\overline{R}\dfgl R/x\im$, its maximal ideal $\overline{\im}\dfgl\im/x\im$, and $\overline{x}\dfgl x+x\im\in\overline{R}$. If there exists $n\in\N$ with $\overline{\im}^n=0$, then $\im^n\subseteq x\im\subseteq\im^{n+1}$, and Nakayama's Lemma yields the contradiction $\im^n=0$. As $(0:_{\overline{R}}\overline{x})=\overline{\im}$ and therefore $\overline{R}/\overline{\im}\cong\langle\overline{x}\rangle_{\overline{R}}$, we get a contradiction from \ref{1.40} and \ref{2.70}.
\end{proof}

\begin{prop}\label{2.90}
Let $p$ be a prime number, let $R\dfgl\Z[(X_i)_{i\in\N}]$ be the polynomial algebra in indeterminates $(X_i)_{i\in\N}$ over $\Z$, let $\ia\dfgl\langle p^{j-i}X_j-X_i\mid i,j\in\N, i<j\rangle_R$, let $\im\dfgl\langle p\rangle_R+\langle X_i\mid i\in\N\rangle_R$, let $S\dfgl R/\ia$, and let $\inn\dfgl\im/\ia$. Furthermore, denote by $Y_i$ the canonical image of $X_i$ in $S$ for $i\in\N$, let $T\dfgl S_{\inn}/\langle pY_0\rangle_{S_{\inn}}$, and let $\ip\dfgl\inn_{\inn}/\langle pY_0\rangle_{S_{\inn}}$.

a) $S_{\inn}$ is a $2$-dimensional valuation ring that does not have ITI with respect to its principal maximal ideal $\inn_{\inn}$.

b) $T$ is a $1$-dimensional local ring that does not have ITI with respect to its principal maximal ideal $\ip$.
\end{prop}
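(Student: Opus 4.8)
The plan is to first replace $S$ by a transparent model. I would send $X_i\mapsto Z/p^i$, where $Z$ is a single indeterminate over $\Q$: this annihilates every generator $p^{j-i}X_j-X_i$ of $\ia$, hence defines a ring homomorphism $S\to\Q(Z)$, and since modulo $\ia$ any polynomial in finitely many of the $X_i$ becomes a polynomial in a single $X_N$ (because $X_i=p^{N-i}X_N$ there) while $Z/p^N$ is transcendental over $\Q$, this homomorphism is injective. This identifies $S$ with the subring $\Z[Z/p^n\mid n\in\N]$ of $\Q(Z)$ and $Y_i$ with $Z/p^i$. I would immediately extract two facts: $S$ is a domain with field of fractions $\Q(Z)$; and, since $Z/p^i=p\cdot(Z/p^{i+1})$, the ideal $\inn$ equals $pS$, so that $\inn_\inn=pS_\inn$ is principal. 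I would also record the little fact that will do all the real work below: $Z^k/p^e=(Z/p^e)\cdot Z^{k-1}\in S$ for all $k\ge1$ and $e\ge0$.

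For part a), the aim is to identify $S_\inn$ with a valuation ring and then quote \ref{2.80}. I claim $S_\inn$ equals the subring $V\dfgl\Z_{(p)}+Z\,\Q[Z]_{(Z)}$ of $\Q(Z)$, which is the valuation ring of the valuation of $\Q(Z)$ obtained by composing its $Z$-adic valuation with the $p$-adic valuation of the residue field $\Q$ (its value group is $\Z\times\Z$ with the lexicographic order, so it has rank $2$). The inclusion $S_\inn\subseteq V$ is straightforward: the generators $Z/p^n$ and the integers lie in $V$, and any element of $S\setminus\inn$ has constant term a $p$-adic unit of $\Z$, hence is a unit of $V$. For $V\subseteq S_\inn$ I would write $f\in V$ as $c+Zg$ with $c\in\Z_{(p)}$ and $g=u/v$, $u,v\in\Z[Z]$, and then divide both $u$ and $v$ by the exact power $p^e$ of $p$ dividing $v(0)$, so that $v/p^e\in S$ has constant term prime to $p$ (hence is invertible in $S_\inn$) while $Zu/p^e\in S$ by the little fact; this gives $f\in S_\inn$. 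Thus $S_\inn=V$ is a valuation ring of rank, hence dimension, $2$, its maximal ideal $\inn_\inn=pS_\inn$ is of finite type, and it is not noetherian (a noetherian valuation ring being a discrete valuation ring or a field). By \ref{2.80}, $S_\inn$ does not have ITI with respect to $\inn_\inn$.

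For part b), I would first observe $pY_0=pZ$ and that, $S_\inn=V$ being a $2$-dimensional valuation ring, its prime ideals form the chain $0\subsetneq\iq\subsetneq\inn_\inn$ with $\iq=Z\,\Q[Z]_{(Z)}$. Since $0\ne pZ\in\iq$ and $p\in\inn_\inn\setminus\iq$, the ring $T=S_\inn/\langle pZ\rangle_{S_\inn}$ has exactly the two prime ideals $\iq/\langle pZ\rangle_{S_\inn}\subsetneq\ip$; so $T$ is a $1$-dimensional local ring with principal maximal ideal $\ip$, generated by the image of $p$. Now let $x\in T$ be the image of $Z$: it is nonzero because $Z\notin\langle pZ\rangle_{S_\inn}$ (otherwise $1/p\in S_\inn\subseteq\Q[Z]_{(Z)}$), and, as $S_\inn$ is a domain, $sZ\in\langle pZ\rangle_{S_\inn}\iff s\in pS_\inn=\inn_\inn$, so $(0:_Tx)=\ip$ and $\langle x\rangle_T\cong T/\ip$. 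Moreover $\ip$ is not nilpotent, since $p^n\in\langle pZ\rangle_{S_\inn}$ would force $p^{n-1}/Z\in S_\inn\subseteq\Q[Z]_{(Z)}$. As $T$ is a quotient of the valuation ring $S_\inn$, \ref{2.70} then yields that $E_T(\langle x\rangle_T)=E_T(T/\ip)$ is $\ip$-torsion if and only if $\ip$ is nilpotent; it is not, so $E_T(T/\ip)$ is not $\ip$-torsion, and \ref{1.30} shows that $T$ does not have ITI with respect to $\ip$.

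The hard part — really the only step that is not routine — will be the identification $S_\inn=\Z_{(p)}+Z\,\Q[Z]_{(Z)}$, and within it the inclusion $\supseteq$, which rests entirely on the observation that $Z^k/p^e\in S$ for every $k\ge1$; granting that, both a) and b) reduce to elementary prime-ideal bookkeeping together with the cited results \ref{2.80}, \ref{2.70} and \ref{1.30}.
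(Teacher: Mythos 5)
Your argument is correct, and it reaches the conclusion by a genuinely different route from the paper. The paper works entirely inside $S=R/\ia$: it shows by hand that any two elements of $S_{\inn}$ are comparable under divisibility, proves that $S$ is a domain via the claim $aX_0^n\notin\ia$ (reduced to an identity in $\Z[U,V]$ after factoring out most relations), determines $\spec(S_{\inn})=\{0,\ic_{\inn},\inn_{\inn}\}$ explicitly, and gets non-noetherianness from $0\neq pY_0\in\bigcap_{n\in\N}\inn_{\inn}^n$. You instead realise $S$ concretely as $\Z[Z/p^n\mid n\in\N]\subseteq\Q(Z)$ via $X_i\mapsto Z/p^i$ (the injectivity argument -- reduce modulo $\ia$ to a one-variable polynomial in $X_N$ and use transcendence of $Z/p^N$ -- is sound), and then identify $S_{\inn}$ with the composite valuation ring $\Z_{\langle p\rangle}+Z\,\Q[Z]_{(Z)}$. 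This makes the domain property, the valuation property, the rank (hence the dimension $2$), the non-noetherianness, and the prime chain $0\subsetneq Z\,\Q[Z]_{(Z)}\subsetneq\inn_{\inn}$ all immediate from standard facts about composed valuations, at the price of the two inclusions $S_{\inn}\subseteq V$ and $V\subseteq S_{\inn}$; your verification of the harder inclusion (clearing the denominator $v$ down to $v/p^e\in S\setminus\inn$ and using $Z^k/p^e\in S$ for $k\geq 1$) is complete. The endgame is identical to the paper's: \ref{2.80} for a), and $\ip=(0:_TZ)$, $T/\ip\cong\langle Z\rangle_T$, $\ip$ not nilpotent, together with \ref{2.70} and \ref{1.30} for b). In short, your model buys conceptual transparency where the paper's intrinsic computation buys self-containedness; both are valid.
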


\begin{proof}
We claim first that if $i,m,n,r,s\in\N$, then $p^mY_i^r$ and $p^nY_i^s$ are comparable with respect to divisibility in $S$. Indeed, without loss of generality, we can suppose $r<s$, so if $m\leq n$, then the claim is clear. Otherwise, $p^nY_i^s=p^mY_i^rY_i^{s-1-r}Y_{i+m-n}$ and thus the claim holds. Keeping in mind the definition of $\ia$, we see that if $b\in\N$ and $f\in S$, then there exist $i\in\N$ with $i\geq b$ and a finite family $(a_k)_{k=0}^r$ in $\Z$ with $f=\sum_{k=0}^ra_kY_i^k$. In particular, if $f,g\in S$, then there exist $i\in\N$ and finite families $(a_k)_{k=0}^r$ and $(b_k)_{k=0}^s$ in $\Z$ with $f=\sum_{k=0}^ra_kY_i^k$ and $g=\sum_{k=0}^sb_kY_i^k$. Moreover, for $f=\sum_{k=0}^ra_kp^{n_k}Y_i^k\in S\setminus 0$ with $i\in\N$ and finite families $(n_k)_{k=0}^r$ in $\N$ and $(a_k)_{k=0}^r$ in $\Z\setminus\langle p\rangle$ there exists $k_0\dfgl$\linebreak$\min\{k\in[0,r]\mid a_k\neq 0\}$, and $p^{n_{k_0}}Y_i^{k_0}$ divides $p^{n_k}Y_i^k$ for every $k\in[0,r]$ with $a_k\neq 0$. In particular, for $f\in S_{\inn}\setminus 0$ there exist a unit $u$ of $S_{\inn}$ and $i,k,n\in\N$ with $f=up^nY_i^k$, and we have $f\in\inn_{\inn}$ if and only if $(n,k)\neq(0,0)$. It follows that any two elements of $S_{\inn}$ are comparable with respect to divisibility.

Next, we show that if $a\in\Z\setminus 0$ and $n\in\N$, then $aX_0^n\notin\ia$. Assume $aX_0^n\in\ia$. There exists $m\in\N$ with $aX_0^n\in\ia'\dfgl\langle p^{j-i}X_j-X_i\mid i,j\in[0,m], i<j\rangle_{R'}$, where $R'\dfgl\Z[(X_i)_{i=0}^m]$. If $i,j\in[0,m]$ with $i<j$, then $p^{j-i}X_j-X_i=(p^{m-i}X_m-X_i)-p^{j-i}(p^{m-j}X_m-X_j)$, hence $\ia'=\langle p^{m-i}X_m-X_i\mid i\in[0,m-1]\rangle_{R'}$. Factoring out $\langle p^{m-i}X_m-X_i\mid i\in[1,m-1]\rangle_{R'}$ we get $aU^n=f(p^mV-U)$ in the polynomial algebra $\Z[U,V]$ with $f\in\Z[U,V]\setminus 0$, a contradiction, and thus our claim. From this we get $pX_0\notin\ia$ and thus $\ic\dfgl\langle X_i\mid i\in\N\rangle_R/\ia\neq 0$. Moreover, it also follows that $S$ is a domain. Indeed, if $f,g\in S\setminus 0$ with $fg=0$, then there exist $i,k,l,m,n\in\N$, $a,b\in\Z\setminus\langle p\rangle$, and $f',g'\in\ic$ with $f=p^mY_i^k(a+f')$ and $g=p^nY_i^l(b+g')$. Furnishing $R$ with its canonical $\Z$-graduation and $S$ with the induced $\Z$-graduation, we get $abp^{m+n}Y_i^{k+l}=0$, hence $abp^rX_0^s\in\ia$ for some $r,s\in\N$, and finally the contradiction $ab=0$. So, $S$ is a domain, and hence $S_{\inn}$ is a valuation ring.

If $i\in\N$ then $Y_i=pY_{i+1}$, hence $p$ divides $Y_i$ in $S$, and therefore $\inn_{\inn}=$\linebreak$\langle p\rangle_{S_{\inn}}+\langle Y_i\mid i\in\N\rangle_{S_{\inn}}=\langle p\rangle_{S_{\inn}}$ is principal. Thus, $\ip=\langle p\rangle_T$ is principal, too. Now, let $\iq\in\spec(S_{\inn})\setminus\{0,\inn_{\inn}\}$, so that $p\notin\iq$. Then, there exists an $i\in\N$ with $Y_i\in\iq$, since elements of $\inn_{\inn}$ are of the form $up^nY_i^k$ with a unit $u$ of $S_{\inn}$, $i,k,n\in\N$ and $(n,k)\neq(0,0)$. This implies $p^nY_{i+n}=Y_i\in\iq$ and thus $Y_{i+n}\in\iq$ for every $n\in\N$, and also $Y_{i-k}=p^kY_i\in\iq$ for every $k\in[0,i]$. It follows $\iq=\ic_\inn$, hence $\spec(S_{\inn})=\{0,\ic_{\inn},\inn_{\inn}\}$, and therefore $\dim(S_{\inn})=2$. Furthermore, if $i,n\in\N$ then $Y_i=p^nY_{i+n}$, hence $p^n$ divides $Y_i$ in $S$, and therefore $0\neq pY_0\in\ic_{\inn}\subseteq\bigcap_{n\in\N}\inn_{\inn}^n$. Thus, $T$ is $1$-dimensional and $S_{\inn}$ is non-noetherian, so a) follows from \ref{2.80}.

If $n\in\N^*$, then $p^n$ divides $Y_0=p^nY_n$, so if $pY_0$ divides $p^n$, then $Y_n$ is a unit of $S_{\inn}$, which is a contradiction. Therefore, $\langle pY_0\rangle_{S_{\inn}}$ contains no power of $p$, and hence $\ip$ is not nilpotent. Moreover, $Y_0\notin\langle pY_0\rangle_{S_{\inn}}$, so denoting by $Z$ the canonical image of $Y_0$ in $T$ it follows $\ip=(0:_TZ)$, hence $T/\ip\cong\langle Z\rangle_T$, and thus \ref{1.30} and \ref{2.70} imply that $T$ does not have ITI with respect to $\ip$.
\end{proof}

Consider again the ring $T$ from \ref{2.90}. By \ref{1.160} there exists a $T$-module $M$ with $\assf_T(M)\neq\assf_T(E_T(M))$, and we can explicitly describe such a module. Indeed, $T$ has precisely one non-maximal prime and $E_T(T/\ip)$ is not a $\ip$-torsion module, so \ref{1.130} implies $\{\ip\}=\assf_T(T/\ip)\subsetneqq\assf_T(E_T(T/\ip))=\spec(T)$.\smallskip

We draw from the examples in \ref{2.20} and \ref{2.90} the conclusion that while  a $1$-dimensio\-nal local domain has ITI with respect to ideals of finite type (\ref{1.180}), an arbitrary  $1$-dimensional local ring need not have so. Further examples of non-noetherian valuation rings whose maximal ideal is of finite type can be found in \cite[p.~79, Remark]{mat} and \cite[Example 32]{hutchins}.\smallskip

Based on \ref{2.90} we show now that the idealisation of a module over a noetherian ring need not have ITI. (Recall that for a ring $R$ and an $R$-module $M$, the idealisation of $M$ is the $R$-algebra with underlying $R$-module $R\oplus M$ and with multiplication defined by $(r,x)\cdot(s,y)=(rs,ry+sx)$.)

\begin{prop}\label{2.100}
We use the notations from \ref{2.90} and denote by $U$ the idealisation of the $\Z_{\langle p\rangle}$-module obtained from $\langle Y_i\mid i\in\N\rangle_{S_{\inn}}/\langle pY_0\rangle_{S_{\inn}}$ by scalar restriction. Then, $U$ is a local ring that does not have ITI with respect to its principal maximal ideal.
\end{prop}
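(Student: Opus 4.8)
The plan is to exhibit a surjective ring morphism from $U$ onto a quotient that is a quotient of a valuation ring, and to which Corollary \ref{2.70} applies, so that the failure of ITI can be read off from \ref{1.30} and \ref{1.40} exactly as in the proofs of \ref{2.90}. Write $B\dfgl\Z_{\langle p\rangle}$ and $N\dfgl\langle Y_i\mid i\in\N\rangle_{S_{\inn}}/\langle pY_0\rangle_{S_{\inn}}$, regarded as a $B$-module by scalar restriction, so $U=B\oplus N$ with the idealisation multiplication. The maximal ideal of $U$ is $\iq\dfgl pB\oplus N$, which is principal: since every $Y_i$ equals $pY_{i+1}$ in $S_{\inn}$, the module $N$ is generated over $B$ by $p\cdot N$ together with the image $Z$ of $Y_0$, and in fact $N$ is $p$-divisible in the relevant sense, so that $\iq=\langle(p,0)\rangle_U$; the element $(p,0)$ is neither a unit nor a zero-divisor issue is handled as in \ref{2.90}a. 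First I would verify carefully that $\iq=\langle(p,0)\rangle_U$ and that $U$ is local with maximal ideal $\iq$ (the non-units are precisely the pairs $(b,x)$ with $b\in pB$).

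Next I would identify a suitable quotient. The element $(0,Z)\in U$ satisfies $(0,Z)^2=0$, and one computes $(0:_U(0,Z))$. Since $\ip=(0:_TZ)$ in the notation of \ref{2.90} (with $T=S_{\inn}/\langle pY_0\rangle_{S_{\inn}}$), and since in the idealisation $(b,x)\cdot(0,Z)=(0,bZ)$, we get $(0:_U(0,Z))=\{(b,x)\mid bZ=0 \text{ in } T\}=(0:_TZ)\oplus N=\ip\oplus N$, which is exactly $\iq$ as $\ip=pB$-image. Hence $U/(0:_U(0,Z))\cong\langle(0,Z)\rangle_U\cong U/\iq\cong B/pB$, a field. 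This gives the isomorphism $U/\iq\cong\langle(0,Z)\rangle_U$ needed to invoke \ref{2.70} — provided we can realise $U$, or an appropriate quotient of it, as a quotient of a valuation ring so that principal ideals are totally ordered and $\langle(0,Z)\rangle_U\hookrightarrow U$ is essential.

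The main obstacle, and the step needing the most care, is establishing the essentiality of $\langle(0,Z)\rangle_U\hookrightarrow U$, equivalently that $U$ (or the quotient to which we reduce) is a quotient of a valuation ring — idealisations are never domains, so \ref{2.70} cannot be applied to $U$ directly but only after passing to a quotient. The natural candidate is to check that $U$ maps onto $T$ via $(b,x)\mapsto \bar b + \varphi(x)$ for a suitable $B$-linear $\varphi\colon N\to T$ extending scalars, with kernel a nilpotent-free complement, and that $T$ itself is a quotient of the valuation ring $S_{\inn}$ — which is precisely \ref{2.90}. Then the image of $(0,Z)$ is $Z\in T$, the injection $\langle Z\rangle_T\hookrightarrow T$ is essential by \ref{2.70}'s setup, and $\langle Z\rangle_T\cong T/\ip$, so by \ref{2.70} the module $E_T(\langle Z\rangle_T)$ is $\ip$-torsion only if $\ip$ is nilpotent, which it is not (shown in \ref{2.90}). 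Invoking \ref{1.40} (with the surjection $U\to T$, noting $\ip$ is the image of $\iq$ and $(0:_U(0,Z))$ maps into $(0:_TZ)$) and \ref{1.30} then forces $U$ to fail ITI with respect to $\iq$. It remains only to check $U$ is local with principal maximal ideal $\iq$, $1$-dimensional, and non-noetherian, all of which follow routinely from the corresponding facts about $T$ and $B$ together with $\dim U=\dim B=1$ (idealisation does not change dimension).
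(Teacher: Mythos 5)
Your skeleton (identify $\iq=\langle(p,0)\rangle_U$, establish $U/\iq\cong\langle(0,Z_0)\rangle_U$, note $\iq$ is not nilpotent, conclude via essentiality and \ref{1.30}) matches the paper's, but the step you yourself call ``the main obstacle'' is misdescribed and then left unproven. Essentiality of $\langle(0,Z_0)\rangle_U\hookrightarrow U$ is \emph{not} equivalent to $U$ being a quotient of a valuation ring: Lemma \ref{2.60} needs only essentiality and applies to any ring, and that is exactly how the paper proceeds. From the normal form of elements of $M\dfgl\langle Y_i\mid i\in\N\rangle_{S_{\inn}}/\langle pY_0\rangle_{S_{\inn}}$ (every nonzero element is $vZ_i$ with $v\in\Z_{\langle p\rangle}\setminus\langle p\rangle$, using $Y_iY_j=pY_0Y_{i+j+1}$ and $Z_i=p^{j-i}Z_j$) the paper deduces that every nonzero element of $U$ is $(up^n,vZ_i)$ with $u,v\in(\Z_{\langle p\rangle}\setminus\langle p\rangle)\cup\{0\}$ not both zero, and then checks essentiality directly: $(up^n,vZ_i)(0,u^{-1}Z_n)=(0,Z_0)$ if $u\neq 0$, while $(0,vZ_i)(p^i,0)=(0,vZ_0)\neq 0$ if $u=0$. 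This normal form is also what justifies locality and $\iq=\langle(p,0)\rangle_U$, so it cannot be bypassed.

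Your substitute --- a surjective ring morphism $U\rightarrow T$, $(b,x)\mapsto\bar b+\ph(x)$, followed by \ref{1.40} and \ref{2.90}\,b) --- is viable, but it is asserted rather than proven, and ``kernel a nilpotent-free complement'' is not a meaningful condition. What actually has to be checked is: multiplicativity, i.e.\ that the image of $N$ in $T$ is a square-zero ideal, which amounts to $\ic_{\inn}^2\subseteq\langle pY_0\rangle_{S_{\inn}}$ (true, again by $Y_iY_j=pY_0Y_{i+j+1}$); surjectivity, i.e.\ $T=\Z_{\langle p\rangle}+N$, which requires handling denominators $b+g'$ with $b\notin\langle p\rangle$ and $g'\in\ic$; and injectivity, which follows from $\Z_{\langle p\rangle}\cap\ic_{\inn}=0$. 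Once this is carried out one finds that $U\rightarrow T$ is an isomorphism, so \ref{2.100} collapses onto \ref{2.90}\,b) --- a genuinely shorter route than the paper's, but only after the verification you skipped, which is of essentially the same difficulty as the direct essentiality computation. Two smaller points: $(0:_U(0,Z_0))$ is $p\Z_{\langle p\rangle}\oplus N$, not ``$\ip\oplus N$'' ($\ip$ is an ideal of $T$, not of $\Z_{\langle p\rangle}$); and the assertions about $\dim U$ and non-noetherianness are not part of the statement and need not be proven.
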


\begin{proof}
For $i\in\N$ we denote by $Z_i$ the canonical image of $Y_i$ in $M\dfgl\langle Y_i\mid i\in\N\rangle_{S_{\inn}}/\langle pY_0\rangle_{S_{\inn}}$. If $i,j\in\N$, then $Y_iY_j=Y_0Y_{i+j}=pY_0Y_{i+j+1}$ in $S_{\inn}$. Therefore, an element $m$ of $M\setminus 0$ has the form $m=\sum_{j=1}^kv_jZ_{i_j}$ with a strictly increasing sequence $(i_j)_{j=1}^k$ in $\N$ and a family $(v_j)_{j=1}^k$ in $\Z_{\langle p\rangle}\setminus\{0\}$. Since $Z_{i_j}=p^{i_k-i_j}Z_{i_k}$ for $j\in[1,k-1]$, it follows that $m$ can be written in the form $m=vZ_i$ with $i\in\N$ and $v\in\Z_{\langle p\rangle}\setminus\{0\}$. Suppose that $i$ is chosen minimally with this property, and assume $v\in\langle p\rangle_{\Z_{\langle p\rangle}}$. Then, $v=v'p^n$ with $v'\in\Z_{\langle p\rangle}\setminus\langle p\rangle$ and $n>0$. If $n>i$, we get the contradiction $vZ_i=v'p^nZ_i=v'p^{n-i-1}pZ_0=0$. If $n\leq i$, we get $vZ_i=v' p^nZ_i=v' Z_{i-n}$, contradicting the minimality of $i$. Therefore, $v\in\Z_{\langle p\rangle}\setminus\langle p\rangle$.

The above shows that an element of $U\setminus 0$ has the form $(up^n,vZ_i)$ with $i,n\in\N$ and $u,v\in(\Z_{\langle p\rangle}\setminus\langle p\rangle)\cup\{0\}$ such that $(u,v)\neq(0,0)$. Using this, it is readily checked that $U$ is a local ring with maximal ideal $\iq=\langle(p,0)\rangle_U$, that the morphism of $U$-modules $U\rightarrow U$ with $1\mapsto(0,Z_0)$ induces an isomorphism of $U$-modules $U/\iq\cong\langle(0,Z_0)\rangle_U$, and that $\bigcap_{n\in\N}\iq^n=\bigcap_{n\in\N}\langle(p^n,0)\rangle_U=\bigcap_{n\in\N}(\langle p^n\rangle\oplus M)=0\oplus M=\langle(0,Z_i)\mid i\in\N\rangle_U\neq 0$. In particular, $\iq$ is not nilpotent. Now, we consider $(up^n,vZ_i)\in U\setminus 0$ with $i,n\in\N$ and $u,v\in(\Z_{\langle p\rangle}\setminus\langle p\rangle)\cup\{0\}$. If $u\neq 0$, then $(up^n,vZ_i)(0,u^{-1}Z_n)=(0,Z_0)$, and if $u=0$, then $v\neq 0$, and therefore $(up^n,vZ_i)(p^i,0)=(0,Z_0)$. From this it follows that the canonical injection $\langle(0,Z_0)\rangle_U\hookrightarrow U$ is essential, thus \ref{1.30} and \ref{2.60} yield the claim.
\end{proof}

We end this section by considering again a $0$-dimensional local ring $R$ with maximal ideal $\im$. If $\im$ is nilpotent, then $R$ has ITI with respect to every ideal (\ref{1.20} A)). If $\im$ is idempotent, then $R$ has ITI with respect to every ideal if and only if it is a field (\ref{2.10}). All examples of $0$-dimensional local rings without ITI constructed so far had an idempotent maximal ideal (\ref{2.20}, \ref{2.50}). We present now a further $0$-dimensional local ring $R$; its maximal ideal $\im$ is neither nilpotent nor idempotent, but T-nilpotent\footnote{An ideal $\ia$ of $R$ is {\it T-nilpotent}\/ if for every family $(x_i)_{i\in\N}$ in $\ia$ there exists $n\in\N$ with $\prod_{i=0}^nx_i=0$.}, and its $\im$-adic topology is separated. But still $R$ does not have ITI with respect to $\im$.

\begin{lemma}\label{2.110}
Let $K$ be a field, let $A\dfgl K[(X_i)_{i\in\N}]$ be the polynomial algebra in countably infinitely many indeterminates $(X_i)_{i\in\N}$, let $\inn\dfgl\langle X_i\mid i\in\N\rangle_A$ and let $\ia\dfgl\langle\{X_iX_j\mid i,j\in\N,i\neq j\}\cup\{X_i^{i+1}\mid i\in\N\}\rangle_A$. Then, there exists $f\in E_A(A/\inn)\setminus\Gamma_{\inn}(E_A(A/\inn))$ with $\ia f=0$.
\end{lemma}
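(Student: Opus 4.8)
The plan is to produce $f$ as the image of an explicit linear form under an embedding of a cyclic module into $E_A(A/\inn)$. Put $\ic\dfgl\langle X_iX_j\mid i,j\in\N,\,i\neq j\rangle_A$, so that $\ic\subseteq\ia$, set $R\dfgl A/\ic$, and write $x_i$ for the image of $X_i$ in $R$. Since $\ic$ is a monomial ideal, $R$ has $K$-basis $\{1\}\cup\{x_i^k\mid i\in\N,\,k\geq 1\}$. Regard $\hm{K}{R}{K}$ as an $A$-module by restricting its standard $R$-module structure along $A\twoheadrightarrow R$ (so that $\ic$ acts as zero), let $\phi\in\hm{K}{R}{K}$ be the $K$-linear form with $\phi(1)=0$ and $\phi(x_i^k)=\delta_{ik}$, and let $\epsilon\in\hm{K}{R}{K}$ be the form dual to $1$. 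A direct check gives $X_j^j\phi=\epsilon$ for every $j\geq 1$, $\inn\epsilon=0$, and hence $A\epsilon=K\epsilon\cong A/\inn$.

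Two short computations settle the torsion behaviour of $\phi$. Since $\ic$ acts as zero, for $\ia\phi=0$ it suffices to check $X_i^{i+1}\phi=0$; this holds because $\phi(x_i^{i+1}r)=0$ for every $r\in R$, as $x_i^{i+1}r$ is a $K$-linear combination of monomials $x_i^s$ with $s\geq i+1$. On the other hand $X_n^n\in\inn^n$ and $X_n^n\phi=\epsilon\neq 0$, so $\inn^n\phi\neq 0$ for every $n$; thus $\phi\notin\Gamma_{\inn}(\hm{K}{R}{K})$.

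The crux --- and the only place needing real work --- is that $A\epsilon\cong A/\inn$ is essential in $A\phi$, i.e.\ that $\epsilon\in Ag$ for every $g\in A\phi\setminus 0$. One first computes, for $m\geq 1$, that $X_j^m\phi$ equals $0$ if $m>j$, equals $\epsilon$ if $m=j$, and for $m<j$ equals the form $\xi_{j,j-m}$ dual to $x_j^{j-m}$. Hence every $g\in A\phi$ has the shape $g=c\phi+a\epsilon+\sum_{(l,k)\in F}a_{l,k}\xi_{l,k}$ with $c,a,a_{l,k}\in K$ and $F$ a finite set of pairs of positive integers. If $c\neq 0$, picking $j$ with $j\neq l$ for all $(l,k)\in F$ makes $X_j^j$ annihilate $\epsilon$ and every $\xi_{l,k}$-summand, leaving $X_j^jg=c\epsilon$. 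If $c=0$ and $g\neq 0$, then either $F=\emptyset$, so $g\in K\epsilon\setminus 0$, or one takes $l_0$ occurring in $F$ and $k_0\dfgl\max\{k\mid(l_0,k)\in F\}$: applying $X_{l_0}^{k_0}$ kills every summand except the one indexed by $(l_0,k_0)$, which becomes $a_{l_0,k_0}\epsilon$. In all cases $\epsilon\in Ag$. The bookkeeping of how the monomials $X_j^m$ act on $\epsilon$ and the $\xi_{l,k}$ is routine, but it must be carried out with care; the one subtlety is the maximality of $k_0$, which is precisely what prevents a stray higher term $\xi_{l_0,k-k_0}$ from surviving.

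Finally, since $A\epsilon\cong A/\inn$ is essential in $A\phi$, injectivity of $E_A(A/\inn)$ lets us extend the canonical embedding $A/\inn\rightarrowtail E_A(A/\inn)$ to a homomorphism $\iota\colon A\phi\to E_A(A/\inn)$, which is injective because $\ker\iota$ meets the essential submodule $A\epsilon$ trivially. Put $f\dfgl\iota(\phi)$. Then $\ia f=\iota(\ia\phi)=0$, while for every $n\geq 1$ we have $X_n^n f=\iota(X_n^n\phi)=\iota(\epsilon)\neq 0$, so $\inn^n f\neq 0$ for every $n$; therefore $f\notin\Gamma_{\inn}(E_A(A/\inn))$, as required.
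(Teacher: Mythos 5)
Your proposal is correct and follows essentially the same route as the paper: you take the same $K$-linear functional dual to $\{X_i^i\mid i\in\N\}$ (viewed in $\hm{K}{A/\ic}{K}$ rather than in $\hm{K}{A}{K}$, which is the same thing up to the identification of $\hm{K}{A/\ic}{K}$ with the annihilator of $\ic$), verify $\ia\phi=0$ and $\inn^n\phi\ne 0$ by the same monomial computations, and land $\phi$ in $E_A(A/\inn)$ via essentiality of the copy of $A/\inn$. Your case analysis for essentiality is in fact more complete than the paper's rather terse justification, so no objection there.
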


\begin{proof}
Let $\mathbbm{M}$ denote the set of monomials in $A$. We furnish $E\dfgl\hm{K}{A}{K}$ with its canonical structure of $A$-module. Its elements are families in $K$ indexed by $\mathbbm{M}$. For $g=(\alpha_t)_{t\in\mathbbm{M}}\in E$ and $s\in\mathbbm{M}$ we have $sg=(\alpha_{st})_{t\in\mathbbm{M}}$. The morphism of $A$-modules $A\rightarrow E$ mapping $1$ to $(\alpha_t)_{t\in\mathbbm{M}}$ with $\alpha_1=1$ and $\alpha_t=0$ for $t\neq 1$ has kernel $\inn$ and hence induces a monomorphism of $A$-modules $A/\inn\rightarrowtail E$ by means of which we consider $A/\inn$ as a sub-$A$-module of $E$.

Let $f=(\alpha_t)_{t\in\mathbbm{M}}$ with $\alpha_t=1$ for $t\in\{X_i^i\mid i\in\N\}$ and $\alpha_t=0$ for $t\notin\{X_i^i\mid i\in\N\}$. If $i,j\in\N$ with $i\neq j$ and $t\in\mathbbm{M}$, then $X_iX_jf(t)=f(X_iX_jt)=0$. If $i\in\N$ and $t\in\mathbbm{M}$, then $X_i^{i+1}f(t)=f(X_i^{i+1}t)=0$. This implies $\ia f=0$. If $n\in\N$, then $X_n^n\in\inn^n$ and $X_n^nf(1)=f(X_n^n)=1$. Thus, if $n\in\N$, then $\inn^n f\neq 0$. Furthermore, $X_1f(1)=f(X_1^1)=1$, and for $t\in\mathbbm{M}\setminus\{1\}$ we have $X_1f(t)=f(X_1t)=0$, so that $X_1f$ is a non-zero element of $A/\inn$. Therefore, the canonical injection $A/\inn\hookrightarrow A/\inn+\langle f\rangle_A$ is essential, and so it follows $f\in E_A(A/\inn)$ as desired.
\end{proof}

\begin{prop}\label{2.120}
Let $K$ be a field, let $A$, $\inn$ and $\ia$ be as in \ref{2.110}, let $R\dfgl A/\ia$, denote for $i\in\N$ by $Y_i$ the canonical image of $X_i$ in $R$, and let $\im\dfgl\langle Y_i\mid i\in\N\rangle_R$. Then, $R$ is a $0$-dimensional local ring; its maximal ideal $\im$ is neither nilpotent nor idempotent, but $T$-nilpotent; its $\im$-adic topology is separated; the $\im$-torsion functor $\Gamma_{\im}$ is not a radical\/\footnote{A {\it radical}\/ is a subfunctor $F$ of $\Id_{\catmod(R)}$ with $F(M/F(M))=0$ for every $R$-module $M$.}; $R$ does not have ITI with respect to $\im$.
\end{prop}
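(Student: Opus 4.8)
The plan is to work out the $K$-algebra structure of $R$ explicitly; everything then follows, the last and only substantial assertion being reduced to Lemma~\ref{2.110}.

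Since $\ia$ is generated by monomials, $R$ has a $K$-basis given by the images of the monomials of $A$ not divisible by any generator of $\ia$, and these are precisely $1$ and the powers $Y_i^k$ with $i\in\N$ and $1\leq k\leq i$ (note $Y_0=0$). Hence $\im$ has $K$-basis $\{Y_i^k\mid i\geq 1,\ 1\leq k\leq i\}$ and, more generally, $\im^n$ has $K$-basis $\{Y_i^k\mid i\geq n,\ n\leq k\leq i\}$. From the basis of $\im$ I would read off that every $x\in\im$ is nilpotent: if only $Y_0,\dots,Y_N$ occur in $x$, then any product of $N+1$ of the monomials occurring in $x$ is either divisible by a product of two distinct indeterminates or is a power $Y_j^k$ with $j\leq N<k$, hence is $0$, so $x^{N+1}=0$. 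Thus $\im=\nil(R)$ is the unique prime of $R$, so $R$ is a $0$-dimensional local ring. The basis descriptions also give $Y_n^n\in\im^n\setminus 0$ for every $n$, so $\im$ is not nilpotent; $Y_1\in\im\setminus\im^2$, so $\im$ is not idempotent; and $\bigcap_{n\in\N}\im^n=0$ by a finite-support argument, so the $\im$-adic topology is separated. For $T$-nilpotency of $\im$, given a family $(x_i)_{i\in\N}$ in $\im$ I would take $N$ to be the largest index of an indeterminate occurring in $x_0$ and expand $\prod_{i=0}^N x_i$: a resulting term is a product of $N+1$ monomials, and is nonzero only if all of them are powers of a single $Y_j$ occurring in $x_0$ — forcing $j\leq N$ — in which case the total exponent is at least $N+1>j$, so the term vanishes.

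For the radical assertion I would compute $\Gamma_{\im}(R)$: each basis element $Y_i^k$ is killed by $\im^{i-k+1}$ whereas no power of $\im$ kills $1$, so $\Gamma_{\im}(R)=\im$. Then $R/\Gamma_{\im}(R)=R/\im$ is annihilated by $\im$, whence $\Gamma_{\im}(R/\Gamma_{\im}(R))=R/\im\neq 0$, so $\Gamma_{\im}$ is not a radical.

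Finally, by Proposition~\ref{1.30} it suffices to show that $E_R(R/\im)$ is not an $\im$-torsion module. As $\ia\subseteq\inn$, the $R$-module $R/\im$ is canonically isomorphic to $A/\inn$ and $\ia\cdot(A/\inn)=0$, so by \cite[10.1.16]{bs} there is an isomorphism of $R$-modules $E_R(R/\im)\cong\hm{A}{A/\ia}{E_A(A/\inn)}\cong(0:_{E_A(A/\inn)}\ia)$, where $R=A/\ia$ acts in the evident way. Under this identification $\im^n y=\inn^n y$ for every $y\in(0:_{E_A(A/\inn)}\ia)$ and every $n\in\N$, so $y$ is $\im$-torsion if and only if it is $\inn$-torsion. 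Lemma~\ref{2.110} provides $f\in(0:_{E_A(A/\inn)}\ia)$ that is not $\inn$-torsion; hence its image in $E_R(R/\im)$ is not $\im$-torsion, so $E_R(R/\im)$ is not an $\im$-torsion module, and Proposition~\ref{1.30} yields the claim. The monomial bookkeeping is routine; the one point deserving care is this last identification — that it is $R$-linear and turns the $\inn$-action on $f$ into the $\im$-action on its image — but since Lemma~\ref{2.110} has already produced the decisive non-torsion element, no genuinely new difficulty arises.
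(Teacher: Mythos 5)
Your proposal is correct and follows essentially the same route as the paper: the ring-theoretic assertions are settled by the same monomial bookkeeping (the paper phrases the basis facts slightly less explicitly but uses the identical relations $Y_iY_j=0$ for $i\neq j$ and $Y_i^{i+1}=0$), the failure of radicality comes from the same observation that $\im\subseteq\Gamma_{\im}(R)\not\ni 1$, and the ITI failure is obtained exactly as in the paper via \cite[10.1.16]{bs}, Lemma \ref{2.110} and Proposition \ref{1.30}.
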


\begin{proof}
We set $P\dfgl\{X_iX_j\mid i,j\in\N,i\neq j\}$ and $Q\dfgl\{X_i^{i+1}\mid i\in\N\}$. The ideal $\im$ is maximal since $R/\im\cong K$. As $Y_i$ is nilpotent for every $i\in\N$ it follows $\im\subseteq\nil(R)\subseteq\im$, hence $\im=\nil(R)$. Thus, $R$ is a $0$-dimensional local ring with maximal ideal $\im$. If $n\in\N$ and $\im^n=0$, then $Y_n^n=0$, hence $X_n^n\in\ia$ -- a contradiction; therefore, $\im$ is not nilpotent. If $\im$ is idempotent, then $Y_1\in\im^2=\langle Y_i^2\mid i>1\rangle_R$, hence $X_1$ is a polynomial in $P\cup Q\cup\{X_i^2\mid i>1\}$, thus a polynomial in $P\cup\{X_0\}\cup\{X_i^2\mid i>0\}$ -- a contradiction; therefore, $\im$ is not idempotent. Assume now there is a family $(f_i)_{i\in\N}$ in $\im$ with $\prod_{i=0}^nf_i\neq 0$ for every $n\in\N$. Without loss of generality, we can suppose all the $f_i$ are monomials in $\{Y_j\mid j\in\N\}$. As $Y_iY_j=0$ for $i,j\in\N$ with $i\neq j$, there exists $k\in\N$ such that all the $f_i$ are monomials in $Y_k$, yielding the contradiction $\prod_{i=0}^{k+1}f_i=0$. Thus, $\im$ is T-nilpotent. If $n\in\N$, then $\im^n=\langle Y_i^n\mid i\geq n\rangle_R$, so in elements of $\im^n$ there occurs no $Y_i$ with $i<n$. It follows that in elements of $\bigcap_{n\in\N}\im^n$ there occurs no $Y_i$ at all, hence $\bigcap_{n\in\N}\im^n=0$ and thus $R$ is $\im$-adically separated. It is readily seen that $1+\Gamma_{\im}(R)$ is a non-zero element of $\Gamma_{\im}(R/\Gamma_{\im}(R))$, hence $\Gamma_{\im}$ is not a radical. Finally, the $A$-module $(0:_{E_A(A/\inn)}\ia)$ is not an $\inn$-torsion module by \ref{2.110}, hence by base ring independence of torsion functors and \cite[10.1.16]{bs} we see that $E_R(R/\im)\cong(0:_{E_A(A/\inn)}\ia)$ is not an $\im$-torsion module. Thus, \ref{1.30} implies that $R$ does not have ITI with respect to $\im$.
\end{proof}

If, in \ref{2.110} and \ref{2.120}, we take $K$ instead of a field to be a selfinjective ring, then the conclusions still hold, except that $R$ need not be a $0$-dimensional local ring and that $\im$ need not be a maximal ideal of $R$.


\section{Weak proregularity, and applications to local cohomology}

In this section we clarify the relation between ITI and weak proregularity, and then sketch some basic results on local cohomology for rings with ITI, but omit proofs. Results under ITI hypotheses on the closely related higher ideal transformation functors can be found in \cite[2.3]{r-sheaves}.

\smallskip

Let $R$ be a ring and let $\ia\subseteq R$ be an ideal. The right derived cohomological functor of the $\ia$-torsion functor $\Gamma_{\ia}$ is denoted by $(H_{\ia}^i)_{i\in\Z}$, and $H_{\ia}^i$ is called {\it the $i$-th local cohomology functor with respect to $\ia$.} There is a canonical isomorphism of functors $\Gamma_{\ia}(\bullet)\cong\ilim_{n\in\N}\hm{R}{R/\ia^n}{\bullet}$ that can be canonically extended to an isomorphism of $\delta$-functors $(H_{\ia}^i(\bullet))_{i\in\Z}\cong(\ilim_{n\in\N}\ext{i}{R}{R/\ia^n}{\bullet})_{i\in\Z}$ (\cite[1.3.8]{bs}).

\smallskip

Suppose now that $\ia$ is of finite type and let ${\bf a}=(a_i)_{i=1}^n$ be a generating family of $\ia$. \v{C}ech cohomology with respect to ${\bf a}$ yields an  exact $\delta$-functor, denoted by $(\check{H}^i({\bf a},\bullet))_{i\in\Z}$. There is a canonical isomorphism $\Gamma_{\ia}(\bullet)\cong\check{H}^0({\bf a},\bullet)$ that can be canonically extended to a morphism of $\delta$-functors $\gamma_{{\bf a}}\colon(H_{\ia}^i(\bullet))_{i\in\Z}\rightarrow(\check{H}^i({\bf a},\bullet))_{i\in\Z}$ (\cite[5.1]{bs}). The sequence ${\bf a}$ is called \textit{weakly proregular} if $\gamma_{{\bf a}}$ is an isomorphism. (By \cite[3.2]{schenzel} this is equivalent to the usual -- but more technical -- definition of weak proregularity (\cite[Corrections]{lipman}, \cite[4.21]{yekutieli}, cf.\,\cite[Expos\'e II, Lemme 9]{sga2}).) The ideal $\ia$ is called \textit{weakly proregular} if it has a weakly proregular generating family. By \cite[6.3]{yekutieli}, this is the case if and only if every finite generating family of $\ia$ is weakly proregular. Ideals in noetherian rings are weakly proregular (\cite[5.1.20]{bs}), but the converse need not hold -- see below for examples. Local cohomology with respect to a weakly proregular ideal $\ia$ of finite type behaves quite well, and so we may ask about the relation between this notion and ITI with respect to $\ia$. The next two results clarify this.

\begin{prop}\label{wpr10}
There exist a ring $R$ and a weakly proregular ideal $\ia$ such that $R$ does not have ITI with respect to $\ia$.
\end{prop}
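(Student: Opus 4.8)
The plan is to exhibit an explicit ring $R$ together with a finitely generated ideal $\ia$ that is weakly proregular but for which $R$ fails to have ITI with respect to $\ia$. The natural source of a weakly proregular ideal is a principal ideal generated by a single element: for any ring $R$ and any $a \in R$, the one-element family $(a)$ is weakly proregular, since the \v{C}ech complex on a single element computes local cohomology with respect to $\langle a\rangle_R$ in general (this is classical and is subsumed by the references to \cite{lipman}, \cite{yekutieli}, \cite{schenzel} cited just above). So the strategy reduces to: \emph{find a ring $R$ and an element $a \in R$ such that $R$ does not have ITI with respect to $\langle a\rangle_R$.} But such rings have already been constructed in Section 2 --- for instance the ring $T$ of \ref{2.90}\,b), or the idealisation $U$ of \ref{2.100}, or the valuation ring $S_\inn$ of \ref{2.90}\,a), each of which fails ITI with respect to a \emph{principal} ideal (its maximal ideal). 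Thus the proof can simply be: take $R \dfgl T$ (say) and $\ia \dfgl \ip$, note that $\ip$ is principal and hence weakly proregular, and invoke \ref{2.90}\,b) for the failure of ITI.

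Concretely, here is how I would carry it out. First I would recall, in one sentence, that a principal ideal $\langle a\rangle_R$ of any ring $R$ is weakly proregular: the single-element family $(a)$ generates it, and the morphism of $\delta$-functors $\gamma_{(a)}$ from local cohomology to \v{C}ech cohomology on $(a)$ is an isomorphism (this is the classical computation of $H^i_{\langle a\rangle}$ via the telescope/\v{C}ech complex $0 \to R \to R_a \to 0$, valid without noetherian or even coherence hypotheses; it is implicit in \cite{schenzel}, \cite{yekutieli}). Second, I would point to one of the rings from Section 2 whose maximal ideal is principal and witnesses a failure of ITI: the cleanest is the ring $T$ from \ref{2.90} with maximal ideal $\ip = \langle p\rangle_T$, which by \ref{2.90}\,b) does not have ITI with respect to $\ip$. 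Third, I would conclude that $(R,\ia) = (T,\ip)$ does the job. One could equally cite \ref{2.100} or \ref{2.90}\,a).

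I expect essentially no obstacle here, since all the hard work --- building a ring without ITI with respect to a principal ideal --- is already done in Section 2. The only point that requires a word of care is the claim that \emph{every} principal ideal is weakly proregular; if the paper wants to stay strictly within its own stated framework, I would phrase this via the cited equivalence \cite[3.2]{schenzel} together with the well-known fact that the \v{C}ech complex on one element has the correct cohomology, or simply remark that a one-element sequence is trivially weakly proregular in the sense of \cite[Corrections]{lipman}/\cite[4.21]{yekutieli} because the relevant pro-system of Koszul homologies is pro-zero in positive degrees for length-one Koszul complexes. Given the level of detail in the rest of the paper, I would keep the proof to two or three lines: ``Let $R$ and $\ia$ be as in \ref{2.90}\,b), so that $\ia$ is principal, hence weakly proregular, while $R$ does not have ITI with respect to $\ia$.''

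\begin{proof}
Let $R$ and $\ia$ be as in \ref{2.90}\,b); that is, let $R$ be the $1$-dimensional local ring there denoted $T$ and let $\ia$ be its principal maximal ideal there denoted $\ip$. Being principal, $\ia$ is generated by a one-element family, and such a family is weakly proregular, so $\ia$ is weakly proregular. On the other hand, $R$ does not have ITI with respect to $\ia$ by \ref{2.90}\,b).
\end{proof}
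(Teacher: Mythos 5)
There is a genuine error here, and it sits exactly at the one point you flagged as ``requiring a word of care''. A one-element family is \emph{not} automatically weakly proregular: for a single element $a$ the relevant pro-system is $\bigl((0:_Ra^u)\bigr)_{u\in\N}$ with transition maps $(0:_Ra^v)\to(0:_Ra^u)$ given by multiplication by $a^{v-u}$, and weak proregularity of $(a)$ demands that this system be pro-zero, i.e.\ that for every $u$ there exist $v\geq u$ with $a^{v-u}(0:_Ra^v)=0$. Equivalently, the \v{C}ech complex $0\to R\to R_a\to 0$ computes $H^i_{\langle a\rangle_R}$ only under this hypothesis; it is automatic when $a$ is a non-zero-divisor (then all the $(0:_Ra^v)$ vanish) or when $R$ is noetherian, but not in general. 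Your claim that ``the relevant pro-system of Koszul homologies is pro-zero in positive degrees for length-one Koszul complexes'' is false.

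Worse, your chosen witness $(T,\ip)$ from \ref{2.90}\,b) is precisely an example where this fails. In $S_{\inn}$ one has $p^{v-1}Y_{v-1}=Y_0$, hence $p^vY_{v-1}=pY_0$, so the image of $Y_{v-1}$ in $T=S_{\inn}/\langle pY_0\rangle_{S_{\inn}}$ lies in $(0:_Tp^v)$; but multiplication by $p^{v-1}$ sends it to the image $Z$ of $Y_0$, which is nonzero in $T$ since $Y_0\notin\langle pY_0\rangle_{S_{\inn}}$. Thus for $u=1$ no $v$ makes the transition map zero, and $\ip=\langle p\rangle_T$ is \emph{not} weakly proregular, so $(T,\ip)$ proves nothing about the relation between weak proregularity and ITI. (A similar computation rules out the idealisation $U$ of \ref{2.100}.) The paper's proof takes the only one of your three candidates that works, namely the $2$-dimensional \emph{domain} $S_{\inn}$ of \ref{2.90}\,a): there $p$ is a regular element, regular sequences are weakly proregular (\cite[4.22]{yekutieli}), and $S_{\inn}$ fails ITI with respect to $\inn_{\inn}=\langle p\rangle_{S_{\inn}}$. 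So the overall strategy (recycle a Section~2 example with principal maximal ideal) is the right one, but the justification of weak proregularity must go through regularity of the generator, not through principality alone, and this forces the choice of $S_{\inn}$ rather than $T$.
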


\begin{proof}
The $2$-dimensional domain $S_{\inn}$ from \ref{2.90} does not have ITI with respect to its principal maximal ideal $\inn_{\inn}=\langle p\rangle_{S_{\inn}}$. But $p$ is regular, and hence $\inn_{\inn}$ is weakly proregular by \cite[4.22]{yekutieli}.
\end{proof}

The proof of the next result makes use of Koszul homology and cohomology. We briefly recall our notations and refer the reader to \cite[X.9]{a} and \cite[5.2]{bs} for details. Let ${\bf a}=(a_i)_{i=1}^n$ be a finite sequence in $R$. We denote by $K_\bullet({\bf a})$ the Koszul complex with respect to ${\bf a}$ and by $K^\bullet({\bf a})\dfgl\hm{R}{K_\bullet({\bf a})}{R}$ the Koszul cocomplex with respect to ${\bf a}$. We define functors $K_\bullet({\bf a},\sq)\dfgl K_\bullet({\bf a})\otimes_R\sq$ and $K^\bullet({\bf a},\sq)\dfgl\hm{R}{K_\bullet({\bf a})}{\sq}$, and for $i\in\Z$ we set $H_i({\bf a},\sq)\dfgl H_i(K_\bullet({\bf a},\sq))$ and $H^i({\bf a},\sq)\dfgl H^i(K^\bullet({\bf a},\sq))$. For $u\in\N$ we set ${\bf a}^u=(a_i^u)_{i=1}^n$. For $u,v\in\N$ with $u\leq v$ there is a morphism of functors $K_\bullet({\bf a}^u,\sq)\rightarrow K_\bullet({\bf a}^v,\sq)$, and these morphisms give rise to an inductive system of functors $(K_\bullet({\bf a}^u,\sq))_{u\in\N}$. We denote its inductive limit by $K_\bullet({\bf a}^\infty,\sq)$, and we set $H_i({\bf a}^\infty,\sq)\dfgl H_i(K_\bullet({\bf a}^\infty,\sq))$.

\begin{prop}\label{wpr20}
Let $R$ be a ring and let $\ia\subseteq R$ be an ideal of finite type. If $R$ has ITI with respect to $\ia$, then $\ia$ is weakly proregular.
\end{prop}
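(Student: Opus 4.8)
The plan is to fix a finite generating family ${\bf a}=(a_i)_{i=1}^n$ of $\ia$ (possible since $\ia$ is of finite type) and to prove that ${\bf a}$ is weakly proregular, i.e.\ that $\gamma_{{\bf a}}$ is an isomorphism. Since $(\check{H}^i({\bf a},\bullet))_{i\in\Z}$ is an exact $\delta$-functor with $\check{H}^0({\bf a},\bullet)\cong\Gamma_{\ia}(\bullet)$, while $(H_{\ia}^i(\bullet))_{i\in\Z}$ is the right derived functor of $\Gamma_{\ia}$, the standard effaceability criterion reduces this to the assertion that $\check{H}^i({\bf a},I)=0$ for every injective $R$-module $I$ and every $i\geq 1$ (cf.\ \cite[5.1]{bs}, \cite[3.2]{schenzel}). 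This is what I would establish.

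First I would use the ITI hypothesis via \ref{1.10}: for an injective $R$-module $I$ the submodule $\Gamma_{\ia}(I)$ is injective, so the inclusion $\Gamma_{\ia}(I)\hookrightarrow I$ splits and $I\cong\Gamma_{\ia}(I)\oplus I'$ with $I'$ injective and $\Gamma_{\ia}(I')=0$ (the last equality holds because $\Gamma_{\ia}$ is a radical, $\ia$ being of finite type). By additivity, $\check{H}^i({\bf a},I)\cong\check{H}^i({\bf a},\Gamma_{\ia}(I))\oplus\check{H}^i({\bf a},I')$. The first summand vanishes for $i\geq 1$: since $\Gamma_{\ia}(I)$ is $\ia$-torsion, $\Gamma_{\ia}(I)_{a_j}=0$ for each $j$, hence $\check{C}^\bullet({\bf a},\Gamma_{\ia}(I))$ is concentrated in degree $0$. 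It thus remains to show $\check{H}^i({\bf a},I')=0$ for all $i$.

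Here I would pass to Koszul complexes. There is a canonical isomorphism $\check{H}^i({\bf a},I')\cong\ilim_{u\in\N}H^i({\bf a}^u,I')$ (cf.\ \cite[5.2]{bs}). As $I'$ is injective, the exact contravariant functor $\hm{R}{\bullet}{I'}$ commutes with (co)homology, so applying it to the complex $K_\bullet({\bf a}^u)$ of finitely generated free modules gives $H^i({\bf a}^u,I')\cong\hm{R}{H_i({\bf a}^u,R)}{I'}$ for all $i$ and $u$. Now $H_i({\bf a}^u,R)$ is annihilated by $\langle a_1^u,\ldots,a_n^u\rangle_R$ (each $a_j^u$ acts as $0$ on $H_\bullet({\bf a}^u,R)$; cf.\ \cite[X.9]{a}), hence by $\ia^{nu}$, so it is an $\ia$-torsion $R$-module; since $\Gamma_{\ia}(I')=0$, the module $I'$ contains no non-zero $\ia$-torsion submodule, and therefore $\hm{R}{H_i({\bf a}^u,R)}{I'}=0$ for all $i$ and $u$. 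Hence $\check{H}^i({\bf a},I')\cong\ilim_{u\in\N}0=0$, completing the argument.

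I expect the step needing the most care to be the opening reduction -- that weak proregularity of ${\bf a}$ amounts to the vanishing of $\check{H}^i({\bf a},-)$ for $i\geq 1$ on injective modules -- together with the identity $H^i({\bf a}^u,I')\cong\hm{R}{H_i({\bf a}^u,R)}{I'}$ for injective $I'$. Granted these, the ITI hypothesis enters solely to split off the $\ia$-torsion submodule of an injective module, and the remaining input is the elementary fact that the Koszul homology of $R$ is $\ia$-torsion whereas $I'$ is $\ia$-torsion-free.
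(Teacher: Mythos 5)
Your proof is correct and follows essentially the same route as the paper: reduce weak proregularity to effaceability of $\check{H}^i({\bf a},\bullet)$ on injectives, use the self-duality of the Koszul complex against an injective module to identify $\check{H}^i({\bf a},I)$ with $\ilim_u\hm{R}{H^{n-i}({\bf a}^u,R)}{I}$, and exploit that the Koszul cohomology of $R$ is $\ia$-torsion. The only cosmetic difference is that you invoke ITI to split $I=\Gamma_{\ia}(I)\oplus I'$ and kill each summand separately, whereas the paper substitutes $\Gamma_{\ia}(I)$ for $I$ inside the Hom and runs the chain of isomorphisms backwards; both uses of the hypothesis are equivalent.
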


\begin{proof}
Let ${\bf a}=(a_i)_{i=1}^n$ be a finite generating family of $\ia$. Let $i\in\Z$. It follows from \cite[5.2.5]{bs} that there is a canonical isomorphism of functors \[\check{H}^i({\bf a},\bullet)\cong H_{n-i}({\bf a}^\infty,\bullet).\tag{1}\] Since inductive limits are exact, there is a canonical isomorphism of functors \[H_i({\bf a}^\infty,\bullet)\cong\ilim_{u\in\N} (H_i({\bf a}^u,\bullet)).\tag{2}\] If $u\in\N$, then the components of $K_\bullet({\bf a}^u)$ and $K^\bullet({\bf a}^u)$ are free of finite type, hence there are canonical isomorphisms $K_\bullet({\bf a}^u)\cong\hm{R}{K^\bullet({\bf a}^u)}{R}$ of complexes and\linebreak $\hm{R}{K^\bullet({\bf a}^u)}{R}\otimes_R\sq\cong\hm{R}{K^\bullet({\bf a}^u)}{\sq}$ of functors (\cite[II.2.7 Proposition 13; II.4.2 Proposition 2]{a}). Therefore, there is a canonical isomorphism of functors \[\ilim_{u\in\N}(H_i({\bf a}^u,\sq))\cong\ilim_{u\in\N}H_i(\hm{R}{K^\bullet({\bf a}^u)}{\sq})\tag{3}.\] If $I$ is an injective $R$-module, then $\hm{R}{\bullet}{I}$ is exact and thus commutes with formation of homology, so that there is a canonical isomorphism of $R$-modules \[\ilim_{u\in\N} H_i(\hm{R}{K^\bullet({\bf a}^u)}{I})\cong\ilim_{u\in\N} \hm{R}{H^i({\bf a}^u,R)}{I}.\tag{4}\] If $u\in\N$, then $H^i({\bf a}^u,R)$ is an $\ia$-torsion module (\cite[X.9.1 Proposition 1 Corollaire 2]{a}). Hence, there is a canonical isomorphism of functors \[\ilim_{u\in\N}\hm{R}{H^i({\bf a}^u,R)}{\bullet}\cong\ilim_{u\in\N}\hm{R}{H^i({\bf a}^u,R)}{\Gamma_{\ia}(\bullet)}.\tag{5}\]

Now, if $I$ is an injective $R$-module, then so is $\Gamma_{\ia}(I)$ by our hypothesis, and thus assembling the above, we get canonical isomorphisms of $R$-modules \[\check{H}^i({\bf a},I)\overset{(1)-(4)}\cong\ilim_{u\in\N}\hm{R}{H^{n-i}({\bf a}^u)}{I}\overset{(5)}\cong\]\[\ilim_{u\in\N}\hm{R}{H^{n-i}({\bf a}^u)}{\Gamma_{\ia}(I)}\overset{(1)-(4)}\cong\check{H}^i({\bf a},\Gamma_{\ia}(I)).\] Since the components of nonzero degree of the \v{C}ech cocomplex with respect to ${\bf a}$ of an $\ia$-torsion module are zero, it follows that $\check{H}^i({\bf a},\bullet)$ is effaceable for $i\in\N^*$, and thus $\ia$ is weakly proregular.
\end{proof}

The results from Section 1 together with \ref{wpr20} imply, for example, that ideals of finite type in absolutely flat rings or in $1$-dimensional local domains are weakly proregular. The statement about absolutely flat rings can be proven directly by first noting that idempotent elements are proregular and thus generate weakly proregular ideals (\cite[2.7]{schenzel}), and then using the fact that an ideal of finite type in an absolutely flat rings is generated by an idempotent (\cite[4.23]{lam1}). Interestingly, the same observation shows that the principal ideal $\ia$ in Example \ref{1.200} A) is weakly proregular. Finally, let us point out that \ref{wpr20} together with \ref{1.150} yields a new proof of the fact that ideals in noetherian rings are weakly proregular.

\smallskip

Now we turn to basic results on local cohomology for rings with ITI. The interplay between $\Gamma_{\ia}$ and local cohomology functors is described by the following result, proven analogously to \cite[2.1.7]{bs} on use of \ref{1.10}.

\begin{prop}\label{3.10}
Suppose $R$ has ITI with respect to $\ia$, let $i>0$ and let $M$ be an $R$-module. Then, $H^i_{\ia}(\Gamma_{\ia}(M))=0$, and the canonical morphism $H_{\ia}^i(M)\rightarrow H_{\ia}^i(M/\Gamma_{\ia}(M))$ is an isomorphism.
\end{prop}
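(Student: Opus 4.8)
The plan is to follow the classical argument for \cite[2.1.7]{bs} verbatim, using the ITI hypothesis exactly where noetherianness is used there. First I would set $N\dfgl\Gamma_{\ia}(M)$ and pick an injective resolution $J^\bullet$ of $N$. Since $R$ has ITI with respect to $\ia$, \ref{1.10} guarantees that we may take $J^\bullet$ to consist of $\ia$-torsion modules, or equivalently that $\Gamma_{\ia}(J^\bullet)=J^\bullet$. Then $H^i_{\ia}(N)=H^i(\Gamma_{\ia}(J^\bullet))=H^i(J^\bullet)$, which is $N$ for $i=0$ and $0$ for $i>0$ because $J^\bullet$ is a resolution of $N$. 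This gives the first assertion $H^i_{\ia}(\Gamma_{\ia}(M))=0$ for $i>0$.

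For the second assertion I would apply the long exact sequence of local cohomology to the short exact sequence $0\to\Gamma_{\ia}(M)\to M\to M/\Gamma_{\ia}(M)\to 0$. This yields, for each $i>0$, an exact piece
\[
H^i_{\ia}(\Gamma_{\ia}(M))\to H^i_{\ia}(M)\to H^i_{\ia}(M/\Gamma_{\ia}(M))\to H^{i+1}_{\ia}(\Gamma_{\ia}(M)).
\]
By the first assertion both outer terms vanish, so the middle map $H^i_{\ia}(M)\to H^i_{\ia}(M/\Gamma_{\ia}(M))$ is an isomorphism for $i>0$; and this map is the canonical one induced by $M\to M/\Gamma_{\ia}(M)$, as desired. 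One should also note in passing that $\Gamma_{\ia}(M/\Gamma_{\ia}(M))$ need not vanish in the non-noetherian setting (compare \ref{2.120}), but this plays no role here since we only claim an isomorphism in positive degrees.

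The only genuine point requiring the ITI hypothesis — and hence the only ``obstacle'' worth flagging — is the very first step: over a general ring, an $\ia$-torsion module need not admit an injective resolution by $\ia$-torsion modules, so the computation $H^i_{\ia}(N)=H^i(\Gamma_{\ia}(J^\bullet))=H^i(J^\bullet)$ would break down. Precisely because \ref{1.10}(ii) is among the equivalent formulations of ITI, this step goes through unchanged, and the rest is the standard homological bookkeeping. Since the excerpt already indicates the proof is ``proven analogously to \cite[2.1.7]{bs} on use of \ref{1.10}'', I would simply record these two steps and refer to \cite{bs} for the routine details rather than reproducing them in full.
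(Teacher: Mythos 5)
Your proposal is correct and is exactly the argument the paper intends: compute $H^i_{\ia}(\Gamma_{\ia}(M))$ via an injective resolution by $\ia$-torsion modules, which exists by \ref{1.10}(ii), and then run the long exact sequence of the sequence $0\to\Gamma_{\ia}(M)\to M\to M/\Gamma_{\ia}(M)\to 0$ in positive degrees. This is precisely the adaptation of \cite[2.1.7]{bs} that the paper describes, and your remark that $\Gamma_{\ia}(M/\Gamma_{\ia}(M))$ need not vanish (but is irrelevant for $i>0$) is an accurate and worthwhile caveat.
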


In case $\ia$ is of finite type, this follows immediately from \ref{wpr20} and the definitions of weak proregularity and of \v{C}ech cohomology.\smallskip

Using the notion of triad sequence we get the important Comparison Sequence.

\begin{prop}\label{3.30}
Let $n\in\Z$, let $b\in R$ and $\ib\dfgl\langle b\rangle_R$, and suppose $R$ has ITI with respect to $\ia$ and with respect to $\ib$. Then, there is an exact sequence of functors $$0\longrightarrow H_{\ib}^1\circ H_{\ia}^{n-1}\longrightarrow H_{\ia+\ib}^n\longrightarrow\Gamma_{\ib}\circ H_{\ia}^n\longrightarrow 0.$$
\end{prop}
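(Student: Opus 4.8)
The plan is to reduce the Comparison Sequence to the triad sequence attached to a suitable short exact sequence of $\delta$-functors, exactly as in the noetherian case treated in \cite[Chapter 8]{bs}, but verifying at each step that the relevant vanishing statements survive when noetherianness is replaced by ITI. First I would recall the triad long exact sequence: for any left-exact functors fitting into the appropriate setup one obtains, from a short exact sequence of functors, a connecting six-term (or longer) exact sequence. Here the functor to exploit is $\Gamma_{\ia+\ib}$, and the key input is the Mayer--Vietoris-type observation that over the principal ideal $\ib=\langle b\rangle_R$ one has $\Gamma_\ib$ with $H_\ib^i=0$ for $i\geq 2$, together with the fact that $H_\ib^1$ is (the cokernel piece of) localisation at $b$. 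The isomorphism of $\delta$-functors $(H_{\ia}^i(\bullet))_{i\in\Z}\cong(\varinjlim_n\ext{i}{R}{R/\ia^n}{\bullet})_{i\in\Z}$ recalled in the excerpt, and its analogue for $\ia+\ib$ and $\ib$, give the algebraic backbone; the composite $H_\ib^1\circ H_\ia^{n-1}$ and $\Gamma_\ib\circ H_\ia^n$ arise as the derived pieces of an appropriate Grothendieck-type spectral sequence $H_\ib^p(H_\ia^q(M))\Rightarrow H_{\ia+\ib}^{p+q}(M)$, which collapses because $H_\ib^p=0$ for $p\geq 2$. The sought exact sequence is precisely the five-term (here three-term, since the tail vanishes) exact sequence of that collapsing spectral sequence.

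Concretely I would proceed as follows. Step 1: establish the spectral sequence $E_2^{p,q}=H_\ib^p\circ H_\ia^q \Rightarrow H_{\ia+\ib}^{p+q}$; in the noetherian setting this is \cite[Chapter 8, in particular the material around the Mayer--Vietoris sequence]{bs}, and the only place noetherianness enters the classical derivation is to guarantee that $\Gamma_\ia$ sends injectives to injectives and that $\Gamma_{\ia+\ib}=\Gamma_\ib\circ\Gamma_\ia$, so I would invoke \ref{1.10} for $\ia$ (and for $\ib$) to get that $\Gamma_\ia(I)$ and $\Gamma_\ib(I)$ are injective whenever $I$ is, which is exactly what makes the Grothendieck composite-functor spectral sequence available. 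The identity $\Gamma_{\ia+\ib}=\Gamma_\ib\circ\Gamma_\ia$ holds for arbitrary ideals since $(\ia+\ib)^n\subseteq\ia^{\lceil n/2\rceil}\cap\ib^{\lceil n/2\rceil}$ and conversely $\ia^n\ib^n\subseteq(\ia+\ib)^{n}$ up to powers, so the two torsion functors agree. Step 2: show $E_2^{p,q}=0$ for $p\geq 2$. Since $\ib$ is principal, $H_\ib^i=0$ for all $i\geq 2$ by the \v{C}ech description of local cohomology with respect to one element (a two-term \v{C}ech complex $R\to R_b$), which is valid over any ring; here I would also use \ref{wpr20} (or rather the trivial fact that a single element is always weakly proregular, \cite[4.22]{yekutieli}) to identify $H_\ib^\bullet$ with that \v{C}ech cohomology, though for a principal ideal this is classical and needs no hypothesis. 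Step 3: read off the three-term exact sequence. A spectral sequence concentrated in columns $p=0,1$ degenerates at $E_2$ into short exact sequences $0\to E_2^{1,n-1}\to H^n\to E_2^{0,n}\to 0$, which is exactly $0\to H_\ib^1\circ H_\ia^{n-1}\to H_{\ia+\ib}^n\to \Gamma_\ib\circ H_\ia^n\to 0$. Step 4: note exactness as a sequence of functors (not just pointwise), which follows because all the arrows are morphisms of functors, being edge maps and differentials of a spectral sequence of $\delta$-functors.

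The main obstacle I anticipate is Step 1: justifying that the composite-functor spectral sequence actually exists in this non-noetherian generality. In \cite{bs} the Mayer--Vietoris / comparison machinery is built assuming a noetherian base, and the construction of the spectral sequence relies on $\Gamma_\ia$ having a right-derived functor computed by $\Gamma_\ia$-acyclic resolutions together with $\Gamma_\ia(\text{injective})$ being $\Gamma_\ib$-acyclic. Both of these are supplied precisely by ITI with respect to $\ia$ and $\ib$ via \ref{1.10}: an injective $R$-module $I$ has $\Gamma_\ia(I)$ injective, hence $\Gamma_\ib$-acyclic, and applying an injective resolution of $M$ and the functor $\Gamma_{\ia+\ib}=\Gamma_\ib\circ\Gamma_\ia$ yields the Grothendieck spectral sequence. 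So the proof is essentially a careful transcription of the classical argument with ``noetherian'' replaced by ``ITI with respect to $\ia$ and $\ib$'' at the one point where it matters; the excerpt already signals this by stating the result ``analogously to'' earlier material and by the promise to ``swap ITI for noetherianness in some basic results on local cohomology''. One should also double-check that the triad/spectral-sequence formalism of \cite{bs} does not secretly use noetherianness elsewhere (e.g. in base-change or finiteness of cohomological dimension), but for the bare existence of the three-term sequence only the acyclicity statements above are needed, and those are exactly what ITI provides.
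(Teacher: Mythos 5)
The paper does not actually write out a proof of \ref{3.30}: Section 3 announces that proofs are omitted, and the only indication given is ``using the notion of triad sequence'', i.e.\ the long exact sequence machinery of \cite{bs} (cf.\ also \cite[3.5]{schenzel}, where the same sequence is obtained from the \v{C}ech/telescope double complex). Your route -- the Grothendieck composite-functor spectral sequence $E_2^{p,q}=H_{\ib}^p\circ H_{\ia}^q\Rightarrow H_{\ia+\ib}^{p+q}$, degenerating because the $E_2$-page is concentrated in the columns $p=0,1$ -- is a legitimate and arguably cleaner alternative packaging of the same homological content. The two essential inputs are correctly identified: $\Gamma_{\ia+\ib}=\Gamma_{\ib}\circ\Gamma_{\ia}$ (true for arbitrary ideals, although your containment should read $(\ia+\ib)^{2n}\subseteq\ia^{n}+\ib^{n}$, with a sum rather than an intersection; the intersection statement is false since $\ia^n\subseteq(\ia+\ib)^n$), and the fact that ITI with respect to $\ia$ makes $\Gamma_{\ia}$ send injectives to injectives, hence to $\Gamma_{\ib}$-acyclics, which is exactly what the composite-functor spectral sequence requires.

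One point, however, must be corrected, because it is precisely where the hypothesis ``ITI with respect to $\ib$'' enters and where non-noetherian pathology lurks. You assert that $H_{\ib}^i=0$ for $i\geq 2$ ``is valid over any ring'' and ``needs no hypothesis'' because $\ib$ is principal, and you paraphrase \cite[4.22]{yekutieli} as saying that a single element is always weakly proregular. Both claims are false: \cite[4.22]{yekutieli} concerns \emph{regular} elements, and an arbitrary element of an arbitrary ring need not be weakly proregular, so the identification $H_{\ib}^i\cong\check{H}^i(b,\bullet)$ -- and with it the vanishing of $H_{\ib}^i$ for $i\geq 2$ -- can fail for a principal ideal. (This is the whole point of the notion of weak proregularity; the two-term \v{C}ech complex always has vanishing cohomology in degrees $\geq 2$, but it need not compute $H_{\ib}^i$.) Your argument survives only because you also invoke \ref{wpr20}: since $R$ has ITI with respect to the finite-type ideal $\ib$, the ideal $\ib$ is weakly proregular, whence $H_{\ib}^i=0$ for $i\geq 2$ and the spectral sequence collapses as claimed. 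So strike the parenthetical ``needs no hypothesis'' remarks and make \ref{wpr20} (applied to $\ib$) the actual justification; with that repair, and the $\cap$ replaced by $+$, the proof is complete.
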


In \cite[3.5]{schenzel}, Schenzel gets the same conclusion under the hypothesis that $\ia$ is of finite type and that $\ia$, $\ib$ \textit{and} $\ia+\ib$ are weakly proregular. The Comparison Sequence allows for an inductive proof of an ITI variant of Hartshorne's Vanishing Theorem (\cite[3.3.3]{bs}), but this also follows immediately from \ref{wpr20}.

\smallskip

The next two results concern the behaviour of local cohomology under change of rings: the Base Ring Independence Theorem and the Flat Base Change Theorem. Both can be proven analogously to \cite[4.2.1; 4.3.2]{bs}, relying on a general variant of the vanishing result \cite[4.1.3]{bs}. The latter can be obtained in the noetherian case on use of the Mayer-Vietoris Sequence, and in general on use of the Comparison Sequence.

\begin{prop}\label{3.50}
Let $R\rightarrow S$ be a morphism of rings. Suppose $\ia$ is of finite type and $R$ has ITI with respect to ideals of finite type. Then, there is a canonical isomorphism of $\delta$-functors $$(H^i_{\ia S}(\bullet)\res_R)_{i\in\Z}\cong(H^i_{\ia}(\bullet\res_R))_{i\in\Z}.$$
\end{prop}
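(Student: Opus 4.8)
The plan is to imitate the proof of the Base Ring Independence Theorem \cite[4.2.1]{bs}. Both $(H^i_{\ia S}(\bullet)\res_R)_{i\in\Z}$ and $(H^i_{\ia}(\bullet\res_R))_{i\in\Z}$ are $\delta$-functors from the category of $S$-modules to that of $R$-modules, and in degree $0$ they coincide via a canonical identification that needs no hypotheses: since $(\ia S)^n=\ia^nS$ one has $(0:_M(\ia S)^n)=(0:_M\ia^n)$ for every $S$-module $M$, hence $\Gamma_{\ia S}(M)\res_R=\Gamma_{\ia}(M\res_R)$. To propagate this to higher degrees I would use $\Gamma_{\ia}$-acyclic resolutions: given an $S$-module $M$, choose an injective resolution $M\rightarrow I^\bullet$ over $S$; as scalar restriction is exact, $M\res_R\rightarrow I^\bullet\res_R$ is a resolution over $R$, and if each $I^j\res_R$ were $\Gamma_{\ia}$-acyclic over $R$ then $$H^i_{\ia}(M\res_R)=H^i(\Gamma_{\ia}(I^\bullet\res_R))=H^i(\Gamma_{\ia S}(I^\bullet)\res_R)=H^i(\Gamma_{\ia S}(I^\bullet))\res_R=H^i_{\ia S}(M)\res_R,$$ naturally in $M$, the compatibility with connecting morphisms being the usual comparison-of-resolutions argument.

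Thus the whole statement reduces to a change-of-rings vanishing lemma, playing the role of \cite[4.1.3]{bs}: \emph{under the hypotheses, for every injective $S$-module $J$ the $R$-module $J\res_R$ is $\Gamma_{\ia}$-acyclic.} I would prove this by induction on the number $n$ of generators of $\ia$. The case $n=0$ is trivial ($\Gamma_{\ia}=\Id$). For $n\geq 2$ write $\ia=\ia'+\ib$ with $\ia'=\langle a_1,\dots,a_{n-1}\rangle_R$ and $\ib=\langle a_n\rangle_R$; since $R$ has ITI with respect to the finite-type ideals $\ia'$, $\ib$ and $\ia$, the Comparison Sequence \ref{3.30} applies over $R$ and gives, for each $m\in\Z$, an exact sequence $$0\longrightarrow H^1_{\ib}(H^{m-1}_{\ia'}(J\res_R))\longrightarrow H^m_{\ia}(J\res_R)\longrightarrow\Gamma_{\ib}(H^m_{\ia'}(J\res_R))\longrightarrow 0.$$ By the inductive hypothesis $H^j_{\ia'}(J\res_R)=0$ for $j>0$, so $H^m_{\ia}(J\res_R)=0$ for $m\geq 2$ and $H^1_{\ia}(J\res_R)\cong H^1_{\ib}(\Gamma_{\ia'}(J\res_R))=H^1_{\ib}(\Gamma_{\ia'S}(J)\res_R)$; for the latter I would write $\Gamma_{\ia'S}(J)=\bigcup_{k\in\N}(0:_J(\ia'S)^k)$, observe that $(0:_J(\ia'S)^k)=\hm{S}{S/(\ia'S)^k}{J}$ is injective over $S/(\ia'S)^k$ by \cite[10.1.16]{bs}, hence $\Gamma_{\ib}$-acyclic over $R$ after restriction by the single-generator case applied to $R\rightarrow S/(\ia'S)^k$, and finish by the compatibility of local cohomology with respect to a finite-type ideal with filtered colimits.

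The main obstacle is the single-generator case $\ia=\langle a\rangle_R$, i.e. showing $H^1_{\langle a\rangle}(J\res_R)=0$ for every injective $S$-module $J$, equivalently that $J\res_R\rightarrow(J\res_R)_a$ is onto. This is precisely the ITI-variant of \cite[4.1.3]{bs}, and it is where the hypotheses really enter: one uses that $\langle a\rangle$ is weakly proregular in $R$ (by \ref{wpr20}, since $R$ has ITI with respect to $\langle a\rangle$) to reduce, via the \v{C}ech description over $R$ and its base-ring independence (localization commutes with scalar restriction), to a statement about the $a$-power torsion of $J$ over $S$; the delicate issue is that weak proregularity of $\langle a\rangle$ over $R$ does not by itself transfer to $\langle a\rangle S$ over $S$, so this last step has to be handled with care, drawing again on the injectivity of $\hm{S}{S/\langle a\rangle^kS}{J}$ over $S/\langle a\rangle^kS$. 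Once the single-generator case is in hand, everything above is routine. (If one is willing to assume that $S$, too, has ITI with respect to ideals of finite type — so that $\ia S$ is weakly proregular in $S$ by \ref{wpr20} — the theorem is immediate, both sides being computed by \v{C}ech cohomology, which is manifestly base-ring independent.)
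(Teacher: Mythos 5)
Your overall strategy is exactly the one the paper has in mind: it omits the proof, saying only that \ref{3.50} ``can be proven analogously to \cite[4.2.1]{bs}, relying on a general variant of the vanishing result \cite[4.1.3]{bs}'', the latter being obtained ``on use of the Comparison Sequence''. Your reduction to the vanishing lemma and your inductive step via \ref{3.30} are correct. The problem is precisely the single-generator case, which you rightly single out as the crux but do not prove --- and which, as far as I can see, cannot be proved from the stated hypotheses. Write $b$ for the image of $a$ in $S$. Since $R$ has ITI with respect to $\langle a\rangle_R$, this ideal is weakly proregular in $R$ by \ref{wpr20}, so for every injective $S$-module $J$ one has $H^1_{\langle a\rangle_R}(J\res_R)\cong\check{H}^1(a,J\res_R)=\mathrm{coker}(J\to J_b)=\check{H}^1(b,J)$. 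Thus the required vanishing for all injective $S$-modules is \emph{equivalent} to effaceability of $\check{H}^1(b,\bullet)$ on $S$-modules, hence (by the very argument used in the proof of \ref{wpr20}) to weak proregularity of $b$ in $S$, i.e.\ to boundedness of the $b$-torsion of $S$. This is a condition on $S$ which no hypothesis on $R$ can enforce; your worry that weak proregularity ``does not transfer'' is not a delicate point to be handled with care, it is a genuine obstruction.

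Concretely, take $R=K[X]\to S\dfgl K[X,(Y_i)_{i\geq 1}]/\langle X^iY_i\mid i\geq 1\rangle$ and $\ia\dfgl\langle X\rangle_R$; here $R$ is noetherian, so it has ITI with respect to every ideal, and $\ia$ is principal. The class of $Y_n$ lies in $(0:_SX^n)\setminus(0:_SX^{n-1})$, so the $X$-torsion of $S$ is unbounded and $X$ is not weakly proregular in $S$; consequently there is an injective $S$-module $J$ with $J\to J_X$ not surjective, whence $H^1_{\ia}(J\res_R)\cong J_X/J\neq 0$ while $H^1_{\ia S}(J)=0$. So not only does your base case fail: the statement needs an additional hypothesis guaranteeing weak proregularity of $\ia S$ in $S$. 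Your closing parenthetical remark is therefore the right fix --- assume in addition that $S$ has ITI with respect to $\ia S$ (or directly that $\ia S$ is weakly proregular, as in \cite[6.5]{yekutieli}); under that hypothesis both sides are computed by the \v{C}ech cocomplex, which is visibly insensitive to the base ring, and your argument closes immediately.
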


\begin{prop}\label{3.60}
Let $R\rightarrow S$ be a flat morphism of rings. Suppose $\ia$ is coherent and $S$ has ITI with respect to extensions of coherent ideals of $R$.\footnote{The hypothesis on $S$ is fulfilled if it has ITI with respect to ideals of finite type.} Then, there is a canonical isomorphism of $\delta$-functors $$(H^i_{\ia}(\bullet)\otimes_RS)_{i\in\Z}\cong(H^i_{\ia S}(\bullet\otimes_RS))_{i\in\Z}.$$
\end{prop}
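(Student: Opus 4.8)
The plan is to mimic the proof of \cite[4.3.2]{bs}, replacing the structure theory of injective modules over a noetherian ring by the ITI machinery of the preceding sections. Fix a generating family $\mathbf{a}=(a_1,\dots,a_n)$ of $\ia$ (coherent ideals being of finite type). I would first record the degree-$0$ statement: there is a canonical isomorphism $\Gamma_{\ia}(\bullet)\otimes_RS\cong\Gamma_{\ia S}(\bullet\otimes_RS)$, arising from $\Gamma_{\ia}(\bullet)\cong\ilim_m\hm{R}{R/\ia^m}{\bullet}$, from $R/\ia^m$ being of finite presentation (as $\ia$ is of finite type), from the compatibility of $\hm{R}{P}{\bullet}$ with flat base change for $P$ of finite presentation, and from exactness of $\bullet\otimes_RS$. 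Both $(H^i_{\ia}(\bullet)\otimes_RS)_{i\in\Z}$ and $(H^i_{\ia S}(\bullet\otimes_RS))_{i\in\Z}$ are then $\delta$-functors from $R$-modules to $S$-modules, they agree in degree $0$, and the first is effaceable --- hence universal --- because $H^i_{\ia}$ annihilates injective $R$-modules for $i>0$. So the whole statement reduces to the following acyclicity claim: \emph{if $E$ is an injective $R$-module, then $H^i_{\ia S}(E\otimes_RS)=0$ for every $i>0$.} Indeed, granting it, for an $R$-module $M$ one takes an injective resolution $M\rightarrowtail I^{\bullet}$ over $R$; then $I^{\bullet}\otimes_RS$ is a $\Gamma_{\ia S}$-acyclic resolution of $M\otimes_RS$ (a resolution by flatness, acyclic by the claim), so applying $\Gamma_{\ia S}$, invoking the degree-$0$ isomorphism termwise and flatness of $S$, one obtains natural isomorphisms $H^i_{\ia S}(M\otimes_RS)\cong H^i(\Gamma_{\ia}(I^{\bullet}))\otimes_RS=H^i_{\ia}(M)\otimes_RS$, which a routine check of compatibility with the connecting morphisms promotes to an isomorphism of $\delta$-functors.

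For the acyclicity claim I would induct on $n$. Note that $\ia S$ is an extension of a coherent ideal, so $S$ has ITI with respect to $\ia S$, whence $\ia S$ is weakly proregular by \ref{wpr20} and $H^{\bullet}_{\ia S}(\bullet)$ is computed by the \v{C}ech cocomplex on the image of $\mathbf{a}$. The case $n=0$ is trivial. In the base case $n=1$, writing $\ib=\langle a_1\rangle_R$: $S$ has ITI with respect to $\ib S$, so $H^i_{\ib S}(\bullet)=0$ for $i\geq2$, while \ref{3.10} together with the degree-$0$ isomorphism gives $H^1_{\ib S}(E\otimes_RS)\cong H^1_{\ib S}\big((E/\Gamma_{\ib}(E))\otimes_RS\big)$; since $a_1$ is a non-zerodivisor on the $\ib$-torsion-free module $E/\Gamma_{\ib}(E)$, and hence on its flat base change, this is the cokernel of $(E/\Gamma_{\ib}(E))\otimes_RS\to(E/\Gamma_{\ib}(E))[1/a_1]\otimes_RS$, which must be shown to vanish. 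For the inductive step, set $\ia'=\langle a_1,\dots,a_{n-1}\rangle_R$ and $\ib=\langle a_n\rangle_R$; being finitely generated submodules of the coherent module $\ia$, both are coherent, so $S$ has ITI with respect to $\ia'S$ and to the principal ideal $\ib S$, and the Comparison Sequence \ref{3.30} applies over $S$ with $\ia S=\ia'S+\ib S$. For $k\geq2$ the outer terms $H^1_{\ib S}(H^{k-1}_{\ia'S}(E\otimes_RS))$ and $\Gamma_{\ib S}(H^k_{\ia'S}(E\otimes_RS))$ vanish by the inductive hypothesis applied to the coherent ideal $\ia'$, giving $H^k_{\ia S}(E\otimes_RS)=0$; for $k=1$ the sequence collapses to $H^1_{\ia S}(E\otimes_RS)\cong H^1_{\ib S}\big(\Gamma_{\ia'}(E)\otimes_RS\big)$, which again must be shown to vanish by (a variant of) the base-case computation, now applied to the module $\Gamma_{\ia'}(E)$.

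The main obstacle is precisely this recurring vanishing of first \v{C}ech cohomology after flat base change: that $(E/\Gamma_{\ib}(E))[1/a_1]\otimes_RS$, respectively $H^1_{\ib S}(\Gamma_{\ia'}(E)\otimes_RS)$, is spanned by the image of the canonical map. This is the one non-formal ingredient, and it is where the injectivity of $E$ over $R$, the flatness of $S$, and the ITI hypothesis on $S$ (via weak proregularity of the extended principal ideals, \ref{wpr20}, and via \ref{3.10}) must be used together; in the noetherian situation it is exactly the content of \cite[4.1.3]{bs}, obtained there from the Mayer--Vietoris Sequence and the decomposition of injectives into indecomposables. Everything else --- the reduction of the first paragraph, naturality of the isomorphism, and its compatibility with the two $\delta$-structures --- is then bookkeeping, parallel to \cite[4.2.1; 4.3.2]{bs}.
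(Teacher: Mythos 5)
Your overall architecture is exactly the one the paper has in mind: establish the degree-zero isomorphism $\Gamma_{\ia}(\bullet)\otimes_RS\cong\Gamma_{\ia S}(\bullet\otimes_RS)$ from finite presentation of $R/\ia^m$ and flatness, reduce the whole statement to the acyclicity claim that $H^i_{\ia S}(E\otimes_RS)=0$ for $E$ injective over $R$ and $i>0$ (this is precisely the ``general variant of \cite[4.1.3]{bs}'' the authors invoke), and prove that claim by induction on the number of generators of $\ia$, with the inductive step handled by the Comparison Sequence \ref{3.30}. All of that is correct and matches the intended route.

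The problem is that you have not proved the statement on which everything rests, namely the principal case: the vanishing of $H^1_{\langle a\rangle_RS}(E\otimes_RS)$, i.e.\ of the cokernel of $E\otimes_RS\rightarrow(E\otimes_RS)_a\cong E_a\otimes_RS$. You flag this as ``the one non-formal ingredient'' and list the hypotheses that must enter, but you give no argument, and this step is genuinely not bookkeeping. Observe that $\mathrm{coker}(E\rightarrow E_a)$ is in general nonzero for injective $E$: if it always vanished, then $\check{H}^1(a,\bullet)$ would be effaceable over every ring and every principal ideal of every ring would be weakly proregular, which is false. So the vanishing of $\mathrm{coker}(E\rightarrow E_a)\otimes_RS$ must exploit the flatness of $S$ together with the weak proregularity of $\langle a\rangle_RS$ in $S$ (obtained from ITI via \ref{wpr20}) in an essential, non-formal way; supplying that argument is the actual content of the proposition, and without it the proof is incomplete. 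A secondary gap of the same kind: at the step $k=1$ of your induction the module that appears is $\Gamma_{\ia'}(E)\otimes_RS$, and $\Gamma_{\ia'}(E)$ is not known to be injective over $R$ --- there is no ITI hypothesis on $R$ at all --- so ``the base-case computation applied to $\Gamma_{\ia'}(E)$'' is not available as stated. You would need either to strengthen the inductive statement to a class of modules closed under $\Gamma_{\ia'}$, or to rearrange the argument, e.g.\ by proving $\check{H}^i({\bf a},E)\otimes_RS=0$ directly, using $\check{H}^i({\bf a}S,E\otimes_RS)\cong\check{H}^i({\bf a},E)\otimes_RS$ and weak proregularity of $\ia S$.
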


Under the hypothesis that $\ia$ and $\ia S$ are weakly proregular, the conclusion of the Base Ring Independence Theorem is shown to hold in \cite[6.5]{yekutieli}, while the conclusion of the Flat Base Change Theorem follows immediately from the definitions of weak proregularity and \v{C}ech cohomology.

\smallskip

Up to now, we saw that several basic results on local cohomology can be generalised by replacing noetherianness with ITI properties (and maybe some coherence hypotheses). But alas!, this does not work for Grothendieck's Vanishing Theorem (\cite[6.1.2]{bs}). We end this article with a positive and a negative result in this direction for absolutely flat rings. Keep in mind that absolutely flat rings have ITI with respect to every ideal by \ref{1.120} (hence their ideals of finite type are weakly proregular by \ref{wpr20}) and that they are moreover coherent.

\begin{prop}\label{3.80}
Let $R$ be an absolutely flat ring.

a) If $\ia\subseteq R$ is an ideal of finite type, then $H^i_{\ia}=0$ for every $i>0$.

b) If $R$ is non-noetherian, then there exist an ideal $\ia\subseteq R$ not of finite type, an $R$-module $M$ and $i>\dim(M)$ such that $H^i_{\ia}(M)\neq 0$.
\end{prop}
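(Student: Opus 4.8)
For part a), the plan is to exploit the structure of ideals of finite type in an absolutely flat ring: by \cite[4.23]{lam1} such an ideal $\ia$ is generated by a single idempotent $e$. Writing $R\cong Re\times R(1-e)$ as a product of rings, one checks that $\Gamma_{\ia}$ coincides with the functor ``project onto the $Re$-factor'', which is exact. Since the $\ia$-torsion functor is exact, its higher right derived functors $H^i_{\ia}$ vanish for $i>0$. Alternatively, since $e$ is proregular and $\ia=\langle e\rangle_R$ is weakly proregular by \cite[2.7]{schenzel}, one computes local cohomology via the \v{C}ech cocomplex of the single element $e$, which (because $R_e=Re$ and the localisation map $R\to R_e$ is surjective) has zero cohomology in positive degrees; this gives $H^i_{\ia}=0$ for $i>0$ at once. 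I would use the first, more self-contained argument. This part is routine and presents no obstacle.

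For part b), the idea is to use non-noetherianness to produce an ideal $\ia$ not of finite type over which local cohomology is badly behaved. Since $R$ is absolutely flat and non-noetherian, it has infinitely many idempotents generating a strictly increasing chain; equivalently, $\spec(R)$ is an infinite Boolean space. I would take $R=K^{\N}$ (or a suitable quotient/subring realising the general case, reducing to this model) and $\ia$ the ideal of elements of finite support, as in Example \ref{1.200} A). The point established there is that $\ia$ is idempotent (being a union of idempotent-generated ideals) but not of finite type. The key computation is then to show $\Gamma_{\ia}$ is \emph{not} exact — indeed $\Gamma_{\ia}(R)=\ia$ while $\Gamma_{\ia}$ applied to a well-chosen surjection $R\twoheadrightarrow R/\ib$ fails to be surjective — so that $H^1_{\ia}\neq 0$ on some module $M$. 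Since $\dim(R)\leq 0$ and $R$ is absolutely flat, every $R$-module $M$ has $\dim(M)\leq 0$, so any nonvanishing of $H^i_{\ia}(M)$ with $i\geq 1$ already gives $i>\dim(M)$; thus it suffices to exhibit one module $M$ with $H^1_{\ia}(M)\neq 0$.

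Concretely, I would argue: $\ia$ is idempotent, so $\ia/\ia^2=0$ and the long exact sequence of $H^{\bullet}_{\ia}$ attached to $0\to\ia\to R\to R/\ia\to 0$ gives a connecting map $\Gamma_{\ia}(R/\ia)=R/\ia\to H^1_{\ia}(\ia)$. One then identifies $H^1_{\ia}(\ia)$, or more simply applies $\Gamma_{\ia}$ directly to the sequence $0\to\ia\to R\to R/\ia\to 0$: since $\Gamma_{\ia}(\ia)=\ia$, $\Gamma_{\ia}(R)=\ia$, and $\Gamma_{\ia}(R/\ia)=R/\ia\neq 0$, the sequence $0\to\ia\to\ia\to R/\ia$ is exact with the first map an isomorphism, forcing the map $\Gamma_{\ia}(R)\to\Gamma_{\ia}(R/\ia)$ to be zero rather than surjective; hence $H^1_{\ia}(\ia)\neq 0$ (it surjects onto the nonzero cokernel $R/\ia$). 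Taking $M\dfgl\ia$ and $i\dfgl 1>0\geq\dim(M)$ completes part b). The main obstacle is the reduction from a general non-noetherian absolutely flat ring to a concrete model: one must argue that non-noetherianness yields an idempotent-generated strictly increasing chain whose union is an idempotent ideal not of finite type, and that this ideal genuinely has nonvanishing $H^1$ — this last point needs the observation that $\Gamma_{\ia}$ is not right exact, which is exactly what the computation above supplies, with no noetherian crutches available.
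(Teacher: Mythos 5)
Part a) of your proposal is correct and is essentially the paper's argument in different clothing: the paper notes that $\ia$ idempotent gives $\Gamma_{\ia}\cong\hm{R}{R/\ia}{\bullet}$, so $H^i_{\ia}=0$ for $i>0$ iff $R/\ia$ is projective, and then uses that $R/\ia$ is flat and of finite presentation; your decomposition $R\cong Re\times R(1-e)$ says the same thing. (One small slip: since $\Gamma_{\ia}(M)=(0:_Me)=(1-e)M$, the torsion functor is projection onto the $R(1-e)$-factor, not the $Re$-factor; exactness holds either way.)

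Part b) contains a genuine error: for $R=K^{\N}$ and $\ia$ the ideal of elements of finite support, your claims $\Gamma_{\ia}(R)=\ia$ and $\Gamma_{\ia}(\ia)=\ia$ are false. Since $\ia$ is idempotent, $\Gamma_{\ia}(R)=(0:_R\ia)$, and any nonzero $x\in R$ is not annihilated by the coordinate idempotents lying in $\ia$; hence $\Gamma_{\ia}(R)=0$ and likewise $\Gamma_{\ia}(\ia)=0$. You appear to be conflating ``$\ia$-torsion'' with ``lying in $\ia$''. Your final conclusion nevertheless survives the correction: the long exact sequence of $0\to\ia\to R\to R/\ia\to 0$ gives an injective connecting map $R/\ia=\Gamma_{\ia}(R/\ia)\to H^1_{\ia}(\ia)$ because $\Gamma_{\ia}(R)=0$, so $H^1_{\ia}(\ia)\neq 0$. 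The second weak point is the reduction of a general non-noetherian absolutely flat ring to the model $K^{\N}$, which you acknowledge but do not carry out; it is in fact unnecessary. Every ideal of an absolutely flat ring is idempotent, so for \emph{any} ideal $\ia$ not of finite type (which exists by non-noetherianness) one has $H^i_{\ia}\cong\ext{i}{R}{R/\ia}{\bullet}$, and $\ext{1}{R}{R/\ia}{\ia}=0$ would split $R\twoheadrightarrow R/\ia$, making $\ia$ a direct summand of $R$ and hence generated by an idempotent -- contradicting that $\ia$ is not of finite type. Thus $M=\ia$ and $i=1$ work in general. The paper instead argues by contradiction: if $H^i_{\ia}=0$ for all $i>0$ and all $\ia$, then every monogeneous module is projective, forcing $R$ to be semisimple and hence noetherian. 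Your (repaired) route is more explicit in that it exhibits the witnessing module; the paper's is shorter but relies on the classification of rings whose cyclic modules are projective.
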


\begin{proof}
An ideal $\ia\subseteq R$ is idempotent, hence $\Gamma_{\ia}(\bullet)\cong\hm{R}{R/\ia}{\bullet}$, and thus $H^i_{\ia}=0$ if and only if the $R$-module $R/\ia$ is projective. In case $\ia$ is of finite type, the $R$-module $R/\ia$ is flat and of finite presentation, hence projective. This shows a).

Assume now $R$ is non-noetherian and $H^i_{\ia}=0$ for every $i>0$. By the above, this implies projectivity of every monogeneous $R$-module, thus by \cite[VIII.8.2 Proposition 4]{a} the contradiction that $R$ is semisimple (and in particular noetherian). This shows b).
\end{proof}


\medskip
\noindent{\bf Acknowledgement:} For their various help during the writing of this article we thank Markus Brodmann, Paul-Jean Cahen, Neil Epstein, Shiro Goto, John Greenlees, Thomas Preu, Rodney Sharp, and Kazuma Shimomoto. We are grateful to the referee for his careful reading.



\begin{thebibliography}{99}
\bibitem{lipman} L. Alonso Tarr\'io, A. Jerem\'ias L\'opez, J. Lipman, {\it Local homology and cohomology on schemes.} Ann. Sci. \'Ecole Norm. Sup. (4) 30 (1997), 1--39. Corrections available at {\tt http://www.math.purdue.edu/\~{}lipman/papers/homologyfix.pdf}
\bibitem{a} N.\,Bourbaki, {\it \'El\'ements de math\'ematique. Alg\`ebre. Chapitre 8.} Springer, 2012; {\it Chapitre 10.} Masson, 1980.
\bibitem{ac} N.\,Bourbaki, {\it \'El\'ements de math\'ematique. Alg\`ebre commutative. Chapitres 1 \`a 4.} Masson, 1985; {\it Chapitres 5 \`a 7.} Herman, 1975.
\bibitem{bs} M.\,P.\,Brodmann, R.\,Y.\,Sharp, {\it Local cohomology (second edition).} Cambridge Stud.\,Adv.\,Math.\,139. Cambridge Univ.\,Press, 2013.
\bibitem{gilmer} R.\,Gilmer, {\it Commutative semigroup rings.} Chicago Lectures in Math. Univ. Chicago Press, 1984.
\bibitem{glaz} S.\,Glaz, {\it Commutative coherent rings.} Lecture Notes in Math.\,1371. Springer, 1989.
\bibitem{sga2} A.\,Grothendieck, {\it Cohomologie locale des faisceaux coh\'erents et th\'eor\`emes de Lefschetz locaux et globaux (SGA 2).} S\'eminaire de g\'eom\'etrie alg\'ebrique du Bois Marie 1962. North-Holland, Amsterdam, 1968.
\bibitem{ega} A.\,Grothendieck, J.\,A.\,Dieudonn\'e, {\it \'El\'ements de g\'eom\'etrie alg\'e\-brique. I: Le langage des sch\'emas (seconde \'edition).} Grundlehren Math.\,Wiss.\,166. Springer, 1971.
\bibitem{heinzer-ohm} W.\,Heinzer, J.\,Ohm, {\it Locally noetherian commutative rings.} Trans.\,Amer.\,Math.\,Soc.\,158 (1971), 273--284.
\bibitem{hirano} Y.\,Hirano, {\it On rings whose simple modules are flat.} Canad.\,Math.\,Bull.\,37 (1994), 361--364.
\bibitem{hutchins} H.\,C.\,Hutchins, {\it Examples of commutative rings.} Polygonal Publ. House, 1981.
\bibitem{irozrush} J.\,Iroz, D.\,E.\,Rush, {\it Associated prime ideals in non-noetherian rings.} Canad.\,J.\,Math.\,36 (1984), 344--360.
\bibitem{lam1} T.\,Y.\,Lam, {\it A first course in noncommutative rings (second edition).} Grad.\,Texts in Math.\,131. Springer, 2001.
\bibitem{lam2} T.\,Y.\,Lam, {\it Lectures on modules and rings.} Grad.\,Texts in Math.\,189. Springer, 1999.
\bibitem{lazard} D.\,Lazard, {\it Autour de la platitude.} Bull.\,Soc.\,Math.\,France 97 (1969), 81--128.
\bibitem{mat} H.\,Matsumura, {\it Commutative ring theory.} Translated from the Japanese. Cambridge Stud.\,Adv.\,Math.\,8. Cambridge Univ.\,Press, 1986.
\bibitem{yekutieli} M.\,Porta, L.\,Shaul, A.\,Yekutieli, {\it On the homology of completion and torsion.} Algebr. Represent. Theory 17 (2014), 31--67.
\bibitem{r-torsion} F.\,Rohrer, {\it Torsion functors with monomial support.} Acta Math.\,Vietnam.\,38 (2013), 293--301. 
\bibitem{r-sheaves} F.\,Rohrer, {\it Quasicoherent sheaves on toric schemes.} Expo.\,Math.\,32 (2014), 33--78.
\bibitem{schenzel} P.\,Schenzel, {\it Proregular sequences, local cohomology, and completion.} Math.\,Scand.\,92 (2003), 161--180.
\bibitem{yassemi} S.\,Yassemi, {\it Coassociated primes of modules over a commutative ring.} Math.\,Scand.\,80 (1997), 175--187.
\bibitem{yassemi2} S.\,Yassemi, {\it Weakly associated primes under change of rings.} Comm.\,Algebra 26 (1998), 2007--2018.
\end{thebibliography}
\end{document}